\documentclass{amsart}

\usepackage{mathptmx}
\usepackage{helvet}
\usepackage{courier}
\usepackage{graphicx}
\usepackage{multicol}
\usepackage{footmisc}
\usepackage[T1]{fontenc}
\usepackage{euscript}
\usepackage{eufrak}
\usepackage{epic}

\usepackage{amsfonts,amsthm,amsmath}
\usepackage{amssymb}
\usepackage{amscd}
\usepackage{enumerate}
\usepackage[usenames,dvipsnames]{color}
\usepackage{tikz}
\usepackage{soul}
\usepackage{array}
\usepackage{array,tabularx}

\oddsidemargin 0pt \evensidemargin 0pt \marginparsep 10pt
\topmargin 0pt \baselineskip 14pt \textwidth 6in \textheight 9in
\linespread{1.2}




\numberwithin{equation}{section}
\numberwithin{equation}{subsection}

\theoremstyle{plain}

\newtheorem{theorem}[equation]{Theorem}
\newtheorem{lemma}[equation]{Lemma}
\newtheorem{proposition}[equation]{Proposition}

\newtheorem{corollary}[equation]{Corollary}

\theoremstyle{definition}
\newtheorem{example}[equation]{Example}
\newtheorem{remark}[equation]{Remark}

\newtheorem{definition}[equation]{Definition}




\newcommand{\chic}{\mathfrak{r}}

\newcommand{\bZ}{{\mathbb Z}}

\newcommand{\bP}{{\mathbb P}}

\newcommand{\calC}{{\mathcal C}}
\newcommand{\cJ}{{\mathcal J}}

\newcommand{\cV}{{\mathcal V}}

\newcommand{\calS}{{\mathcal S}}

\newcommand{\cI}{{\mathcal I}}

\newcommand{\cO}{{\mathcal O}}\newcommand{\calO}{{\mathcal O}}
\newcommand{\cF}{{\mathcal F}}

\newcommand{\calL}{{\mathcal L}}

\newcommand{\calQ}{{\mathcal Q}}

\newcommand{\tX}{\widetilde{X}}


\newcommand{\C}{{\calc}}


\newcommand{\rank}{{\rm rank}\, }

\def\blfootnote{\xdef\@thefnmark{}\@footnotetext}


\newcommand{\bt}{{\bf t}}

\newcommand{\bH}{{\mathbb H}}

\newcommand{\cali}{{\mathcal I}}

\newcommand{\calF}{{\mathcal F}}



\newcommand{\hh}{\mathfrak{h}}
\newcommand{\pp}{\mathfrak{p}}
\newcommand{\RR}{\mathfrak{R}}

\newcommand{\calv}{\mathcal{V}}

\newcommand{\Z}{\mathbb{Z}}
\newcommand{\Q}{\mathbb{Q}}
\newcommand{\R}{\mathbb{R}}




\begingroup\makeatletter\ifx\SetFigFontNFSS\undefined%
\gdef\SetFigFontNFSS#1#2#3#4#5{%
  \reset@font\fontsize{#1}{#2pt}%
  \fontfamily{#3}\fontseries{#4}\fontshape{#5}%
  \selectfont}%
\fi\endgroup%



\newcommand{\tx}{\tilde{X}}

\newcommand{\calt}{{\mathcal T}}

\def\C{\mathbb C}
\def\Q{\mathbb Q}
\def\R{\mathbb R}
\def\bH{\mathbb H}

\def\Z{\mathbb Z}

\newcommand{\cale}{{\mathcal E}}








\author{Tam\'as \'Agoston}
\address{Alfr\'ed R\'enyi Institute of Mathematics,
Hungarian Academy of Sciences,
Re\'altanoda utca 13-15, H-1053, Budapest, Hungary
\newline
 \hspace*{4mm} ELTE - University of Budapest, Dept. of Geometry, Budapest, Hungary
 }
\email{agoston.tamas@renyi.hu}

\author{Andr\'as N\'emethi}
\address{Alfr\'ed R\'enyi Institute of Mathematics,
Hungarian Academy of Sciences,
Re\'altanoda utca 13-15, H-1053, Budapest, Hungary \newline
 \hspace*{4mm} ELTE - University of Budapest, Dept. of Geometry, Budapest, Hungary \newline \hspace*{4mm}
BCAM - Basque Center for Applied Math.,
Mazarredo, 14 E48009 Bilbao, Basque Country – Spain}
\email{nemethi.andras@renyi.hu }

\title{The analytic lattice cohomology of isolated  singularities
}

\begin{document}

\keywords{isolated singularity,
resolution, rational singularity, divisorial filtration, Hilbert series,
Serre duality, lattice cohomology,  graded roots, Heegaard Floer theory}
\subjclass[2010]{Primary. 32S05, 32S25, 32S50,
Secondary. 14Bxx, 14J80}
\thanks{The  authors are partially supported by NKFIH Grant ``\'Elvonal (Frontier)'' KKP 126683.}

\begin{abstract}
We associate (under a minor assumption)
to any analytic isolated singularity of dimension $n\geq 2$ the `analytic
lattice cohomology' $\bH^*_{an}=\oplus_{q\geq 0}\bH^q_{an}$. Each $\bH^q_{an}$
is  a graded ${\mathbb Z}[U]$--module. It   is the extension to higher dimension
of the
`analytic lattice cohomology' defined for a normal surface singularity with a rational homology sphere link. This latest one is the analytic analogue of the `topological lattice cohomology' of the link of the  normal surface singularity, which conjecturally is isomorphic to the Heegaard Floer cohomology of the link.

The definition uses a good resolution $\tX$ of the singularity $(X,o)$. Then we prove the independence of the choice of the  resolution, and  we show that
the Euler characteristic of $\bH^*_{an}$   is $h^{n-1}(\calO_{\tX})$.
In the case of a hypersurface weighted homogeneous singularity we relate it to the Hodge
spectral numbers of the first interval.
\end{abstract}

\maketitle


\date{}

\pagestyle{myheadings} \markboth{{\normalsize  T. \'Agoston , A. N\'emethi}} {{\normalsize Analytic lattice cohomology }}


\section{Introduction}\label{s:intr}

\subsection{} In the classification of singular germs  one can
proceed in many different directions.
The first level is the topological classification of the singularity links
using topological (smooth) invariants. Then one continues with the much harder
analytic classification with the help of different analytic invariants.
In this process one usually   uses sheaf cohomologies associated with different analytic sheaves.
 However, if we wish to keep certain deep  interference with recent developments in topology  then we might naturally  ask:

{\em what are the analytic analogs of the celebrated  cohomology theories produced by the low-dimensional \hspace*{3mm} topology in the last decades (e.g. of the Heegaard Floer theory)?}

The Heegaard Floer theory,
defined by Ozsv\'ath and Szab\'o,
 associates to any oriented compact 3-manifold a graded $\Z[U]$--module, see e.g.
 \cite{OSz,OSz7,OSzP}.
Its Euler characteristic is the Seiberg--Witten invariant of the link.
It is equivalent with several other cohomology theories:  the
{\em  Monopole Floer Homology} of Kronheimer and Mrowka, the
 {\em Seiberg--Witten version of Floer homology} presented by Marcolli and Wang, or  Hutchings'
 {\em Embedded  Contact Homology}.
They produce extremely strong  results in low dimensional topology.
Our task is to develop  an  {\it analytic analogue}.

\bekezdes\label{1.1.1}
 The first bridge between the Heegaard Floer theory
  and the analytic theory of singularities is
  realized by the {\it topological lattice cohomology}  $\bH^*_{top}=\oplus_{q\geq 0}\bH^q_{top}$
   introduced by the second author  in \cite{Nlattice}. It is associated with the link of a normal surface singularity (a special
  plumbed 3--manifold), whenever the link is a rational homology sphere.
  Each $\bH^q_{top}$ is a  graded $\Z[U]$--module.
An improvement of $\bH^0_{top}$ is a {\it
graded root}, a special tree with $\bZ$--graded vertices (where the edges correspond to the $U$--action).
 They were  defined using a good resolution.
 Some of their   key properties are the following:

(a)  $\bH^*_{top}$ is independent of the choice of the resolution, it depends only on the link $M$,

(b) $\bH^*_{top}(M)=
 \oplus_{\sigma\in{\rm Spin}^c(M)}\bH^*_{top}(M,\sigma)$,

(c) the Euler characteristic is the (normalized) Seiberg--Witten invariant indexed by
 ${\rm Spin}^c(M)$, 

(d)  $\bH^*_{top}(M)$
  satisfies  several exact sequences (analogues of the exact triangles of
 $HF^+$) \cite{Greene,Nexseq}.

(e) Conjecturally $\bH^*_{top}$
 is isomorphic with  Heegaard Floer cohomology $HF^+$
  for links of  normal surface singularities which are rational homology spheres \cite{Nlattice}.
  More  precisely, one expects
  $HF^+_{odd/even}\simeq \oplus_{q\ odd/even} \,\bH^q_{top}$ as graded $\Z[U]$--modules (up to a shift).

\smallskip

This conjecture  has been affirmatively answered for a number of important families of singularities \cite{NOSz,OSSz3},
including those links which are Seifert fibered three-manifolds.
\cite{OSSz3} provides a spectral sequence from the lattice cohomology to the $HF$--cohomology, whose degeneration is
equivalent with the conjecture.

For several properties and applications  in singularity theory see \cite{NOSz,NSurgd,NGr,Nexseq,NeLO}. For its connection with the classification of
projective rational plane cuspidal curves (via superisolated surface  singularities) see \cite{NSurgd,BodNem,BodNem2,BLMN2,BCG,BL1}.
Furthermore, by its construction and key properties, $\bH^*_{top}$  realizes several  deep connections with analytic invariants of the germ   as well
(e.g. it provides sharp topological bounds for analytic invariants, see e.g.  \cite{NSig,NSigNN}).

\bekezdes Recently, in \cite{AgNe1,AgNeIII} we introduced their analytic analogues,
the analytic lattice cohomology $\bH^*_{an}=\oplus_{q\geq 0}\bH^q_{an}$, associated with
a normal surface singularity with a rational homology sphere link.
It is  constructed from analytic invariants of a good resolution, however
it turns out that it is independent of the choice of the resolution.
 Formally it has a  very similar structure as its topological analogue, e.g.
  the analogue of \ref{1.1.1}(b) is valid and each $\bH^*_{an}(X,\sigma)$ is a graded $\Z[U]$--module.
   The Euler characteristic of $\bH^*_{an}(X)$ is
the equivariant geometric genus.

Additionally, we succeeded in  constructing even a morphism of graded $\bZ[U]$-modules
$\mathfrak{H}^q:\bH^q  _{an}\to \bH^q_{top}$.
This is an isomorphism for some `nice' analytic structures.
In such cases we have the identity of the  Euler characteristics as well, namely of the Seiberg--Witten invariant with the
geometric genus. In fact, historically, this identity (The Seiberg--Witten Invariant Conjecture of the the second author  and Nicolaescu \cite{NeNi,NJEMS}, valid for `nice' analytic structures)
led to the discovery of $\bH^*_{top}$.
However, if we fix a topological type, and we move the possible analytic structure supported on this topological type, then the analytic lattice cohomologies reflect the modification of the analytic structures, for several examples see \cite{AgNe1}.

\bekezdes What is very surprising is that $\bH^*_{an}$ can be extended to other dimensions too,
to isolated singularities of dimension $n\geq2$, but even to the case of curves.
In this note we present {\it this extension to the higher dimensional singular germs}.

Again, in the definition we use a good resolution $\tX\to X$ of the singular germ $(X,o)$, with exceptional curve $E$. In the definition the multivariable divisorial filtration
associated with the irreducible components of $E$ has a key role.   We
 verify  that the newly defined $\bH^*_{an}$ is independent of the resolution whenever $h^{n-1}({\mathcal O}_E)=0$, and its Euler characteristic is
$h^{n-1}({\mathcal O}_{\widetilde{X}})$.

As an example, for isolated weighted homogeneous hypersurface singularities,
by the Reduction Theorem \ref{th:REDAN} the lattice cohomology can be computed via the divisorial
filtration of a unique
exceptional divisor (the exceptional divisor of the weighted blow up).
Since this $\Z$--filtration can be identified with the corresponding Newton filtration
(and the number of lattice points below the
Newton diagram is $p_g=h^{n-1}(\calO_{\tX})$, and the Hilbert function of the Newton filtration can be identified by the
Hodge spectrum in the interval $(0,1)$), we get a  characterization of $\bH^*_{an}$ by the
Hodge spectrum in $(0,1)$.

\bekezdes In fact, we obtain  more than the definition of the `lattice {\it cohomology}'.
Indeed, we define a sequence  of (finite cubical topological) spaces $\{S_n\}_{n\in\Z_{\geq  n_0}}$
with inclusions $\cdots \subset S_n\subset S_{n+1}\subset \cdots$ such that
$\bH^*_{an}=\oplus_{n\geq n_0}H^*(S_n, \Z)$, and the homotopy type of the tower of spaces depends only
on the analytic type of $(X,o)$. Therefore, in the spirit of the constructions of
`Khovanov homotopy type' of R. Lipshitz and S. Sarkar,
or of `Knot Floer stable homotopy type' of C. Manolescu and S. Sarkar,
in fact we have constructed the `(analytic) lattice homotopy type' of $(X,o)$ via the tower
$\{S_n\}_n$.

\bekezdes Note that in the case $n=2$ the analytic lattice cohomlogy was defined using as model
the topological lattice cohomology (and it was motivated by the topological cohomologies of the low-dimensional topology).

The higher dimensional case  has the  interesting aspect
that  we define the analytic lattice cohomology without having any  parallel topological
model.
 In fact,  the definition of the topological
$\bH_{top}^*$ is obstructed very seriously, since in the dimensions $n>2$ the link $M$
contains much less information from the singularity, e.g. $M$ can even be the standard sphere $S^{2n-1}$ for rather non-trivial analytic types $(X,o)$.

However, we expect the existence of a parallel theory in this higher dimensional case too: our expectation is
that it  should be the higher dimensional version of
the {\it Embedded Contact Homology} (ECH),  where the contact structure (induced by the analytic structure of $(X,o)$) on $M$
really plays a role.  (Recall that in the $n=2$ case this contact structure can  topologically be identified \cite{CNP},
a fact which does not hold in higher dimensions \cite{Ust}.)  Research in finding ECH in higher dimension was
initiated  by Colin--Honda  \cite{CH}.

\subsection{The structure of the paper}
In section 2  we review the general
definition of the lattice cohomology, of  the path lattice cohomology  and of the graded
root associated with a weight function.
In section 3 we review some statements regarding the Euler characteristic of a lattice cohomology
(in a combinatorial setup).

In section 4, after  we review certain analytic results regarding singularities and resolutions,
we define the analytic lattice cohomology and graded root (via a good resolution).
In Theorem \ref{th:annlattinda} we prove  their independence of the resolution. Using results of section 3 we
identify  the Euler characteristic as well. Subsection \ref{ss:anRT}
proves a `Reduction Theorem'. Using this we can reduce the rank of the lattice
(used in the basic construction).  This new lattice is identified by a set
of `bad' vertices.

In order to  define the new objects, and also to prove their  independence of the resolution,
we need to impose an assumption, namely the vanishing of $h^{n-1}(\calO_E)$. In section
5 we relate this vanishing with some  mixed Hodge theoretical
invariants. E.g, in the case of
isolated hypersurface singularities it is equivalent with the non-existence of spectral numbers
equal to one. (Hence, if the link is rational homology sphere, then this condition  is automatically satisfied.)
In section 6 we discuss the case of weighted homogeneous hypersurface singularities.

For more examples in the case $n=2$ see \cite{AgNe1}.

\section{Preliminaries. Basic properties of lattice cohomology}\label{s:Prem1}

\subsection{The lattice cohomology associated with  a weight function}\label{ss:latweight} \cite{NOSz,Nlattice}

\bekezdes {\bf Weight function.}
 We consider a free $\Z$-module, with a fixed basis
$\{E_v\}_{v\in\calv}$, denoted by $\Z^s$, $s:=|\calv|$.
Additionally, we consider a {\it weight  function} $w_0:\Z^s\to \Z$ with the property
\begin{equation}\label{9weight}
\mbox{for any integer $n\in\Z$, the set $w_0^{-1}(\,(-\infty,n]\,)$
is finite.}\end{equation}

%

\bekezdes\label{9complex} {\bf The weighted cubes.}
The space
$\Z^s\otimes \R$ has a natural cellular decomposition into cubes. The
set of zero-dimensional cubes is provided  by the lattice points
$\Z^s$. Any $l\in \Z^s$ and subset $I\subset \calv$ of
cardinality $q$  defines a $q$-dimensional cube $\square_q=(l, I)$, which has its
vertices in the lattice points $(l+\sum_{v\in I'}E_v)_{I'}$, where
$I'$ runs over all subsets of $I$.
 The set of $q$-dimensional cubes  is denoted by $\calQ_q$ ($0\leq q\leq s$).

Using $w_0$ we define
$w_q:\calQ_q\to \Z$  ($0\leq q\leq s$) by
$w_q(\square_q):=\max\{w_0(l)\,:\, \mbox{$l$ is a vertex of $\square_q$}\}$.

For each $n\in \Z$ we
define $S_n=S_n(w)\subset \R^s$ as the union of all
the cubes $\square_q$ (of any dimension) with $w(\square_q)\leq
n$. Clearly, $S_n=\emptyset$, whenever $n<m_w:=\min\{w_0\}$. For any  $q\geq 0$, set
$$\bH^q(\R^s,w):=\oplus_{n\geq m_w}\, H^q(S_n,\Z)\ \ \mbox{and}\ \
\bH^q_{red}(\R^s,w):=\oplus_{n\geq m_w}\, \widetilde{H}^q(S_n,\Z).$$
Then $\bH^q$ is $\Z$ (in fact, $2\Z$)-graded, the
$2n$-homogeneous elements $\bH^q_{2n}$ consist of  $H^q(S_n,\Z)$.
Also, $\bH^q$ is a $\Z[U]$-module; the $U$-action is given by
the restriction map $r_{n+1}:H^q(S_{n+1},\Z)\to H^q(S_n,\Z)$.
The same is true for $\bH^*_{red}$.
 Moreover, for
$q=0$, a fixed base-point $l_w\in S_{m_w}$ provides an augmentation
(splitting)
 $H^0(S_n,\Z)=
\Z\oplus \widetilde{H}^0(S_n,\Z)$ for any $n\geq m_w$, hence an augmentation of the graded
$\Z[U]$-modules (where
$\calt_{2m}^+= \Z\langle U^{-m}, U^{-m-1},\ldots\rangle$ as a $\Z$-module with its natural $U$--action)
$$\bH^0\simeq\calt^+_{2m_w}\oplus \bH^0_{red}=(\oplus_{n\geq m_w}\Z)\oplus (
\oplus_{n\geq m_w}\widetilde{H}^0(S_n,\Z))\ \ \mbox{and} \ \
\bH^*\simeq \calt^+_{2m_w}\oplus \bH^*_{red}.$$

Though
$\bH^*_{red}(\R^s,w)$ has finite $\Z$-rank in any fixed
homogeneous degree, in general,  without certain additional properties of $w_0$, it is not
finitely generated over $\Z$, in fact, not even over $\Z[U]$.

\bekezdes\label{9SSP} {\bf Restrictions.} Assume that $T\subset \R^s$ is a subspace
of $\R^s$ consisting of a union of some cubes (from $\calQ_*$). For any $q\geq 0$ define $\bH^q(T,w)$ as
$\oplus_{n\geq\min{w_0|T}} H^q(S_n\cap T,\Z)$. It has a natural graded $\Z[U]$-module
structure.  The restriction map induces a natural graded
$\Z[U]$-module homogeneous homomorphism
$$r^*:\bH^*(\R^s,w)\to \bH^*(T,w) \ \ \ \mbox{(of degree zero)}.$$
In our applications to follow, $T$ (besides the trivial $T=\R^s$ case) will be one of the following:
%
(i)  the first quadrant $(\R_{\geq o})^s$,
(ii) the rectangle $[0,c]=\{x\in \R^s\,:\, 0\leq x\leq c\}$ for some lattice point $c\geq 0$, or
(iii)  a path of composed edges in the lattice, cf. \ref{9PCl}.

\bekezdes \label{9F} {\bf The `Euler characteristic' of\, $\bH^*$.}
Fix $T$ as in  \ref{9SSP} and we will assume that each $\bH^*_{red}(T,w)$ has finite $\Z$--rank.
%
The Euler characteristic of $\bH^*(T,w)$ is defined as
$$eu(\bH^*(T,w)):=-\min\{w(l)\,:\, l\in T\cap \Z^s\} +
\sum_q(-1)^q\rank_\Z(\bH^q_{red}(T,w)).$$

\begin{lemma}\cite{NJEMS}\label{bek:LCSW} If $T=[0,c]$ for a lattice point $c\geq 0$, then
\begin{equation}\label{eq:Ecal}
 \sum_{\square_q\subset T} (-1)^{q+1}w_k(\square_q)=eu(\bH^*(T,w)).\end{equation}
 \end{lemma}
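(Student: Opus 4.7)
The plan is to reduce both sides to an elementary counting identity involving the Euler characteristics of the finite subcomplexes $S_n\cap T$.

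First I would note that $T=[0,c]$ is a contractible finite cubical complex (it consists of finitely many unit cubes), so $\chi(T)=1$, and each $S_n\cap T$ is a finite subcomplex. Since the cellular Euler characteristic equals the alternating count of cells,
\[
\chi(S_n\cap T)=\sum_{\substack{\square_q\subset T\\ w_q(\square_q)\leq n}}(-1)^q.
\]
Set $m_w:=\min\{w_0(l)\,:\,l\in T\cap\Z^s\}$ and $N:=\max\{w_q(\square_q)\,:\,\square_q\subset T\}$. For $n\geq N$ we have $S_n\cap T=T$, hence $\chi(S_n\cap T)=1$; for $n<m_w$ the intersection is empty. Consequently, writing $\widetilde\chi=\chi-1$ for $n\geq m_w$, the sum
\[
\sum_{q}(-1)^q\mathrm{rank}_{\Z}\bH^q_{red}(T,w)=\sum_{n\geq m_w}\widetilde\chi(S_n\cap T)=\sum_{n=m_w}^{N-1}\bigl(\chi(S_n\cap T)-1\bigr)
\]
is a finite sum. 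In particular, $\bH^q_{red}(T,w)$ has finite rank, so the Euler characteristic in the statement is well-defined.

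Next I would rewrite the left hand side of \eqref{eq:Ecal} by the standard telescoping trick. For every cube $\square_q\subset T$ the weight $w_q(\square_q)\geq m_w$, so
\[
w_q(\square_q)=m_w+\sum_{n\geq m_w}\bigl[\,w_q(\square_q)>n\,\bigr],
\]
where $[\,\cdot\,]$ is the Iverson bracket. Multiplying by $(-1)^{q+1}$ and summing over all cubes in $T$:
\[
\sum_{\square_q\subset T}(-1)^{q+1}w_q(\square_q)=m_w\!\!\sum_{\square_q\subset T}(-1)^{q+1}+\sum_{n\geq m_w}\sum_{\square_q\subset T}(-1)^{q+1}\bigl[w_q(\square_q)>n\bigr].
\]

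Finally I would interpret the two terms on the right. The first is $-m_w\cdot\chi(T)=-m_w$, because $\sum_{\square_q\subset T}(-1)^q=\chi(T)=1$. For the second, the cubes with $w_q(\square_q)>n$ are exactly those of $T$ not in $S_n\cap T$, hence
\[
\sum_{\square_q\subset T}(-1)^{q+1}\bigl[w_q(\square_q)>n\bigr]=-\bigl(\chi(T)-\chi(S_n\cap T)\bigr)=\chi(S_n\cap T)-1.
\]
Combining these identifications with the earlier computation of $\sum_q(-1)^q\mathrm{rank}\,\bH^q_{red}(T,w)$,
\[
\sum_{\square_q\subset T}(-1)^{q+1}w_q(\square_q)=-m_w+\sum_{n\geq m_w}\bigl(\chi(S_n\cap T)-1\bigr)=eu(\bH^*(T,w)),
\]
as claimed.

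There is no real obstacle: the argument is a pure combinatorial bookkeeping, using only that $T$ is contractible, finite, and that $\chi$ equals the alternating count of cells. The only small point to watch is that reduced cohomology must be used so that the telescoping matches $\chi(S_n\cap T)-1$ and the trailing constant $1$'s (coming from $S_n\cap T=T$ for $n\gg 0$) cancel, making the infinite sum actually finite.
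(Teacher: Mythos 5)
Your proof is correct. Note that the paper does not actually prove this lemma but cites it from \cite{NJEMS}; your argument---writing $w_q(\square_q)-m_w$ as $\sum_{n\geq m_w}[w_q(\square_q)>n]$, interchanging the (finite) double sum, and identifying each inner sum with $\chi(S_n\cap T)-\chi(T)$ while using $\chi(T)=1$---is the standard telescoping computation behind this identity, and it matches the approach in the cited source.
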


\subsection{Path lattice cohomology}\label{9PCl}\cite{Nlattice}

\bekezdes \label{bek:pathlatticecoh}
Fix $\Z^s$  as in \ref{ss:latweight} and fix also a compatible weight functions
 $\{w_q\}_q$   as in \ref{9weight}. 
 Consider also a sequence $\gamma:=\{x_i\}_{i=0}^t$  so that $x_0=0$,
$x_i\not=x_j$ for $i\not=j$, and $x_{i+1}=x_i\pm E_{v(i)}$ for
$0\leq i<t$. We write $T$ for the
union of 0-cubes marked by the points $\{x_i\}_i$ and of the
segments of type  $[x_i,x_{i+1}]$.
Then, by \ref{9SSP} we get a graded $\Z[U]$-module $\bH^*(T,w)$,
which is called the {\em
path lattice cohomology} associated with the `path' $\gamma$ and weights
$\{w_q\}_{q=0,1}$. It is denoted by $\bH^*(\gamma,w)$.
It has an augmentation with $\calt^+_{2m_\gamma}$,
where $m_\gamma:=\min_i\{w_0(x_i)\}$, and one gets the {\em reduced path lattice
cohomology} $\bH^0_{red}(\gamma,w)$ with
$$\bH^0(\gamma,w)\simeq\calt_{2m_\gamma}^+\oplus
\bH^0_{red}(\gamma,w).$$
It turns out that  $\bH^q(\gamma,w)=0$ for $q\geq 1$, hence its `Euler characteristic' can be defined as  (cf.  \ref{9F})
\begin{equation}\label{eq:euh0}
eu(\bH^*(\gamma,w)):=-m_\gamma+\rank_\Z\,(\bH^0_{red}(\gamma,w)).\end{equation}

\begin{lemma} \label{9PC2}
One has the following expression of $eu(\bH^*(\gamma,w))$ in terms of the values of $w$:
\begin{equation}\label{eq:pathweights}
eu(\bH^*(\gamma,w))=-w_0(0)+\sum_{i=0}^{t-1}\,
\max\{0, w_0(x_{i})-w_0(x_{i+1})\}.
\end{equation}
\end{lemma}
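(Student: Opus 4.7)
The plan is to reduce the Euler characteristic to a count of connected components of the sublevel sets and then carry out a short double-sum manipulation.

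First, observe that the underlying space $T$ of the path $\gamma$ is $1$-dimensional and homeomorphic to a subdivided interval: since the $x_i$ are pairwise distinct and each edge $[x_i,x_{i+1}]$ is a unit-length axis-aligned segment in $\R^s$, two distinct such edges meet only at their common vertex endpoints in $\{x_0,\ldots,x_t\}$. Thus $T$ is a tree, and for every $n$ the sublevel set $S_n\cap T$---the union of the vertices $x_i$ with $w_0(x_i)\leq n$ and the edges $[x_i,x_{i+1}]$ both of whose endpoints satisfy this bound---is a sub-forest of $T$. In particular $H^q(S_n\cap T;\Z)=0$ for $q\geq 1$, which confirms the vanishing $\bH^q(\gamma,w)=0$ for $q\geq 1$ asserted just before the statement, and reduces the Euler characteristic (2.2.4) to $-m_\gamma+\rank_\Z\bH^0_{red}(\gamma,w)$.

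Because $S_n\cap T$ is a forest, its number of connected components $c(n)$ satisfies $c(n)=\#A_n-\#E_n$, where $A_n=\{i\,:\,w_0(x_i)\leq n\}$ and $E_n=\{i\,:\,0\leq i<t,\ \max(w_0(x_i),w_0(x_{i+1}))\leq n\}$. For $n\geq M:=\max_i w_0(x_i)$ the whole set $T$ lies in $S_n$, so $c(n)=1$; hence only finitely many terms contribute to
\[
\rank_\Z\bH^0_{red}(\gamma,w)=\sum_{n=m_\gamma}^{M-1}\bigl(c(n)-1\bigr)=\sum_{n=m_\gamma}^{M-1}\bigl(\#A_n-\#E_n-1\bigr).
\]

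To finish, swap the order of summation using
\[
\sum_{n=m_\gamma}^{M-1}\#A_n=\sum_{i=0}^t\bigl(M-w_0(x_i)\bigr),\quad \sum_{n=m_\gamma}^{M-1}\#E_n=\sum_{i=0}^{t-1}\bigl(M-\max(w_0(x_i),w_0(x_{i+1}))\bigr),
\]
together with $\sum_{n=m_\gamma}^{M-1}1=M-m_\gamma$. Substituting and collecting, the multiples of $M$ cancel, yielding
\[
eu(\bH^*(\gamma,w))=-\sum_{i=0}^t w_0(x_i)+\sum_{i=0}^{t-1}\max(w_0(x_i),w_0(x_{i+1})).
\]
Writing $\max(w_0(x_i),w_0(x_{i+1}))=w_0(x_{i+1})+\max\{0,w_0(x_i)-w_0(x_{i+1})\}$ makes $\sum_{i=1}^t w_0(x_i)$ telescope against $-\sum_{i=0}^t w_0(x_i)$, producing the right-hand side of the lemma in view of $x_0=0$. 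The only step above that uses anything beyond bookkeeping is the identification $c(n)=\#A_n-\#E_n$, which rests on $S_n\cap T$ being a forest; an equivalent (and equally short) alternative is induction on $t$, tracking how attaching $x_t$ together with the edge $[x_{t-1},x_t]$ modifies $m_\gamma$ and the components of each $S_n\cap T$.
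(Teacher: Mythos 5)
Your proof is correct. The paper does not reprove Lemma \ref{9PC2}; it is stated with a citation to \cite{Nlattice}, so there is no in-text argument to compare against. Your route — observe that $T$ is a tree, that each $S_n\cap T$ is a forest with $\#A_n$ vertices and $\#E_n$ edges, so $\widetilde H^{\geq 1}$ vanishes and $c(n)=\#A_n-\#E_n$, then double-count over $n$ and telescope using $\max(a,b)=b+\max\{0,a-b\}$ — is the standard argument for this identity and goes through exactly as you wrote it. One small point worth making explicit: the identification $c(n)=\#A_n-\#E_n$ needs not just that $S_n\cap T$ is a graph with those vertex and edge counts, but that it is acyclic; you do flag this, and it indeed follows from $T$ itself being a tree (the path has $t+1$ distinct lattice vertices and $t$ edges, and is connected), together with the fact that axis-parallel unit lattice edges can only meet at common lattice endpoints.
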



\subsection{Graded roots and their cohomologies}\label{s:grgen} \cite{NOSz,NGr}

\begin{definition}\label{def:2.1} \
  Let $\RR$ be an infinite tree with vertices $\calv$ and edges
$\cale$. We denote by $[u,v]$ the edge with
 end-vertices  $u$ and $v$.  We say that $\RR$ is a {\em graded root}
with grading $\chic:\calv\to \Z$ if

(a) $\chic(u)-\chic(v)=\pm 1$ for any $[u,v]\in \cale$;

(b) $\chic(u)>\min\{\chic(v),\chic(w)\}$ for any $[u,v],\
[u,w]\in\cale$, $v\neq w$;

(c) $\chic$ is bounded from below, $\chic^{-1}(n)$ is finite for any
$n\in\Z$, and $|\chic^{-1}(n)|=1$ if $n\gg 0$.

%

\vspace{1mm}

\noindent
An isomorphism of graded roots is a graph isomorphism, which preserves the gradings.
\end{definition}

\begin{definition}\label{9.2.6}{\bf The
  $\Z[U]$-modules associated with a graded root.}
Let us identify a graded root $(\RR,\chic)$ with its topological realization provided by vertices (0--cubes)
and segments (1--cubes). Define $w_0(v)=\chic(v)$, and $w_1([u,v])=\max\{\chic(u),\chic(v)\}$ and let $S_n$ be the
union of all cubes with weight $\leq n$. Then  we might set (as above) $\bH^*(\RR,\chi)=\oplus_{n\geq \min\chic}\
H^*(S_n,\Z)$. However, at this time $\bH^{\geq 1}(\RR,\chic)=0$; we set $\bH(\RR,\chic):=\bH^0(\RR,\chic)$.
Similarly, one defines $\bH_{red}(\RR,\chic)$ using the reduced cohomology, hence
$\bH(\RR,\chic)\simeq\calt_{2\min \chic}^+\oplus \bH_{red}(\RR,\chic)$.
\end{definition}

\bekezdes\label{bek:GRootW}{\bf The graded root associated with a weight function.}
Fix a free $\Z$-module and a weight function  $w_0$.
Consider the sequence of  topological  spaces (finite cubical  complexes) $\{S_n\}_{n\geq m_w}$
with $S_n\subset S_{n+1}$, cf. \ref{9complex}.
Let $\pi_0(S_n)=\{\calC_n^1,\ldots , \calC_n^{p_n}\}$ be the set of connected components of $S_n$.

Then  we define the  graded graph  $(\RR_w,\chic_w)$ as follows. The
vertex set  $\calv(\RR_w)$ is $\cup_{n\in \Z} \pi_0(S_n)$.
The grading $\chic_w:\calv(\RR_w)\to\Z$ is
$\chic_w(\calC_n^j)=n$, that is, $\chic_w|_{\pi_0(S_n)}=n$.
Furthermore, if  $\calC_{n}^i\subset \calC_{n+1}^j$ for some $n$, $i$ and $j$,
then we introduce an edge $[\calC_n^i,\calC_{n+1}^j]$. All the edges
 of $\RR_w$ are obtained in this way.

One verifies that
 $(\RR_w,\chic_w)$ satisfies all the required properties
of the definition of a graded root, except possibly  the last one: $|\chic_w^{-1}(n)|=1$ whenever $n\gg 0$.

The property  $|\chic_w^{-1}(n)|=1$ for $n\gg 0$ is not always satisfied.
However, the graded roots associated with connected negative definite plumbing graphs
(see below) satisfy this condition as well.

\begin{proposition}\label{th:HHzero} If $\RR$ is a graded root associated with $(T,w)$ and
 $|\chic_w^{-1}(n)|=1$ for all $n\gg 0$ then $\bH(\RR)=\bH^0(T,w)$.
\end{proposition}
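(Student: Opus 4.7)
The plan is to build, for every grading level $n$, a natural bijection
$\Phi_n : \pi_0(S_n(\RR)) \to \pi_0(S_n(T,w))$, and to check that these bijections are compatible with the inclusions $S_n \hookrightarrow S_{n+1}$ on both sides. Since $H^0$ of a space with the homotopy type of a finite cubical complex is the free $\Z$-module on $\pi_0$, and since the $U$-action on each side is induced by the restriction map coming from these inclusions, this will give the desired isomorphism of graded $\Z[U]$-modules after taking the direct sum over $n$.

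First I would unpack $S_n(\RR)$: by Definition \ref{9.2.6}, it is the subcomplex of the 1-dimensional CW-realization of $\RR$ consisting of all vertices $v$ with $\chic_w(v)\leq n$ together with all edges $[u,v]$ such that $\max\{\chic_w(u),\chic_w(v)\}\leq n$. By construction of $(\RR_w,\chic_w)$ in \ref{bek:GRootW}, the set of vertices of $S_n(\RR)$ is the disjoint union $\bigsqcup_{m\leq n}\pi_0(S_m(T,w))$, and an edge $[\calC_m^i,\calC_{m+1}^j]$ belongs to $S_n(\RR)$ exactly when $m+1\leq n$ and $\calC_m^i\subset \calC_{m+1}^j$ as subsets of $T$.

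Next I would define $\Phi_n$: each connected component of $S_n(\RR)$ contains some vertex $\calC_m^i$ with $m\leq n$, and I send the component to the unique $\calC_n^k\in\pi_0(S_n(T,w))$ containing $\calC_m^i$. Well-definedness follows from the edge description above: traversing an edge $[\calC_m^i,\calC_{m+1}^j]$ inside $S_n(\RR)$ does not change the ambient component of $S_n(T,w)$, since $\calC_m^i\subset\calC_{m+1}^j\subset\calC_n^k$. Surjectivity is immediate, as $\calC_n^k$ itself is a vertex of $S_n(\RR)$ at grading $n$ mapping to $\calC_n^k$. Injectivity is where the tree structure of $\RR$ is used: if $\calC_m^i$ and $\calC_{m'}^{i'}$ both lie in the same $\calC_n^k$, then the two ascending chains of inclusions
\[
\calC_m^i\subset\calC_{m+1}^{\bullet}\subset\cdots\subset\calC_n^k,\qquad \calC_{m'}^{i'}\subset\calC_{m'+1}^{\bullet}\subset\cdots\subset\calC_n^k
\]
produce two edge-paths in $\RR$ all of whose edges have weight $\leq n$, hence lie in $S_n(\RR)$, and both terminate at the common vertex $\calC_n^k$.

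Finally, the inclusions $S_n(T,w)\hookrightarrow S_{n+1}(T,w)$ are recorded on the side of $\RR$ exactly by the edges $[\calC_n^k,\calC_{n+1}^{\bullet}]$, so the bijections $\Phi_n$ fit into a commutative ladder with the inclusions $S_n(\RR)\hookrightarrow S_{n+1}(\RR)$. Passing to $H^0(-,\Z)$ reverses these arrows and gives an identification of the $U$-actions on both sides. The hypothesis $|\chic_w^{-1}(n)|=1$ for $n\gg 0$ plays no role in the construction of $\Phi_n$ itself; it is used only to guarantee that $\RR_w$ is an honest graded root, so that $\bH(\RR)$ is defined and carries the augmentation by $\calt^+_{2\min\chic_w}$ that matches the augmentation on $\bH^0(T,w)$ coming from a basepoint in $S_{m_w}$. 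I do not expect a genuine obstacle: the content is essentially that the graded root $\RR_w$ was designed to encode precisely the $\pi_0$ information of the filtration $\{S_n(T,w)\}_n$; the only mild care needed is in verifying injectivity of $\Phi_n$, i.e.\ that components merged at level $n$ in $T$ are already connected in $S_n(\RR)$ rather than only in $S_{n'}(\RR)$ for some $n'>n$.
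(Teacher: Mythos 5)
The paper states this proposition without proof, so there is no argument to compare against; your proof is correct and is precisely the intended one. The key observations are all in place: the natural bijection $\Phi_n:\pi_0(S_n(\RR))\to\pi_0(S_n(T,w))$, its well-definedness via the chain $\calC_m^i\subset\calC_{m+1}^j\subset\cdots\subset\calC_n^k$, the genuinely-needing-care injectivity step (that the ascending chains stay inside $S_n(\RR)$, which holds because the edge $[\calC_{p}^\bullet,\calC_{p+1}^\bullet]$ has weight $p+1\leq n$ for $p<n$), and the commutativity of the ladder with the inclusions $S_n\hookrightarrow S_{n+1}$, which identifies the $U$-actions after applying $H^0(-,\Z)$. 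Your remark on the role of the hypothesis $|\chic_w^{-1}(n)|=1$ for $n\gg 0$ is also accurate: the bijections $\Phi_n$ exist regardless, and the hypothesis only ensures that $\RR_w$ satisfies Definition \ref{def:2.1}(c) so that $\bH(\RR)$ is defined as the cohomology of a bona fide graded root with the matching augmentation $\calt^+_{2m_w}$.
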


\section{Combinatorial lattice cohomology} \label{ss:CombLattice}

\subsection{}
In this section we review several combinatorial statements regarding the lattice cohomology
associated with any weight function with certain combinatorial  properties. We follow \cite{AgNe1}.


\bekezdes \label{bek:comblattice}
Fix  $\Z^s$ with a fixed basis $\{E_v\}_{v\in\cV}$.
Write $E_I=\sum_{v\in I}E_v$ for $I\subset \cV$ and  $E=E_{\cV}$.
Fix also  an element
$c\in \Z^s$, $c\geq E$.
Consider the lattice points $R=R(0,c):=\{l\in\Z^s\,:\, 0\leq l\leq c\}$, and assume that
to each $l\in R$ we assign

(i)   an integer $h(l)$ such that $h(0)=0$ and $h(l+E_v)\geq h(l)$
for any $v$,

(ii)  an integer $h^\circ (l)$ such that $h^\circ (l+E_v)\leq  h^\circ (l)$
for any $v$.


Once  $h$ is fixed with (i),
a possible choice for $h^\circ $ is
 $h^{sym}$, where $h^{sym}(l)=h(c-l)$. Clearly, it depends on $c$.

\bekezdes
 We say that the $h$-function 
 satisfies the  {\it `matroid rank inequality'} if
 \begin{equation}\label{eq:matroid0}
 h(l_1)+h(l_2)\geq h(\min\{l_1,l_2\})+h(\max\{l_1,l_2\}), \ \ l_1,l_2\in R.
 \end{equation}
 This implies the {\it `stability property'},
 valid for any $\bar{l}\geq 0$ with $|\bar{l}|\not\ni E_v$, namely
 \begin{equation}\label{eq:stability}
 h(l)=h(l+E_v)\ \ \Rightarrow\  \ h(l+\bar{l})=h(l+\bar{l}+E_v).
 \end{equation}
If $\hh$ is given by a filtration (see below) then it automatically satisfies the matroid
rank inequality.

\bekezdes
We  consider the set of cubes $\{\calQ_q\}_{q\geq 0}$ of $R$ as in \ref{9complex} and
the weight function
$$w_0:\calQ_0\to\Z\ \ \mbox{by} \ \ w_0(l):=h(l)+h^\circ (l)-h^\circ (0).$$
Clearly  $w_0(0)=0$.
Furthermore, we define
$w_q:\calQ_q\to \Z$ by $ w_q(\square_q)=\max\{w_0(l)\,:\, l \
 \mbox{\,is a vertex of $\square_q$}\}$. We will use the symbol $w$ for the
 system $\{w_q\}_q$. It
defines the lattice cohomology $\bH^*(R,w)$.
Moreover, for any increasing path $\gamma$ connecting 0 and $c$
 we also have
 a path lattice cohomology $\bH^0(\gamma,w)$ as in \ref{bek:pathlatticecoh}. Accordingly,  we have the numerical
Euler characteristics  $eu(\bH^*(R,w))$, $eu(\bH^0(\gamma,w))$ and $\min_\gamma eu(\bH^0(\gamma,w))$.

\begin{lemma}\label{lem:comblat} \ \cite{AgNe1} We have   $0\leq eu(\bH^0(\gamma,w))\leq h^\circ (0)-h^\circ (c)$
for any increasing path $\gamma$  connecting  0 to $c$.
 The equality $eu(\bH^0(\gamma,w))=h^\circ (0)-h^\circ (c)$
holds if and only if for any $i$
the differences $h(x_{i+1})-h(x_i)$ and $h^\circ (x_{i})-h^\circ (x_{i+1})$ simultaneously are not nonzero.
\end{lemma}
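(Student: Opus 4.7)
The plan is to apply the explicit formula (\ref{eq:pathweights}) from Lemma \ref{9PC2} directly, rewritten in terms of the defining data $h$ and $h^\circ$ of $w_0$, and then exploit the monotonicity hypotheses (i)--(ii) step by step.

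First I would observe that since $w_0 = h + h^\circ - h^\circ(0)$ and $w_0(0) = 0$, the formula (\ref{eq:pathweights}) reduces to
\[
eu(\bH^*(\gamma,w)) = \sum_{i=0}^{t-1} \max\{0,\ w_0(x_i) - w_0(x_{i+1})\}.
\]
Since $\gamma$ is increasing, each step has the form $x_{i+1} = x_i + E_{v(i)}$, so by the monotonicity assumptions the quantities
\[
a_i := h(x_{i+1}) - h(x_i) \geq 0, \qquad b_i := h^\circ(x_i) - h^\circ(x_{i+1}) \geq 0
\]
are both nonnegative, and $w_0(x_i) - w_0(x_{i+1}) = b_i - a_i$.

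Next I would derive both inequalities. The lower bound $eu(\bH^0(\gamma,w)) \geq 0$ is immediate since each summand $\max\{0, b_i - a_i\}$ is nonnegative. For the upper bound, using $a_i \geq 0$ we get $\max\{0, b_i - a_i\} \leq b_i$, hence
\[
eu(\bH^0(\gamma,w)) \leq \sum_{i=0}^{t-1} b_i = h^\circ(0) - h^\circ(c)
\]
by telescoping (using that $\gamma$ connects $0$ to $c$).

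Finally, for the equality statement I would analyze when $\max\{0, b_i - a_i\} = b_i$ holds termwise. If $b_i = 0$ the equality is automatic. If $b_i > 0$, then $\max\{0, b_i - a_i\} = b_i$ forces $b_i - a_i = b_i$, i.e.\ $a_i = 0$. Thus equality of the Euler characteristic with the bound $h^\circ(0)-h^\circ(c)$ is equivalent to the implication $b_i > 0 \Rightarrow a_i = 0$ for every $i$, i.e.\ to the condition that $h(x_{i+1})-h(x_i)$ and $h^\circ(x_i)-h^\circ(x_{i+1})$ are not simultaneously nonzero. Since every step of the argument is a direct manipulation of the closed formula and the monotonicity assumptions, no substantive obstacle is expected; the only point to be careful about is recording the sign conventions correctly so that $a_i$ and $b_i$ come out nonnegative.
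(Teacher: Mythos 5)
Your proof is correct and follows the natural route: specialize the closed formula of Lemma \ref{9PC2} using $w_0(0)=0$, split each increment $w_0(x_i)-w_0(x_{i+1})$ into its $h$- and $h^\circ$-parts $a_i, b_i\geq 0$, bound $\max\{0,b_i-a_i\}$ between $0$ and $b_i$, telescope, and read off the termwise equality condition. The paper itself only cites this lemma to \cite{AgNe1} without reproducing the argument, but your derivation is the standard one that the setup of Lemma \ref{9PC2} and the monotonicity hypotheses (i)--(ii) are designed to make immediate, so there is no material difference in approach to report.
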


\begin{definition}\label{def:COMPGOR}
Fix  $(h,h^\circ,R)$ as in \ref{bek:comblattice}.
 We say that the pair $h$ and $h^\circ$ satisfy the `Combinatorial Duality  Property' (CDP) if
$h(l+E_v)-h(l)$ and $h^\circ (l+E_v)-h^\circ (l)$ simultaneously cannot be nonzero
for $l,\, l+E_v\in R$. Furthermore,
 we say that $h$  satisfies  the CDP  if
 the pair $(h,h^{sym})$ satisfies  it.
\end{definition}

\begin{definition}\label{def:comblat}
We say that the pair   $(h, h^\circ) $ satisfy the

(a) {\it `path eu-coincidence'} if $eu(\bH^0(\gamma,w))=h^\circ (0)-h^\circ (c)$
for any increasing path $\gamma$.

(b)  {\it `eu-coincidence'} if $eu(\bH^*(R,w))=h^\circ (0)-h^\circ (c)$.
\end{definition}

\begin{remark}
 Example 4.3.3 of \cite{AgNe1}
 shows the following two facts.

Even if $h$ satisfies the path eu-coincidence (and $h^\circ =h^{sym}$),
in general it is not true that $\bH^0(\gamma,w)$
is independent of the choice of the increasing path.
(This statement remains valid even if we consider only the symmetric increasing paths, where a
 path $\gamma=\{x_i\}_{i=0}^t$ is symmetric if $x_{t-l}=c-x_l$ for any $l$.)

Even if $h$ satisfies both the path eu-coincidence and the eu-coincidence,
in general it is not true that $\bH^*(R,w)$ equals  any of the path lattice cohomologies
$\bH^0(\gamma,w)$ associated with a certain  increasing  path.
(E.g., in the mentioned Example 4.3.3  we have $\bH^1(R,w)\not=0$, a fact which does not hold for any
path lattice cohomology.) However, amazingly, all the Euler characteristics agree.
\end{remark}


\begin{theorem}\label{th:comblattice}
Assume that $h$ satisfies the stability property, and the pair $(h,h^\circ)$
satisfies the Combinatorial Duality  Property. Then the following facts hold.

\noindent (a) \  $(h,h^\circ)$
satisfies both the path eu- and the eu-coincidence properties:  for any increasing $\gamma$  we have
$$eu(\bH^*(\gamma,w))=eu(\bH^*(R,w))=h^\circ (0)-h^\circ (c).$$
(b)
$$\sum_{l\geq 0}\,\sum _I\, (-1)^{|I|+1} w((l,I))\, \bt^{l}=
\sum_{l\geq 0}\,\sum _I\, (-1)^{|I|+1} h(l+E_I)\,\bt^{l}.$$
\end{theorem}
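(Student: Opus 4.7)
My plan is to establish a pointwise identity relating $w_q((l,I))$ to values of $h$ and $h^\circ$, and to use it as the common engine to deduce (b) and the eu-coincidence in (a). The path eu-coincidence, in contrast, comes essentially for free from Lemma~\ref{lem:comblat}: that lemma characterizes the equality $eu(\bH^0(\gamma,w))=h^\circ(0)-h^\circ(c)$ as the condition that at every step $x_i\to x_{i+1}=x_i+E_{v(i)}$ of the increasing path, the differences $h(x_{i+1})-h(x_i)$ and $h^\circ(x_i)-h^\circ(x_{i+1})$ are not simultaneously nonzero. This is exactly CDP applied to the single-step edges $(l,l+E_v)\subseteq R$, so the equality is immediate for every increasing $\gamma$.

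The heart of the proof is the pointwise identity $w_q((l,I))=h(l+E_I)+h^\circ(l)-h^\circ(0)$ for every cube $(l,I)\subset R$. The upper bound follows from monotonicity alone: for any $I'\subseteq I$, $h(l+E_{I'})\le h(l+E_I)$ and $h^\circ(l+E_{I'})\le h^\circ(l)$, hence $w_0(l+E_{I'})\le h(l+E_I)+h^\circ(l)-h^\circ(0)$. For the matching lower bound I will exhibit a vertex achieving it. Take $I^*\subseteq I$ to be a \emph{minimal} subset with $h(l+E_{I^*})=h(l+E_I)$. Stability is essential here: for any ordering $v_1,\dots,v_k$ of $I^*$, if $h(l+E_{v_1}+\cdots+E_{v_j})=h(l+E_{v_1}+\cdots+E_{v_{j-1}})$ for some $j$, then applying stability at $l':=l+E_{v_1}+\cdots+E_{v_{j-1}}$ with $\bar l:=E_{v_{j+1}}+\cdots+E_{v_k}$ would yield $h(l+E_{I^*})=h(l+E_{I^*\setminus\{v_j\}})$, contradicting minimality. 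Hence $h$ strictly increases at every step along this ordered path, and CDP then forces $h^\circ$ to remain constant at each step, giving $h^\circ(l+E_{I^*})=h^\circ(l)$. Consequently $w_0(l+E_{I^*})=h(l+E_I)+h^\circ(l)-h^\circ(0)$, matching the upper bound.

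Granting the pointwise identity, (b) follows by substitution into the coefficient of $\bt^l$ on the LHS: the $(h^\circ(l)-h^\circ(0))$ piece acquires a factor $\sum_{I\subseteq J(l)}(-1)^{|I|+1}$ with $J(l)=\{v:l+E_v\le c\}$, which vanishes whenever $J(l)$ is nonempty by the binomial identity $\sum_{I\subseteq J}(-1)^{|I|}=0$; the remaining $h(l+E_I)$ terms match the RHS. For the eu-coincidence in (a), apply Lemma~\ref{bek:LCSW} with $T=R$ to write $eu(\bH^*(R,w))=\sum_{\square_q\subset R}(-1)^{q+1}w_q(\square_q)$, substitute the pointwise identity, reindex $m=l+E_I$, and apply the same Möbius identity twice: the $h$-part collapses to $-h(0)=0$, the $h^\circ(0)$ contribution survives only at $l=c$ and contributes $h^\circ(0)$, and the $h^\circ(l)$ contribution survives only at $l=c$ and contributes $-h^\circ(c)$, yielding $h^\circ(0)-h^\circ(c)$, in agreement with the path eu-coincidence.

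The principal obstacle is the construction of $I^*$ and verifying its strict-increase property. Without stability, the contradiction argument ruling out constant-$h$ steps along a minimal ordering breaks down, and without that step one cannot use CDP to transfer control from $h$ to $h^\circ$ and secure the lower bound. Stability is precisely the combinatorial input guaranteeing that the pattern of $h$-increase is consistent across the whole cube, which is what makes the CDP-driven bookkeeping of $h^\circ$ work.
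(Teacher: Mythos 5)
Your proof is correct, and the engine you identify---the pointwise cube-weight identity $w_q((l,I))=h(l+E_I)+h^\circ(l)-h^\circ(0)$, proved via a minimal $I^*\subseteq I$ using stability to force strict $h$-increase along an ordered path and CDP to freeze $h^\circ$---is exactly the right mechanism; the paper itself gives no proof here (it reviews the statement from \cite{AgNe1}), but this is the approach taken there. A couple of small observations worth recording. First, the verification that $h$ strictly increases along the ordered path through $I^*$ needs the case $j=k$ (where $\bar l=0$) to be handled by minimality alone rather than by stability; you implicitly do this, but it is worth stating, since stability is vacuous when $\bar l=0$. Second, in the reindexing for the $eu$-coincidence and in part (b), the M\"obius cancellation $\sum_{I\subseteq J}(-1)^{|I|+1}=0$ requires the relevant index set $J(l)=\{v:l+E_v\le c\}$ (resp.\ $K(m)=\{v:m_v\ge 1\}$) to be nonempty; for a finite $c$ the surviving boundary term at $l=c$ (resp.\ $m=0$) is precisely what produces $h^\circ(0)-h^\circ(c)$ in (a), while in (b) the identity is cleanest read as the formal power series statement with $c=\infty$ (as it is used in \ref{ss:Poinc}), where $J(l)=\calv$ is always nonempty so the cross-terms vanish identically. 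Your bookkeeping of these boundary contributions is correct.
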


\section{Analytic lattice cohomology of isolated singularities} \label{s:AnCohIs}

\subsection{Some analytic properties of isolated singularities}\label{ss:AnPrel}

\bekezdes\label{bek:PG}
 Let $(X,o)$ be an irreducible   isolated  singularity of dimension $n\geq 2$. Usually we fix a (small)
  representative
 $X$  such that  it is a contractible Stein space.
 We fix a good resolution $\phi:\tX\to X$. Set $E=\phi^{-1}(o)$ for the irreducible set, let
 $E=\cup_{v\in\cV}E_v$ be its irreducible decomposition.

 \begin{theorem} \label{th:GR} \ \cite{GrRie} {\bf (Grauert--Riemenschneider Theorem)}
 $R^i\phi_*\Omega^n_{\tX}=0$  for $i>0$.
 \end{theorem}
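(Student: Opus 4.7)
The plan is to reduce the sheaf-level vanishing to a global cohomology statement on $\tX$, and then to invoke a Kodaira--Akizuki--Nakano-type vanishing (or equivalently, the $L^2$-estimates that power the original Grauert--Riemenschneider argument).

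The reduction is formal. Take $X$ to be a small contractible Stein neighborhood of $o$. By Grauert's coherence theorem the sheaves $R^i\phi_*\Omega^n_{\tX}$ are coherent $\calO_X$-modules, and since $\phi$ is an isomorphism outside $o$, for $i>0$ they are supported at $o$, hence vanish iff their stalks at $o$ do. The Leray spectral sequence $E_2^{p,q}=H^p(X, R^q\phi_*\Omega^n_{\tX}) \Rightarrow H^{p+q}(\tX, \Omega^n_{\tX})$ collapses onto the $p=0$ column, because $X$ is Stein and the direct image sheaves are coherent (Cartan's Theorem B). Hence $(R^i\phi_*\Omega^n_{\tX})_o=H^0(X,R^i\phi_*\Omega^n_{\tX})=H^i(\tX,\Omega^n_{\tX})$, and the statement reduces to showing $H^i(\tX,\Omega^n_{\tX})=0$ for all $i>0$.

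For this vanishing my first plan would be to compactify. Via Artin algebraization (or a direct complex-analytic compactification) combined with Hironaka's embedded resolution, realize $\tX$ as a Zariski open subset of a smooth projective variety $\bar Y$ with simple normal crossings boundary $D=\bar Y\setminus \tX$. The Akizuki--Nakano vanishing theorem for logarithmic forms then gives
$$
H^q\bigl(\bar Y,\,\Omega^n_{\bar Y}(\log D)\otimes\calO_{\bar Y}(-L)\bigr)=0\qquad (q>0)
$$
for every sufficiently ample $L$. A residue sequence, combined with a limiting argument in which the twist $L$ is chosen to absorb the contribution of $D$, extracts the target vanishing on the open part $\tX$. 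An alternative route---the original one of Grauert--Riemenschneider---equips $\tX$ with a complete K\"ahler metric coming from a plurisubharmonic exhaustion of $X$ and a semi-positive Hermitian metric on $K_{\tX}$, and applies the Andreotti--Vesentini / H\"ormander $L^2$-estimate for $\bar\partial$ to conclude that the $L^2$-Dolbeault groups $H^{n,i}_{(2)}(\tX)$ vanish for $i\geq 1$.

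The main obstacle is the final identification step in either route. In the compactification approach one must carefully pass from the logarithmic cohomology on $\bar Y$ to the ordinary $H^i(\tX,\Omega^n_{\tX})$, via residue sequences combined with a limiting argument that removes the twist by $D$. In the $L^2$ approach the delicate step is to show that $L^2$-holomorphic top forms on $\tX$ coincide with coherent sections of $\Omega^n_{\tX}$; this is a Skoda/Hartogs-type extension statement across the exceptional divisor $E$ and is the analytic heart of the original proof.
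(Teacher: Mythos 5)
The paper does not prove this statement: it is cited verbatim from \cite{GrRie} as a black box, so there is no ``paper's own proof'' to compare against. Evaluating your sketch on its own terms:

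Your reduction from sheaf vanishing to the global statement $H^i(\tX,\Omega^n_{\tX})=0$ for $i>0$ is correct and standard. Since $\phi$ is biholomorphic off $E$, the higher direct images are coherent (Grauert) and supported at $o$; with $X$ Stein the Leray spectral sequence degenerates and the stalk at $o$ is $H^i(\tX,\Omega^n_{\tX})$. That part is fine.

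For the vanishing itself you offer two routes but carry out neither, and you are forthright that the ``final identification step'' is the obstacle. Two concrete concerns. First, the compactification route via Artin algebraization is not available in the generality the paper requires: $(X,o)$ here is an arbitrary isolated complex-analytic germ, not assumed algebraic, so there is in general no projective $\bar Y$ containing $\tX$ as a Zariski-open subset. Even in the algebraic case, the line bundle one has to work with is the pull-back of an ample bundle from $X$, which is only semi-ample and big on $\tX$, so the correct input is Kawamata--Viehweg vanishing for nef and big divisors rather than Akizuki--Nakano for ample twists; the ``residue sequence $+$ limiting argument'' you invoke to strip off $D$ and the twist is precisely the step that is not formal. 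Second, in the $L^2$ route, the identification of $L^2$-Dolbeault cohomology in bidegree $(n,i)$ with coherent cohomology of $\Omega^n_{\tX}$ on a $1$-convex manifold is the analytic heart of Grauert--Riemenschneider's original argument, and it is not a Hartogs-type extension so much as a comparison between $L^2$ and \v{C}ech/Dolbeault complexes with estimates. So the proposal is a correct roadmap with the reduction done, but the essential content of the theorem is flagged rather than supplied; and one of the two proposed detours (algebraization) does not apply in the analytic setting the paper is working in.
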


 If $N\subset \tX$ is a (conveniently small)
 strictly Levi pseudoconvex neighborhood  of $E$ then $H^{n-1}(N, \calO)$ is finite dimensional by
 \cite[Th. IX,B.6]{GuRo}. Furthermore, the restriction $H^{n-1}(\tX,\calO_{\tX})\to
 H^{n-1}(N, \calO_N)$ is an isomorphism \cite[Lemma 3.1]{Laufer72}. In particular,
 $H^{n-1}(\tX, \calO_{\tx})$ is finite dimensional, and we can assume that $\tX$ is a
 strictly Levi pseudoconvex neighborhood of $E$ (as $N$ above).

 \begin{theorem}\label{th:ext} \ \cite{Laufer72,YauTwo}
 $R^{n-1}\phi_*(\calO_{\tX})_o\simeq H^{n-1}(\tX,\calO_{\tX})$
 is dual as a $\C$-vector space with
 $H^0(\tX\setminus E, \Omega^{n}_{\tX})/H^0(\tX, \Omega^{n}_{\tX})$.
  \end{theorem}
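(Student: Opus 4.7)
The isomorphism $R^{n-1}\phi_*(\calO_{\tX})_o\simeq H^{n-1}(\tX,\calO_{\tX})$ is, in my view, the easier half: it follows from Leray combined with Cartan's Theorem~B on the Stein representative $X$ and the Laufer lemma cited in \ref{bek:PG}, which guarantees that the cohomology of $\calO$ on strictly Levi pseudoconvex neighborhoods of $E$ stabilizes. So the substantive content is the duality with $H^0(\tX\setminus E,\Omega^n_{\tX})/H^0(\tX,\Omega^n_{\tX})$.

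First, I would invoke Grauert--Riemenschneider (Theorem~\ref{th:GR}) together with $X$ Stein: since $R^i\phi_*\Omega^n_{\tX}=0$ for $i>0$ and $H^p(X,\phi_*\Omega^n_{\tX})=0$ for $p>0$ by Cartan~B, the Leray spectral sequence for $\phi_*\Omega^n_{\tX}$ collapses and yields $H^i(\tX,\Omega^n_{\tX})=0$ for all $i>0$. I would then write down the long exact sequence of local cohomology along $E$ for the sheaf $\Omega^n_{\tX}$:
\[
0\to H^0_E(\tX,\Omega^n_{\tX})\to H^0(\tX,\Omega^n_{\tX})\to H^0(\tX\setminus E,\Omega^n_{\tX})\to H^1_E(\tX,\Omega^n_{\tX})\to H^1(\tX,\Omega^n_{\tX})=0.
\]
Since $\Omega^n_{\tX}$ is locally free on the smooth $\tX$, $H^0_E(\tX,\Omega^n_{\tX})=0$, and from the above one extracts the identification
\[
H^1_E(\tX,\Omega^n_{\tX})\;\cong\; H^0(\tX\setminus E,\Omega^n_{\tX})\big/H^0(\tX,\Omega^n_{\tX}).
\]

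The decisive remaining step is the Serre-type duality
\[
H^{n-1}(\tX,\calO_{\tX})\;\cong\;\Hom_{\C}\!\bigl(H^1_E(\tX,\Omega^n_{\tX}),\,\C\bigr).
\]
The pairing I have in mind is cup product followed by the Grothendieck residue: represent a class on the left by a \v{C}ech cocycle for a Stein cover $\{U_\alpha\}$ of $\tX$, a class on the right by a holomorphic $n$-form on $\tX\setminus E$ modulo a form that extends across $E$, take the cup to land in $H^n_E(\tX,\Omega^n_{\tX})$, and trace down to $\C$. The main obstacle is non-degeneracy. The cleanest route I see is to pass to the formal completion $\widehat{\calO}$ of $\calO_{\tX}$ along $E$: by a higher-dimensional extension of Laufer's comparison theorem the natural map $H^{n-1}(\tX,\calO_{\tX})\to H^{n-1}(E,\widehat{\calO})$ should be an isomorphism, and one then invokes Grothendieck's formal local duality on the compact $(n{-}1)$-dimensional set $E$ sitting inside the smooth $n$-dimensional $\tX$, which pairs $H^{n-1}(E,\widehat{\calO})$ perfectly with $H^1_E(\tX,\Omega^n_{\tX})$. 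The finite-dimensionality on both sides required for perfectness follows from Andreotti--Grauert (cited in \ref{bek:PG}) combined with the Grauert--Riemenschneider vanishing used in the first step.
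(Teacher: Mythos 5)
Your argument is sound and, in fact, the two facts you isolate are exactly the right reduction. Since the paper cites this theorem to Laufer and Yau rather than proving it, let me compare to those sources. Your first two steps are clean and standard: Grauert--Riemenschneider plus Leray (over the Stein representative $X$) gives $H^i(\tX,\Omega^n_{\tX})=0$ for $i>0$, and then the local cohomology exact sequence identifies $H^1_E(\tX,\Omega^n_{\tX})$ with the quotient $H^0(\tX\setminus E,\Omega^n_{\tX})/H^0(\tX,\Omega^n_{\tX})$, using that $H^0_E=0$ for a torsion-free sheaf. The remaining duality $H^{n-1}(\tX,\calO_{\tX})^*\simeq H^1_E(\tX,\Omega^n_{\tX})$ is where your route diverges from the cited proofs: Laufer (in dimension $2$) and Yau (in higher dimension) work purely analytically, with $\tX$ a strictly Levi pseudoconvex neighborhood of $E$ and the duality being Serre duality for $1$-convex manifolds (with compact supports / supports in $E$), whereas you propose the algebraic route of formal comparison plus Hartshorne-type formal duality along the proper subscheme $E$. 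Your route is valid: the "higher-dimensional extension of Laufer's comparison theorem" you hedge on is exactly the Theorem on Formal Functions (which the paper itself invokes in §4.1 for $H^i(c,\calO_c)\simeq H^i(\tX,\calO_{\tX})$), and it gives $H^{n-1}(\tX,\calO_{\tX})\simeq \varprojlim_k H^{n-1}(E_k,\calO_{E_k})=H^{n-1}(\hat E,\hat\calO)$ since the $R^{n-1}\phi_*\calO$ stalk is already $\mathfrak m$-adically complete (it is a finite-dimensional $\C$-space supported at $o$). Hartshorne's formal duality for a proper closed $E$ in a smooth $n$-fold then pairs $H^{n-1}(\hat E,\hat\calO)$ perfectly with $H^1_E(\tX,\omega_{\tX})$. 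The one thing to keep in mind is that Hartshorne's theorem is stated algebraically; in the complex-analytic category you should either pass to an algebraic model of the resolution or cite the analytic version of formal duality (e.g. Banica--Stanasila, or Ramis--Ruget), which is essentially what Yau's Serre-duality argument supplies. What each route buys: Yau's analytic approach is self-contained within the pseudoconvex setting and does not require a comparison step; your approach is cleaner notationally, makes the residue pairing explicit, and situates the statement inside standard coherent duality.
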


We write $K_{\tX}$ for the canonical divisor, that is, $\Omega^n_{\tX}\simeq\calO_{\tX}(K_{\tX})$.

We set
 $L=H_{2n-2}(\tX, \Z)=H_{2n-2}(E,\Z)$. It is a free $\Z$-module generated by the classes of
$\{E_v\}_v$. We identify it with the group of Weil divisors supported on $E$, hence any $l\in L$ has the form
$l=\sum_v n_vE_v$ with $n_v\in \Z$. We write $l\in L_{\geq 0}$ if $l$ is effective ($n_v\geq 0$ for all $v$), and
$l\in L_{>0}$ if $l$ is non-zero effective.

\begin{theorem}\label{th:Serre} One has the {\bf Serre Duality} isomorphism:
$H^0(l, \calO_{\tX}(K_{\tX}+l))=H^{n-1}(\calO_l)^*$ for any $l\in L_{>0}$.
\end{theorem}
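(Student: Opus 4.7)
The plan is to view the statement as an instance of Serre-Grothendieck duality applied to the analytic subspace $l\subset \tX$, with dualizing sheaf computed by adjunction. First I would observe that the effective divisor $l=\sum_v n_vE_v$ cuts out a closed analytic subspace of $\tX$ whose ideal sheaf is the line bundle $\calO_{\tX}(-l)$. Since $\tX$ is smooth and $\calO_{\tX}(-l)$ is locally principal, this subspace is a local complete intersection, hence Cohen-Macaulay of pure dimension $n-1$. Its support lies in $|E|$, which is compact, so $l$ is a proper analytic space over $\C$.

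Next, adjunction for the Cartier divisor $l$ inside the smooth $n$-fold $\tX$ identifies the dualizing sheaf of $l$ with
\[
\omega_l \;=\; \bigl(\omega_{\tX}\otimes \calO_{\tX}(l)\bigr)|_l \;=\; \calO_{\tX}(K_{\tX}+l)|_l,
\]
and Serre duality (the Ramis-Ruget theorem for proper Cohen-Macaulay complex analytic spaces) applied to $l$ with coefficient sheaf $\calO_l$ then yields
\[
H^{n-1}(l,\calO_l)^* \;\cong\; H^0(l,\omega_l) \;=\; H^0\bigl(l,\calO_{\tX}(K_{\tX}+l)\bigr),
\]
which is exactly the asserted identity.

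The main obstacle is the appeal to analytic Serre duality in this non-projective setting; the Ramis-Ruget theorem covers it precisely because $l$ is a proper Cohen-Macaulay analytic space, but it is not entirely elementary. A self-contained alternative, using only the tools already quoted, would combine the two short exact sequences
\[
0\to\calO_{\tX}(-l)\to \calO_{\tX}\to \calO_l\to 0 \quad\text{and}\quad 0\to \Omega^n_{\tX}\to \Omega^n_{\tX}(l)\to \omega_l\to 0
\]
as follows: apply Grauert-Riemenschneider (Theorem \ref{th:GR}) together with the Stein property of $X$ to get $H^1(\tX,\Omega^n_{\tX})=0$, and hence $H^0(l,\omega_l)=H^0(\tX,\Omega^n_{\tX}(l))/H^0(\tX,\Omega^n_{\tX})$; use the fact that the fibers of $\phi$ have dimension at most $n-1$, so $R^n\phi_*\calO_{\tX}(-l)=0$ and therefore $H^n(\tX,\calO_{\tX}(-l))=0$, which makes $H^{n-1}(\calO_l)$ the cokernel of $H^{n-1}(\calO_{\tX}(-l))\to H^{n-1}(\calO_{\tX})$; and finally invoke the line-bundle extension of Theorem \ref{th:ext} applied to $\calO_{\tX}(-l)$ (noting that $\Omega^n_{\tX}(l)$ and $\Omega^n_{\tX}$ agree off $|E|$) to identify the dual of this cokernel with the previous quotient.
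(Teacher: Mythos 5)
The paper states this result without proof, treating it as the standard Serre duality isomorphism for the compact Cohen--Macaulay analytic subspace cut out by the effective Cartier divisor $l$; your first argument is exactly the expected fill-in behind that black box, and it is correct: $l$ is a locally complete intersection of pure dimension $n-1$ with compact support, adjunction identifies $\omega_l$ with $\calO_{\tX}(K_{\tX}+l)|_l$, and analytic Serre--Grothendieck duality (Ramis--Ruget) gives $H^{n-1}(l,\calO_l)^*\cong H^0(l,\omega_l)$. Your second ``self-contained'' alternative is also sound, but be aware that it \emph{inverts} the paper's logical order: the paper deduces \eqref{eq:lL} from Theorem \ref{th:Serre}, whereas you would first establish \eqref{eq:lL} by a line-bundle extension of Theorem \ref{th:ext} (which is valid --- Laufer's and Yau's arguments go through verbatim with $\calO_{\tX}$ replaced by a line bundle --- but is genuinely more than what the paper's Theorem \ref{th:ext} literally states), and then recover Theorem \ref{th:Serre} via the form sequence and $H^1(\tX,\Omega^n_{\tX})=0$. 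What this alternative buys you is that it stays entirely within the $\phi$-relative framework already used by the paper (fiber-dimension vanishing, Grauert--Riemenschneider, pseudoconvex duality) and avoids invoking an abstract duality theorem for proper analytic spaces; the cost is the extra citation and the need to check functoriality of the Laufer--Yau pairing to pass from objects to the cokernel/kernel identification.
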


If $c\in L_{>0}$ with $c\gg0$ (i.e., $n_v\gg 0$ for all $v$), then $H^i(c, \calO_c)\simeq H^i(\tX, \calO_{\tX})$ for $i>0$ by
Formal Function Theorem  \cite{Hartshorne}.
Similarly, for $c\gg0$ we also have $H^0(\tX, \Omega ^n_{\tX}(c))=H^0(\tX\setminus E, \Omega^n_{\tX})$, and
\begin{equation}\label{eq:c}
H^{n-1}(\tX,\calO_{\tX})^*\simeq H^0(\tX, \Omega^n_{\tX}(c))/H^0(\tX,\Omega^n_{\tX}).
\end{equation}
More generally, for any $l> 0$ we have
\begin{equation}\label{eq:lL}
H^{n-1}(l,\calO_{l})^*\simeq H^0(\tX, \Omega^n_{\tX}(l))/H^0(\tX,\Omega^n_{\tX}).
\end{equation}
Indeed, using the exact sequence of sheaves $0\to \Omega^n_{\tX}\to \Omega^n_{\tX}(l)\to
\Omega^n_{\tX}(l)|_l\to 0$ and the Grauert--Riemenschneider vanishing we obtain that
$H^0(\tX, \Omega^n_{\tX}(l))/H^0(\tX,\Omega^n_{\tX})=H^0(l,\Omega^n_{\tX}(l))$, which is Serre dual with
$H^{n-1}(\calO_l)$.

Next, we define the Hilbert function associated with the divisorial filtration of $\calO_{X,o}$ (or of
$H^0(\tX, \calO_{\tX})$):  for any $l\in L_{\geq 0}$ set
\begin{equation}\label{eq:Hilb}
  \hh(l)=\dim \frac{H^0(\tX,\calO_{\tX})}{H^0(\tX,\calO_{\tX}(-l))}.\end{equation}
Then  $\hh$ is increasing (that is, $\hh(l_1)\geq \hh(l_2)$ whenever $l_1\geq l_2$) and  $\hh(0)=0$.

We also define another numerical invariant for any $l\geq 0$, namely
\begin{equation}\label{eq:hkor}
\hh^\circ(l)  =\dim \frac{H^0(\tX\setminus E, \Omega^n_{\tX})}{H^0(\tX, \Omega^n _{\tX}(l))}.
\end{equation}
 Then  $\hh^\circ$ is decreasing,
 $\hh^\circ (0)=h^{n-1}(\calO_{\tX})$  and $\hh^\circ (c)=0$ for $c\gg 0$.

Since $\hh$ is induced by a filtration, it satisfies the {\it matroid rank inequality}
\begin{equation}\label{eq:matroid}
\hh(l_1)+\hh(l_2)\geq  \hh(\overline{l})+\hh(l),\end{equation}
where $l=\min\{l_1, l_2\}$ and $\overline{l}=\max\{l_1,l_2\}$.
By the very same reason $l\mapsto \hh^\circ(-l)$ (hence
$l\mapsto \hh^\circ(l)$ too) satisfies the matroid rank inequality
\begin{equation}\label{eq:matroid2}
\hh^\circ (l_1)+\hh^\circ(l_2)\geq  \hh^\circ(\overline{l})+\hh^\circ(l).\end{equation}
From (\ref{eq:lL}) we obtain that \begin{equation}\label{eq:hh}
\hh^\circ(l)=h^{n-1}(\calO_{\tX})-h^{n-1}(\calO_l).\end{equation}
This shows that $l\mapsto h^{n-1}(\calO_l)$ satisfies the
{\it `opposite' matroid rank inequality}
 \begin{equation}\label{eq:matroid3}
h^{n-1}(\calO_{l_1})+h^{n-1}(\calO_{l_2})\leq  h^{n-1}(\calO_{\overline{l}})+h^{n-1}(\calO_{l}).\end{equation}
Recall that $l\mapsto h^{n-1}(\calO_l)$ is increasing and its stabilized value (for $l\gg 0$) is $h^{n-1}(\calO_{\tX})$.
\begin{proposition}\label{prop:cohcyc} {\bf (Existence of the cohomology cycle)}
Assume that $h^{n-1}(\calO_{\tX})\not=0$. Then there exists a unique minimal cycle $Z_{coh}>0$ such that
$h^{n-1}(\calO_{\tX})=h^{n-1}(\calO_{Z_{coh}})$. The cycle $Z_{coh}$ has the property that for any $l\not\geq Z_{coh}$
one has $h^{n-1}(\calO_l)<h^{n-1}(\calO_{\tX})$.
\end{proposition}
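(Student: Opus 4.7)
The plan is to define the set of ``maximal'' cycles
$$\mathcal{M} := \set{l \in L_{\geq 0}}{h^{n-1}(\calO_l) = h^{n-1}(\calO_{\tX})}$$
and show that it has a unique minimum, which will be $Z_{coh}$. The key tool will be the ``opposite'' matroid rank inequality~(\ref{eq:matroid3}), which we get essentially for free from the fact that $\hh^\circ$ comes from a filtration.

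First I would check that $\mathcal{M}$ is nonempty. By the Formal Function Theorem cited just before~(\ref{eq:c}), one has $H^{n-1}(\calO_c)\simeq H^{n-1}(\calO_{\tX})$ for any $c\in L$ with $c\gg 0$, so such a $c$ lies in $\mathcal{M}$. Next, I would show that $\mathcal{M}$ is closed under coordinatewise minimum. Given $l_1, l_2 \in \mathcal{M}$, set $l := \min(l_1,l_2)$ and $\bar l := \max(l_1,l_2)$. Inequality~(\ref{eq:matroid3}) gives
$$h^{n-1}(\calO_{\tX}) + h^{n-1}(\calO_{\tX}) = h^{n-1}(\calO_{l_1}) + h^{n-1}(\calO_{l_2}) \leq h^{n-1}(\calO_{\bar l}) + h^{n-1}(\calO_{l}).$$
Since $l\mapsto h^{n-1}(\calO_l)$ is increasing with $h^{n-1}(\calO_{\tX})$ as its stable value, both terms on the right are at most $h^{n-1}(\calO_{\tX})$, so equality must hold in each; thus $l \in \mathcal{M}$.

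Now to produce a unique minimum of $\mathcal{M}$, I would argue by descent on the coefficient sum $\sum_v n_v$. Pick any $l_0\in\mathcal M$. If some $l'\in\mathcal M$ is not $\geq l_0$, then $\min(l_0,l')\in\mathcal M$ has strictly smaller coefficient sum. Since the coefficients are nonnegative integers, this process terminates at some $Z_{coh}\in\mathcal M$ which is $\leq $ every element of $\mathcal{M}$, hence unique. The assumption $h^{n-1}(\calO_{\tX})\neq 0$ forces $Z_{coh}\neq 0$ (as $h^{n-1}(\calO_0)=0$), so $Z_{coh}>0$.

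For the final assertion, suppose $l \not\geq Z_{coh}$ yet $h^{n-1}(\calO_l) = h^{n-1}(\calO_{\tX})$. Then $l \in \mathcal{M}$, so by the closure property above, $\min(l, Z_{coh}) \in \mathcal{M}$; but $\min(l,Z_{coh})\leq Z_{coh}$ with strict inequality in some coordinate (because $l \not\geq Z_{coh}$), contradicting the minimality of $Z_{coh}$. The only genuine obstacle here is to verify the opposite matroid rank inequality applies to all relevant $l_i$ (including boundary cases with some zero coordinates, where one interprets $h^{n-1}(\calO_0)=0$); once this is granted, the argument is purely formal.
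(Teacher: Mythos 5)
Your proof is correct and takes essentially the same route the paper indicates (the paper's proof is a one-liner citing the opposite matroid rank inequality~(\ref{eq:matroid3}) and the classical surface-case argument from Reid's notes, which is precisely the closure-under-minimum descent you spell out). The only thing worth flagging is that you explicitly note and handle the boundary convention $h^{n-1}(\calO_0)=0$, which is implicit in the paper; this is needed both for~(\ref{eq:matroid3}) at $l=0$ and to conclude $Z_{coh}>0$.
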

\begin{proof}
Use  (\ref{eq:matroid3}). In fact, the proof of the existence of the cohomology cycle for surface singularities
from \cite{MR} can also be adapted (which proves the opposite matroid ineqaulity as well).
\end{proof}
If $h^{n-1}(\calO_{\tX})=0$ then we define $Z_{coh}$ as the zero cycle.
\begin{corollary}\label{cor:matr}
For any $l>0$ one has $h^{n-1}(\calO_l)=h^{n-1}(\calO_{\min\{l,Z_{coh}\}})$.
\end{corollary}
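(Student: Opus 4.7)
The plan is to combine the opposite matroid rank inequality (\ref{eq:matroid3}) with the monotonicity of $l \mapsto h^{n-1}(\calO_l)$ and the stabilizing property of the cohomology cycle from Proposition~\ref{prop:cohcyc}. The case $h^{n-1}(\calO_{\tX})=0$ is trivial: by the convention just stated, $Z_{coh}=0$, so $\min\{l,Z_{coh}\}=0$, and both sides vanish because $0\leq h^{n-1}(\calO_l)\leq h^{n-1}(\calO_{\tX})=0$ by monotonicity and stabilization. So I assume $h^{n-1}(\calO_{\tX})>0$.

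The key step is to apply the opposite matroid rank inequality (\ref{eq:matroid3}) to the pair $(l_1,l_2)=(l,Z_{coh})$. Writing $m=\min\{l,Z_{coh}\}$ and $M=\max\{l,Z_{coh}\}$, this yields
\[
h^{n-1}(\calO_l)+h^{n-1}(\calO_{Z_{coh}})\;\leq\; h^{n-1}(\calO_M)+h^{n-1}(\calO_m).
\]
Since $M\geq Z_{coh}$, monotonicity gives $h^{n-1}(\calO_M)\geq h^{n-1}(\calO_{Z_{coh}})=h^{n-1}(\calO_{\tX})$, while stabilization forces $h^{n-1}(\calO_M)\leq h^{n-1}(\calO_{\tX})$; hence $h^{n-1}(\calO_M)=h^{n-1}(\calO_{Z_{coh}})$. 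Cancelling this term from both sides of the displayed inequality leaves
\[
h^{n-1}(\calO_l)\;\leq\; h^{n-1}(\calO_m).
\]

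Finally, the reverse inequality $h^{n-1}(\calO_m)\leq h^{n-1}(\calO_l)$ is immediate from $m\leq l$ and the monotonicity of $l\mapsto h^{n-1}(\calO_l)$, which closes the argument. There is no real obstacle: once one identifies $l$ and $Z_{coh}$ as the correct pair to substitute into (\ref{eq:matroid3}), the statement reduces to a short formal manipulation. The only subtle point is the boundary case $h^{n-1}(\calO_{\tX})=0$, handled separately above so that $Z_{coh}=0$ does not create an issue with the $\min$.
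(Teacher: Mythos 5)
Your proof is correct and follows exactly the route the paper indicates: apply the opposite matroid inequality (\ref{eq:matroid3}) to the pair $(l,Z_{coh})$, cancel the term $h^{n-1}(\calO_{\max\{l,Z_{coh}\}})=h^{n-1}(\calO_{Z_{coh}})=h^{n-1}(\calO_{\tX})$ (using that $h^{n-1}(\calO_\cdot)$ is increasing and stabilizes at $h^{n-1}(\calO_{\tX})$), and close with monotonicity for the reverse inequality. Your explicit treatment of the degenerate case $h^{n-1}(\calO_{\tX})=0$ (where $Z_{coh}=0$ by convention) is a reasonable bit of bookkeeping the paper leaves implicit.
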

\begin{proof}
Use the monotonicity of $h^{n-1}(\calO_l)$ and the opposite matroid inequality for
$l$ and $Z_{coh}$.
\end{proof}

\bekezdes It is well-known that both $h^{n-1}(\calO_{\tX})$ and $h^{n-1}(\calO_E)$ are independent of the choice of the resolution, they depend only on $(X,o)$. Moreover, the natural map
$H^{n-1}(\calO_{\tX})\to H^{n-1}(\calO_E)$ is surjective \cite[(2.14)]{Steen}.

\begin{example}\label{ex:Gorenstein}
{\bf Assume that $(X,o)$ is Gorenstein.} Then,  for any good resolution $\tX\to X$ there exists $Z_K\in L$ such that
$\Omega_{\tX}^n=\calO_{\tX}(-Z_K)$. Let us write $Z_K$ as $Z_{K,+}+Z_{K,-}$, where
$Z_{K,+}, -Z_{K,-}\in L_{\geq 0}$, and in their support there is no common $E_v$. E.g., if $(X,o)$ is rational
then $Z_{K,+}=0$. Ishii in \cite[3.7]{Ishii} proved that $Z_{K,+}\geq Z_{coh}$.

A good resolution is called `essential' if $Z_{K,-}=0$. For surface singularities essential good resolutions
exist (e.g. the minimal good resolution is such). However, in higher dimensions there are singularities
without any essential good resolutions.

Recall that in general  $\hh^\circ (l)=\dim H^0(\Omega^n _{\tX}(c))/H^0(\tX, \Omega^n _{\tX}(l))$,
valid for any $c\geq Z_{coh}$. Now, in the Gorenstein case,
if $\tX$ is an essential good resolution (i.e. $Z_K\in L_{\geq 0}$), then the previous expression for
$\hh^\circ$ transforms for $c=Z_{K}$ into
$\hh^\circ(l)=\dim H^0(\calO_{\tX})/H^0(\calO_{\tX}(-Z_K+l))$
for ant $0\leq l\leq Z_K$. Hence $\hh^\circ (l)=\hh(Z_K-l)$.
That is, $\hh^\circ$ is the symmetrized $\hh$ with respect to $Z_K\geq 0$.
\end{example}

\subsection{The analytic lattice cohomology associated with $\phi$}\label{ss:ALCc}
Let us fix some $c\geq Z_{coh}$ and we consider the rectangle $R(0,c)$. We also define the weight function on the
lattice points of $R(0,c)$ by
$$w_0(l):= \hh(l)+\hh^\circ (l)-\hh^\circ (0)=\hh(l)-h^{n-1}(\calO_l).$$
By the above discussions we obtain that $w_0$  satisfies the matroid rank inequality.

\begin{lemma}\label{lem:finiteness} Consider the case $c=\infty$, and $w_0:L_{\geq 0}\to \Z$ defined
as in \ref{ss:ALCc}. Then $w_0$ satisfies (\ref{9weight}), namely
 $w_0^{-1}(\,(-\infty,n]\,)$ is finite for any  $n\in\Z$.
\end{lemma}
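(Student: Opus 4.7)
The plan is to reduce the finiteness statement to one about the Hilbert function $\hh$ alone, and then to verify that reduced statement via the divisorial valuation.

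First I would bound $w_0$ from below in terms of $\hh$. By \ref{ss:AnPrel}, $\hh^\circ$ is decreasing, so $0 \leq \hh^\circ(l) \leq \hh^\circ(0) = h^{n-1}(\calO_{\tX}) =: p_g$, giving
$$ w_0(l) = \hh(l) + \bigl(\hh^\circ(l) - \hh^\circ(0)\bigr) \geq \hh(l) - p_g. $$
Therefore $w_0^{-1}\bigl((-\infty, n]\bigr) \subset \{\,l \in L_{\geq 0} : \hh(l) \leq n + p_g\,\}$, and it suffices to show that $I_M := \{\hh \leq M\}$ is finite for every $M \in \Z$.

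Next I would use the order structure. Since $\hh$ is non-decreasing on $L_{\geq 0} \cong \Z_{\geq 0}^{|\calV|}$, the set $I_M$ is an order ideal. An order ideal in $\Z_{\geq 0}^{s}$ is finite precisely when its intersection with each coordinate ray $\{n E_v : n \geq 0\}$ is finite (this is a standard application of Dickson's lemma). So the problem reduces to showing that for each fixed $v \in \calV$ one has $\hh(n E_v) \to \infty$ as $n \to \infty$.

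Finally I would exhibit enough linearly independent classes modulo the divisorial filtration. Fix $v$, and pick any nonzero $f$ in the maximal ideal $\m_{X,o}$; its lift to $\tX$ vanishes along every component of $E = \phi^{-1}(o)$, so $a := \mathrm{ord}_{E_v}(f) \geq 1$. Because $(X,o)$ is irreducible, $\calO_{X,o}$ is a domain, so all powers $f^j$ are nonzero and satisfy $\mathrm{ord}_{E_v}(f^j) = ja$. These powers then lie in pairwise distinct pieces of the $\mathrm{ord}_{E_v}$-filtration, hence represent linearly independent classes in $H^0(\tX, \calO_{\tX})/H^0(\tX, \calO_{\tX}(-n E_v))$ for all $ja < n$. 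This gives $\hh(n E_v) \geq \lfloor (n-1)/a \rfloor + 1$, which tends to infinity, completing the reduction.

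The only conceptually subtle step is the second one, identifying the combinatorial fact that monotone level sets in $\Z_{\geq 0}^s$ are finite iff bounded on each axis; once that is in hand, the third step is essentially automatic from the infiniteness of the semigroup of values of the divisorial valuation on $\m_{X,o}$. No hypothesis beyond irreducibility of $(X,o)$ and the existence of a good resolution is required.
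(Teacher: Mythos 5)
Your proof is correct. It proves the same statement but organizes the argument differently from the paper. The paper argues by contradiction: given an infinite sublevel set of $\hh$, it extracts an increasing sequence $\{x_j\}$ with a single coordinate $v$ tending to $\infty$ and all others constant, observes that the decreasing chain of ideals $H^0(\calO_{\tX}(-x_j))$ must stabilize (their codimensions in $H^0(\calO_{\tX})$ are bounded), and then derives a contradiction by looking at the multiplicity along $E_v$ of any element of the stabilized ideal. You instead make the reduction to $\hh$ explicit via the bound $w_0 \geq \hh - p_g$, replace the subsequence extraction by the cleaner observation that $\{\hh \leq M\}$ is an order ideal and hence finite as soon as it is bounded on each coordinate ray, and then exhibit a concrete witness (powers of a single $f \in \m_{X,o}$) showing $\hh(nE_v) \to \infty$. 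Both proofs hinge on the same fact — that functions in the local ring have bounded order along $E_v$, forcing $\hh$ to grow — but your version is constructive rather than by contradiction, and sidesteps the (unstated but needed) fact in the paper's proof that the stabilized ideal is nonzero. One small remark: the combinatorial step you attribute to Dickson's lemma is really more elementary — if $l \in I$ and $I$ is downward closed, then $l_v E_v \in I$ for each $v$, so $I$ is contained in the box $\prod_v [0, N_v]$ — and stating it this way avoids invoking any nontrivial well-quasi-order machinery.
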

\begin{proof}
Assume the opposite. Then there exists an infinite  sequence of cycles $\{l_i\}_{i\geq 1}$
 such that $\hh(l_i)\leq n$ for any $i$, and for a certain $v\in\calv$ the
 $v$--coordinates $\{l_{i,v}\}_i $ tend to infinity. Then, choose another sequence
 $\{\bar{l}_i\}$ with $\bar{l}_i\leq l_i$ so that $\bar{l}_{i,v}=l_{i,v}$ but all the other
 coordinates are bounded. For this again  $\hh(\bar{l}_i)\leq n$. Then $\{\bar{l}_i\}_i$
 admits an increasing  subsequence $\{x_j\}_j$ such that $\lim_{j\to \infty}x_{j,v}=\infty$ and
the sequence  $\{x_{j,w}\}_j  $ is constant  for any other $w\not=v$.
 Since $\hh(x_j)\leq n$, the sequence of ideal
 $H^0(\calO_{\tX}(-x_j))$ must stabilize for $j$ large. Let us
 choose some $f$ from this stabilised vector space, and let $m_v$ be its multiplicity along $E_v$.
 Then for any $j$ sufficiently large $x_{j,v}>m_v$, hence $f\not\in  H^0(\calO_{\tX}(-x_j))$,
 which is a contradiction.
\end{proof}

Furthermore,  we define
$w_q:\calQ_q\to \Z$ by $ w_q(\square_q)=\max\{w_0(l)\,:\, l \
 \mbox{\,is any vertex of $\square_q$}\}$.
 In the sequel  we write $w$ for the system $\{w_q\}_q$ if there is no confusion.

 The compatible weight functions $\{w_q\}_q$
 for any $c\geq \Z_{coh}$ (finite or infinite) define the lattice cohomology $\bH^*(R(0,c),w)$
 and a graded root $\RR(R(0,c),w)$.

\begin{lemma}\label{lem:INDEPAN}
$\bH^*(R(0,c),w)$ and  $\RR(R(0,c),w)$ are  independent of the choice of $c$
($Z_{coh}\leq c\leq \infty$).
\end{lemma}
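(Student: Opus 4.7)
The plan is to show that for any $Z_{coh} \leq c_1 \leq c_2 \leq \infty$, the inclusion $R(0,c_1) \hookrightarrow R(0,c_2)$ induces, for every $n$, a homotopy equivalence of sublevel sets $S_n(R(0,c_1),w) \hookrightarrow S_n(R(0,c_2),w)$, and these equivalences are compatible with the filtration $S_n \subseteq S_{n+1}$. Since $\bH^*$ is built from $H^*(S_n,\Z)$ with $U$-action induced by these inclusions, and the graded root is built from $\pi_0(S_n)$ together with the same inclusions, this will yield the desired independence of $c$.

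The crucial ingredient is a monotonicity property: \emph{if $l_v \geq Z_{coh,v}$ for some $v \in \cV$, then $w_0(l+E_v) \geq w_0(l)$}. Indeed, under this hypothesis the componentwise minimum satisfies $\min\{l+E_v, Z_{coh}\} = \min\{l, Z_{coh}\}$, so Corollary \ref{cor:matr} gives $h^{n-1}(\calO_{l+E_v}) = h^{n-1}(\calO_l)$; combined with the monotonicity $\hh(l+E_v) \geq \hh(l)$ this yields $w_0(l+E_v) - w_0(l) = \hh(l+E_v) - \hh(l) \geq 0$.

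Using this, I construct a discrete Morse matching (in the sense of Forman) that collapses $S_n(R(0,c_2),w)$ onto $S_n(R(0,c_1),w)$. Fix $v \in \cV$ and $k$ with $c_{1,v} < k \leq c_{2,v}$, and match each cube $\sigma = (l,I)$ with $l_v = k$ and $v \notin I$ to the cube $\tau = (l - E_v, I \cup \{v\})$, of which $\sigma$ is the top face in the $v$-direction. The vertices of $\tau$ decompose as those of $\sigma$, namely $\{l + E_{I''} : I'' \subseteq I\}$, together with those of its bottom face $\{l - E_v + E_{I''} : I'' \subseteq I\}$. Applying the monotonicity at each point $l - E_v + E_{I''}$ (whose $v$-coordinate is $k - 1 \geq c_{1,v} \geq Z_{coh,v}$) yields $w_0(l - E_v + E_{I''}) \leq w_0(l + E_{I''})$, so $w(\tau) = w(\sigma)$. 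Consequently $\sigma \in S_n \Leftrightarrow \tau \in S_n$ for every $n$, and the matching restricts to a matching on each $S_n$.

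Iterating this matching for $k$ running from $c_{2,v}$ down to $c_{1,v}+1$, and then over all $v \in \cV$, gives a sequence of elementary collapses whose critical (unmatched) cells are precisely the cubes lying in $R(0,c_1)$. Since each individual matching depends only on the combinatorial data $(v, k)$, never on $n$, these collapses commute with the inclusions $S_n \hookrightarrow S_{n+1}$, so the resulting deformation retracts preserve the $\Z[U]$-module structure on $\bH^*$ and induce bijections on $\pi_0(S_n)$ compatible with the edges of the graded root. The case $c = \infty$ reduces to the finite case via Lemma \ref{lem:finiteness}: each $S_n(R(0,\infty),w)$ is a finite subcomplex and equals $S_n(R(0,c),w)$ for all $c$ sufficiently large (depending on $n$). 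The main obstacle is verifying acyclicity of the combined matching after iterating over many slabs; this is handled by performing the slab collapses in the lexicographic order on pairs $(v, k)$, so that each stage is an independent slab collapse of the already-reduced complex.
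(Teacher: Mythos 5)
Your proof is correct and follows essentially the same route as the paper: both use Corollary \ref{cor:matr} to show that $w_0$ is monotone in each coordinate direction above $Z_{coh}$, and then collapse the rectangle one $E_v$-slab at a time, compatibly across $n$. The only difference is technical packaging — the paper writes the slab collapse as an explicit strong deformation retraction of $S_n\cap R(0,c)$ onto $S_n\cap R(0,c-E_v)$, while you realize the same collapse via a discrete Morse matching; this is a heavier tool than needed but entirely valid.
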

\begin{proof}
Fix some $c\geq Z_{coh}$ and choose $E_v$ in the support of $c-Z_{coh}$.
Then for any $l\in R(0,c)$ with
$l_v=c_v$ we have $\min\{l, Z_{coh}\}=\min\{l-E_v, Z_{coh}\}$.
Therefore, by Corollary \ref{cor:matr}, $h^{n-1}(\calO_{l-E_v})=h^{n-1}(\calO_l)$, thus
$w_0(l-E_v)\leq w_0(l)$. Then for any $n\in \Z$, a strong deformation retraction  in the direction $E_v$ realizes
a homotopy equivalence between the spaces $S_n\cap R(0,c)$ and  $S_n\cap R(0,c-E_v)$.
A natural retraction
 $r:S_n\cap R(0,c)\to S_n\cap R(0,c-E_v)$ can be defined as follows (for notation see \ref{9complex}).
If $\square =(l,I)$ belongs to $ S_n\cap R(0,c-E_v)$ then $r$ on $\square$ is defined as the identity.
If $(l,I)\cap  R(0,c-E_v)=\emptyset$, then $l_v=c_v$, and  we set  $r(x)=x-E_v$. Else,
$\square =(l,I)$ satisfies $v\in I$ and $l_v=c_v-1$. Then we retract $(l,I)$ to $(l, I\setminus v)$ in the $v$--direction.
The strong deformation retract is defined similarly.
\end{proof}

\begin{corollary}\label{cor:veges}
(a) The graded root $\RR(R(0,c),w)$ satisfies $|\chic^{-1}(n)|=1$ for any $n\gg 0$.

(b) $\bH^*_{red}(R(0,c),w)$ is a finitely generated $\Z$-module (for any finite or infinite $c\geq Z_{coh}$).
\end{corollary}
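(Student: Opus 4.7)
The plan is to reduce both statements to the case where $c$ is finite via Lemma \ref{lem:INDEPAN}, where the geometry becomes elementary because the rectangle $R(0,c)$ is a finite cubical complex.

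First I would invoke Lemma \ref{lem:INDEPAN}: since $\bH^*(R(0,c),w)$ and $\RR(R(0,c),w)$ do not depend on the choice of $c \geq Z_{coh}$, it suffices to prove both assertions for a single finite choice, e.g.\ $c = Z_{coh}$ itself. From here on, fix such a finite $c$, so the rectangle $R(0,c)$ contains only finitely many lattice points and finitely many cubes $\square_q$, and the weight $w_0$ attains a finite maximum value $N := \max\{w_0(l) : l \in R(0,c) \cap L\}$.

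For part (a), the key observation is that for every $n \geq N$ the sublevel set $S_n$ equals the entire rectangle $R(0,c)$: indeed, by definition $w_q(\square_q)$ is the maximum of $w_0$ on the vertices of $\square_q$, so $w_q(\square_q) \leq N$ for every cube, meaning every cube is included in $S_n$. Since $R(0,c)$ is a convex cubical region in $\mathbb{R}^s$, it is connected (in fact contractible), hence $|\pi_0(S_n)| = 1$ for all $n \geq N$. By the construction of $\chic_w$ in \ref{bek:GRootW}, this is precisely the statement $|\chic_w^{-1}(n)| = 1$ for $n \gg 0$.

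For part (b), note that $S_n = \emptyset$ when $n < w_0(0) = 0$, and $S_n = R(0,c)$ is contractible when $n \geq N$, so $\widetilde{H}^q(S_n, \Z) = 0$ in both ranges. For the finitely many intermediate values $0 \leq n < N$, $S_n$ is a finite CW complex (a union of finitely many cubes from $\calQ_*$), hence each $\widetilde{H}^q(S_n, \Z)$ is a finitely generated $\Z$-module. Therefore
\[
\bH^*_{red}(R(0,c),w) = \bigoplus_{0 \leq n < N} \widetilde{H}^*(S_n, \Z)
\]
is a finite direct sum of finitely generated $\Z$-modules, hence finitely generated. I do not anticipate any real obstacle here: the entire content of the corollary is that the finiteness of $c$ (guaranteed by the existence of the cohomology cycle, Proposition \ref{prop:cohcyc}) collapses the a priori infinite constructions of \ref{ss:latweight} and \ref{bek:GRootW} to a finite cubical complex.
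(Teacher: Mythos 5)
Your proposal is correct and follows essentially the same route as the paper's (very terse) proof: reduce to a finite $c$ via Lemma \ref{lem:INDEPAN}, observe that for $n\gg 0$ every cube of the finite rectangle $R(0,c)$ lies in $S_n$, so $S_n=R(0,c)$ is contractible, which gives (a) directly and gives (b) because only finitely many $n$ contribute a nonzero $\widetilde H^*(S_n,\Z)$, each one finitely generated as $S_n$ is a finite cubical complex. The paper does not spell out the appeal to Lemma \ref{lem:INDEPAN}, but it is implicitly needed for the $c=\infty$ case exactly as you say, so making it explicit is an improvement.

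One small inaccuracy: you write ``$S_n=\emptyset$ when $n<w_0(0)=0$'' and accordingly index the reduced cohomology by $0\leq n<N$. In fact $w_0(0)=0$ need not be the minimum of $w_0$ on $R(0,c)$ (the weight $w_0(l)=\hh(l)-h^{n-1}(\calO_l)$ can take negative values, since $h^{n-1}(\calO_l)$ may exceed $\hh(l)$), so $S_n$ can be nonempty for some negative $n$. The correct statement, per \ref{9complex}, is that $S_n=\emptyset$ only for $n<m_w:=\min w_0$, and the direct sum runs over $m_w\leq n<N$. Since $m_w$ is still finite this is again a finite direct sum, so your conclusion stands unchanged; you only need to replace the lower bound $0$ by $m_w$.
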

\begin{proof}
For any $n\gg 0$ we have $R(0,c)=S_n$, hence $S_n$ is contractible for such $n$.
\end{proof}

In the sequel we rewrite  the $c$--independent $\bH^*(R(0,c),w)$ and
$\RR(R(0,c),w)$
as $\bH^*_{an}(\phi)$ and $\RR_{an}(\phi)$ respectively.

\subsection{The analytic lattice cohomology of $(X,o)$, independence  of $\phi$}\label{ss:anphi} \

Fix some $c\geq Z_{coh}$ and consider $R=R(0,c)$  and $\bH^*_{an}(\phi)$ as above.

\begin{theorem}\label{th:annlattinda} Assume that $h^{n-1}(\calO_E)=0$. Then
$\bH^*_{an}(\phi)$ and $\RR_{an}(\phi)$  are
 independent of the choice of the resolution  $\phi$.
\end{theorem}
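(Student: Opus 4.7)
The plan is to combine a factorization argument with a direct deformation-retract computation in the direction of the newly created exceptional divisor. First I would invoke the Weak Factorization Theorem of Abramovich--Karu--Matsuki--W\l odarczyk for birational maps between smooth varieties to reduce to the case of two good resolutions differing by a single blow-up $\pi:\tX'\to\tX$ along a smooth subvariety $Z\subset\tX$ with normal crossings with $E$. Let $E_0=\pi^{-1}(Z)$ be the new exceptional component, and identify each strict transform of $E_v$ with $E_v$ itself, so that $L'=L\oplus \Z E_0$ and $L\hookrightarrow L'$ as the sublattice with vanishing $E_0$-coordinate.

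Next I would match the weight functions on this sublattice. Using $\pi_*\calO_{\tX'}(-l)=\calO_{\tX}(-l)$ together with $R^i\pi_*\calO_{\tX'}(-l)=0$ for $i>0$ and the Leray spectral sequence, one gets $\hh'(l,0)=\hh(l)$; the analogous statement for $\Omega^n$, using Grauert--Riemenschneider-type vanishing applied to $\pi$, yields $\hh'^\circ(l,0)=\hh^\circ(l)$. Consequently $w'_0(l,0)=w_0(l)$ for every $l\in L_{\geq 0}$, and for any admissible $c$ the cubical complex $S_n(w)$ embeds naturally as a face of $S_n(w')$ computed inside the rectangle $R(0,(c,k))\subset L'$.

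The analytic core of the argument is to show that this inclusion is a homotopy equivalence for every $n$. Following the pattern of Lemma \ref{lem:INDEPAN}, it suffices to establish that $k\mapsto w'_0(l,k)$ is non-decreasing for fixed $l\in L_{\geq 0}$, which produces a strong deformation retract of $S_n(w')$ onto $S_n(w)$. To pin down this monotonicity I would dissect the jumps $\hh'(l,k+1)-\hh'(l,k)$ and $\hh'^\circ(l,k+1)-\hh'^\circ(l,k)$ via the exact sequences
\[
0\to\calO_{\tX'}(-l-(k+1)E_0)\to\calO_{\tX'}(-l-kE_0)\to\calO_{E_0}(-l-kE_0)|_{E_0}\to 0
\]
and its $\Omega^n_{\tX'}(l+kE_0)$ counterpart, then appeal to the matroid rank inequalities (\ref{eq:matroid})--(\ref{eq:matroid3}) and the Combinatorial Duality Property of Definition \ref{def:COMPGOR} applied to the pair $(\hh',\hh'^\circ)$ on $\tX'$.

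The hard part will be controlling the boundary contributions $H^{n-1}(\calO_{E_0}(-kE_0)|_{E_0})$ and their $\Omega^n$ analogue, and this is precisely where the hypothesis $h^{n-1}(\calO_E)=0$ is indispensable: Section 5 shows this vanishing is a birational invariant of $(X,o)$, so it propagates to $\tX'$ (giving $h^{n-1}(\calO_{E'})=0$) and then forces the relevant higher cohomology of twists of $\calO_{E_0}$ to vanish, unlocking the monotonicity. Once that is secured, the $E_0$-direction collapses, producing the isomorphism $\bH^*_{an}(\phi')\simeq\bH^*_{an}(\phi)$ of graded $\Z[U]$-modules; applying $\pi_0$ to the same retract on each sublevel set gives the corresponding isomorphism of graded roots $\RR_{an}(\phi')\simeq\RR_{an}(\phi)$.
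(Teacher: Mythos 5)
Your opening moves match the paper: invoke Weak Factorization to reduce to a single blow-up $\pi\colon\tX'\to\tX$ of a smooth center $Z\subset E$, and then compare weight functions. But the central claim around which you build the retraction is false, and the error is precisely where the combinatorics of the blow-up sit.

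You write ``identify each strict transform of $E_v$ with $E_v$ itself, so that $L'=L\oplus\Z E_0$ and $L\hookrightarrow L'$ as the sublattice with vanishing $E_0$-coordinate,'' and then claim $\hh'(l,0)=\hh(l)$, $\hh'^\circ(l,0)=\hh^\circ(l)$, and that $k\mapsto w_0'(l,k)$ is non-decreasing. The first of these is correct (it follows from Lemma~\ref{lem:pullback}, since $\sum_v l_vE'_v=\pi^*l-(\sum_{v\in\cF}l_v)E'_{new}$ and $\sum_{v\in\cF}l_v\geq 0$), but the second is \emph{not}. Writing $\cF=\{v:Z\subset E_v\}$ and $r=\mathrm{codim}\,Z$, one has $K_{\tX'}=\pi^*K_{\tX}+(r-1)E'_{new}$, and the analogue of Lemma~\ref{lem:pullback} for $\Omega^n$ gives $\hh'^\circ(l,0)=\hh^\circ(l)$ \emph{only when} $\sum_{v\in\cF}l_v\leq r-1$. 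For lattice points with $\sum_{v\in\cF}l_v>r-1$ you can have $\hh'^\circ(l,0)>\hh^\circ(l)$, hence $w_0'(l,0)>w_0(l)$. Consequently the zero-$E_0$ slice does not reproduce $S_n(\phi)$ inside $S_n(\phi')$. Worse, the monotonicity you need fails at $k=0$: $w_0'(\pi^*x+aE'_{new})$ is \emph{decreasing} for $a\leq 1-r$, flat for $1-r\leq a\leq 0$, and increasing for $a\geq 0$ (this is equation (\ref{eq:HOM2a})), and the point $(l,0)$ corresponds to $a=-\sum_{v\in\cF}l_v$, which is $<1-r$ as soon as $\sum_{v\in\cF}l_v>r-1$. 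So for such $l$ the fiber over $l$ is not an increasing ray starting at the slice; its minimum sits strictly in the interior, and the strong deformation retract you describe does not exist.

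The correct comparison is through the \emph{projection} $\pi_{\R}$ in the $E'_{new}$-direction (not through an inclusion of a slice), together with the observation that on each fiber the weight is quasi-convex with minimum at the lattice segment $[\pi^*x-(r-1)E'_{new},\pi^*x]$ where it equals $w_0(x)$. Showing that the inverse image of every cube of $S_n(\phi)$ is contractible is then a genuine combinatorial problem. When $|\cF|\leq r-1$ the cube $(\pi^*x,I)$ always lifts $(x,I)$, but when $|\cF|=r$ and $\cF\subset I$ the naive lift may have a vertex outside $S_n(\phi')$; the paper splits into Cases 1--3 according to whether $(\pi^*x,I)$ or $(\pi^*x+E'_{new},I)$ lifts, and the residual bad configuration (Case 3) is ruled out by exhibiting a function $f$ and an $n$-form $\omega$ whose product would force $h^{n-1}(\calO_{E'_I+E'_{new}})\neq 0$, contradicting $h^{n-1}(\calO_E)=0$. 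This is the actual place the hypothesis enters --- not, as you suggest, in making the boundary cohomologies $H^{n-1}(\calO_{E_0}(-kE_0)|_{E_0})$ vanish. Your proposal omits the entire $|\cF|=r$ analysis and substitutes an assertion of monotonicity that does not hold; as written the argument cannot be repaired without essentially rewriting it along the lines of the paper.
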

\begin{proof}
By the Weak Factorization Theorem \cite{Wlod} it is  enough to show
that for a  fixed  resolution $\phi$  blowing up a smooth subvariety
of $E$
does not change $\bH^*_{an}(\phi)$. Indeed,  any two good resolutions are connected by
a sequence of such blowups and blowdowns.

So
let us fix a good resolution $\phi$ with exceptional set $E=\cup_{v\in \cV}E_v$, and blow
up a compact smooth irreducible subvariety $F$ on $E$.
Let $\pi$ be this blowup, and write $\phi':= \phi\circ \pi$. Let $E'=(\phi')^{-1}(o)$, $E'_{new}=\pi^{-1}(F)$.
We set $E_v'$ for the strict transform of $E_v$,  hence $E'=(\cup_vE'_v) \cup E'_{new}$.

Let $r\geq 2$ be  the codimension of $F$. Furthermore, let $\cF:=\{v\in\cV\,:\, F\subset E_v\}$.
Since $F\subset E$ and $F$ is irreducible, necessarily $\cF\not=\emptyset$.
Furthermore, since  $E$ is a normal crossing divisor, $|\cF|\leq r$.

%

Let $L$ and $L'$ be the corresponding free $\Z$-modules.
Associated with $\phi$,  let $\hh$  be the
 Hilbert function, $w_0$ the analytic weight and
 $S_n(\phi)=\cup\{\square\,:\, w(\square)\leq n\}$. We use similar notations  $\hh'$, $w_0'$ and
 $S_n(\phi')$ for  $\phi'$.


  We have the following natural morphisms:
 $\pi_*:L'\to L$ defined by $\pi_*(\sum x_vE'_v+x_{new}E'_{new})=\sum
x_vE_v$, and $\pi^*:L\to L' $  defined by
$\pi^*(\sum x_vE_v)=\sum x_vE'_v +(\Sigma_{v\in \cF}x_v)\cdot E'_{new}$.

 The following lemma will be used several times.

 \begin{lemma}\label{lem:pullback}
$H^0(\tX', \pi^*\calL(aE'_{new}))=H^0(\tX,\calL)$ for any $a\geq 0$ and line bundle $\calL$ on $\tX$.
\end{lemma}
\begin{proof}
The composition $H^0(\tX,\calL)\stackrel {\pi^*}{\hookrightarrow}
H^0(\tX', \pi^*\calL(aE_{new}'))\hookrightarrow
H^0(\tX'\setminus E_{new}', \pi^*\calL(aE_{new}'))\simeq
H^0(\tX\setminus F,\calL)$ is injective, and the
inclusion  $H^0(\tX,\calL)\hookrightarrow
H^0(\tX\setminus F,\calL)$
is an isomorphism since $r\geq 2$.
\end{proof}

For any $x\in R$, Lemma \ref{lem:pullback} applied for $\calL=\calO_{\tX}(-x)$ gives
\begin{equation}\label{eq:1}
 \hh'(\pi^*x+aE'_{new}) \ \left\{ \begin{array}{l}
 = \hh(x) \ \mbox{ for any $a\leq 0$} \\
 \mbox{is increasing for $a\geq 0$}.\end{array}\right.
\end{equation}
We wish a similar fact for $\hh^\circ$. First note that $K_{\tX'}=\pi^*K_{\tX}+(r-1)E'_{new}$ (see
\cite[Ex. II.8.5]{Hartshorne}. Then,
\begin{equation*}\begin{split}
h^{n-1}(\calO_{\pi^*x+aE'_{new}})&=\dim\,\frac{H^0(\tX', \Omega^n_{\tX'}(\pi^*x +a E'_{new}))}
{H^0(\tX', \Omega^n_{\tX'})}\\
\ & =\dim\,\frac{H^0(\tX', \calO_{\tX'}(\pi^*K_{\tX} +(r-1)E'_{new}+\pi^*x +a E'_{new}))}
{H^0(\tX', \calO_{\tX'}(\pi^*K_{\tX}+(r-1)E'_{new}))}.\end{split}\end{equation*}
By Lemma \ref{lem:pullback}, $H^0(\tX', \calO_{\tX'}(\pi^*K_{\tX}+(r-1)E'_{new}))=
H^0(\tX, \calO_{\tX}(K_{\tX}))=H^0(\tX, \Omega^n_{\tX})$,
while $$H^0(\tX', \calO_{\tX'}(\pi^*K_{\tX} +(r-1)E'_{new}+\pi^*x +a E'_{new}))
=H^0(\tX, \Omega^n _{\tX}(x))$$ whenever $r-1+a\geq 0$.
 Therefore,
 \begin{equation}\label{eq:2}
 h^{n-1}(\calO_{\pi^*x+aE'_{new}}) \ \left\{ \begin{array}{l}
  \mbox{is increasing  for $a\leq  1-r$}, \\
= h^{n-1}(\calO_x) \ \mbox{ for any $a\geq  1-r$}. \\
\end{array}\right.
\end{equation}
In particular, (\ref{eq:2}) applied for $a=1-r$ we obtain that
$$\mbox{if $c\geq Z_{coh}(\phi)$ \ then \
$\pi^*c-(r-1)E'_{new}\geq Z_{coh}(\phi')$ too.}$$
Indeed,
$h^{n-1}(\calO_{\pi^*c-(r-1)E'_{new}})=h^{n-1}(\calO_c)=h^{n-1}(\calO_{\tX})= h^{n-1}(\calO_{\tX'})$.

(\ref{eq:1}) and (\ref{eq:2}) combined provide
\begin{equation}\label{eq:HOM2a}
a\mapsto w_0'(\pi^*x+aE'_{new}) \ \left\{ \begin{array}{l}
\mbox{is  decreasing for $a\leq  1-r$},\\
 = w_0(x) \ \mbox{ for  $1-r\leq a\leq 0$,} \\
 \mbox{is increasing for $a\geq 0$}.\end{array}\right.
\end{equation}
Next we compare the lattice cohomology of the rectangles $R=R(0,c)$ and $R'=R(0,\pi^*c)$ associated with
$w$ and $w'$ respectively.

If $w_0'(\pi^*x+aE'_{new})\leq n$, then
$w_0(x)\leq n$ too. In particular, the projection $\pi_{\R}$ in the direction of $E'_{new}$
induces a well-defined map $\pi_{\R}:S_n(\phi')\to S_n(\phi)$.
We claim that this is a   homotopy equivalence  (with all fibers non-empty and contractible).

\bekezdes\label{bek:r1} Recall that $|\calF|\leq r$.   {\bf In the first case we assume that $|\calF|\leq r-1$. }

Our goal is to prove that $\pi_{\R}:S_n(\phi')\to S_n(\phi)$ is a homotopy equivalence.

Let us first verify that  $\pi_{\R}: S_n(\phi')\to S_n(\phi)$ is onto.

Consider a  lattice point $x\in S_n(\phi)$. Then $w_0(x)\leq n$. But then $w_0'(\pi^*x)
=w_0(x)\leq n$ too, hence $\pi^*(x)\in S_n(\phi')$ and $x=\pi_{\R}(\pi^*x)\in {\rm im}(\pi_{\R})$.

Next take  a cube $(x,I)\subset  S_n(\phi)$ ($I\subset \cV$). This means that
$w_0(x+E_{I'})\leq n$ for any $I'\subset I$. But then
\begin{equation}\label{eq:eps}
\pi^*(x+E_{I'})=\pi^*x+ E'_{I'}+\epsilon\cdot E'_{new},
\end{equation}
where $\epsilon=|I\cap \calF|$. In particular, by our assumption, $\epsilon\in \{0, \ldots, r-1\}$.
Hence
\begin{equation}\label{eq:eps2}
w_0'(\pi^*x+E'_{I'}) =w_0'(\pi^*(x+E_{I'})-\epsilon E'_{new})\stackrel {(\ref{eq:HOM2a})}{=}
w_0(x+E_{I'})\leq n.
\end{equation}
Therefore $(\pi^*x,I)\in S_n(\phi')$ and $\pi_{\R}$ projects $(\pi^*x, I)$  isomorphically  onto $(x,I)$.

Next, we show that $\pi_{\R}$ is in fact a homotopy equivalence.
In order to prove this fact it is enough to verify that if
$\square\in S_{n}(\phi)$ and $\square ^\circ$ denotes its relative interior,
then $\pi_{\R}^{-1}(\square^\circ) \cap S_{n}(\phi')$ is contractible.

Let us start again with a lattice point $x\in S_n(\phi)$. Then $\pi_{\R}^{-1}(x)\cap S_n(\phi')$ is a real  interval
(whose end-points are lattice points, considered in the real line of the $E_{new}$ coordinate).
Let us denote it by $\cI(x)$. Now, if $\square=(x,I)$, then we have to show that
all the intervals $\cI(x+E_{I'})$ associated with all the subsets $I'\subset I$ have a common
lattice point. But this is exactly what we verified above: the $E'_{new}$--coordinate of $\pi^*(x)$ is such a common
point. Therefore, $\pi_{\R}^{-1}(\square ^\circ)\cap S_n(\phi')$
has a strong deformation retraction (in the $E'_{new}$ direction)
to the contractible space $(\pi^*x, I)^\circ$.

For any $l\in L$ let  $N(l)\subset \R^s$ denote the union of all cubes which have $l$ as one of their vertices.
Let $U(l)$ be its interior. Write $U_n(l):=U(l)\cap S_n(\phi)$. If $l\in S_n(\phi)$ then $U_n(l)$ is a contractible
neighbourhood of $l$ in $S_n(\phi)$. Also,  $S_n(\phi)$ is covered by $\{U_n(l)\}_l$.
Moreover, $\pi_{\R}^{-1}(U_n(l))$ has the homotopy type of $\pi_{\R}^{-1}(l)$, hence it is contractible.
More generally, for any cube $\square$,
$$\pi_{\R}^{-1}(\cap _{\mbox{$v$ vertex of $\square$}} U_n(l)) \sim \pi_{\R}^{-1}(\square ^\circ)$$
which is contractible by the above discussion. Since all the intersections of $U_n(l)$'s are of this type,
we get that the inverse image of any intersection is contractible. Hence by \v{C}ech covering
(or Leray spectral sequence) argument, $\pi_{\R}$
induces an isomorphism
$H^*(S_n(\phi'),\Z)=H^*(S_n(\phi),\Z)$. In fact, this already shows that $\bH^*_{an}(\phi')=\bH^*_{an}(\phi)$.
In order to prove the homotopy equivalence,
one can use   quasifibration, defined in  \cite{DoldThom};
 see also \cite{DadNem}, e.g. the relevant Theorem 6.1.5.
Since
$\pi_{\R}:S_{n}(\phi')\to  S_{n}(\phi)$  is a quasifibration,
and  all the fibers are contractible, the homotopy equivalence follows.

\bekezdes  {\bf Assume now  that $|\calF|= r$. }
The proof starts very similarly. Indeed, as above, for any lattice point
 $x\in S_n(\phi)$ we have  $\pi^*(x)\in S_n(\phi')$ and $x=\pi_{\R}(\pi^*x)\in {\rm im}(\pi_{\R})$.

If we take a cube $(x,I)\subset  S_n(\phi)$ ($I\subset \cV$), then
$w_0(x+E_{I'})\leq n$ for any $I'\subset I$. We consider the identity (\ref{eq:eps}) as above.
If $|I\cap\calF|\leq r-1$ then the proof from \ref{bek:r1} works unmodified, hence
$\pi_{\R}:S_n(\phi')\to S_n(\phi)$ is a homotopy equivalence.

Assume next that  $|I\cap\calF|= r$, i.e. $ \calF\subset I$. Write $J:= I\setminus \calF$.

\bekezdes {\bf Case 1.} Let us analyse the cube $(\pi^*x,I)$  as a possible cover of $(x,I)$.

Using (\ref{eq:HOM2a}) we obtain that for any $I'\subset I$ such that $|I'\cap \calF|\leq r-1$
 we have $\pi^*x+E'_{I'}\in S_n(\phi')$.
Indeed, $$
w_0'(\pi^*x+E'_{I'}) =w_0'(\pi^*(x+E_{I'})-|I'| E'_{new})\stackrel {(\ref{eq:HOM2a})}{=}
w_0(x+E_{I'})\leq n.$$
But the vertices  $\pi^*x+E'_{I'}$, with  $|I'\cap \calF|= r$,
are  not necessarily in $S_n(\phi')$.


 However, let us  assume that $w_0'(\pi^*x+E'_I)=w_0'(\pi^*x+E'_I+E'_{new})$,
 or $w_0'(\pi^*(x+E_I)-rE'_{new})=w_0'(\pi^*(x+E_I)-(r-1)E'_{new})$.
  Then by (\ref{eq:1}) and (\ref{eq:2})
 we obtain that $h^{n-1}(\calO_{\pi^*x+E'_I})=h^{n-1}(\calO_{\pi^*x+E'_I+E'_{new}})$.  By the opposite
 matroid rank inequality of $h^{n-1}$ and (\ref{eq:1}) and (\ref{eq:2}) again we obtain that
 $w_0'(\pi^*x+E'_I-E'_{J'})=w_0'(\pi^*x+E'_I-E'_{J'}+E'_{new})$ for any $J'\subset J$.
 In particular,
 $$w_0'(\pi^*x+E'_I-E'_{J'})=w_0'(\pi^*x+E'_I-E'_{J'}+E'_{new})=w_0'(\pi^*(x+E_I-E_{J'})-(r-1)E'_{new})=
 w_0(x+E_I-E_{J'})\leq n.$$
That is, the  vertices of type $\pi^*x+E'_I-E'_{J'}$ of $(\pi^*x,I)$
are in $S_n(\phi')$. For all other vertices we already know this fact (see above, or use (\ref{eq:HOM2a})).
Hence $(\pi^*x,I)$ is in $S_n(\phi')$ and it projects via $\pi_{\R}$ bijectively to $(x,I)$.
Furthermore, $\pi_{\R}^{-1}(x,I)^\circ \cap S_n(\phi')$ admits a strong deformation retraction  to
 $(\pi^*x,I)^\circ $, hence it is contractible.

\bekezdes {\bf Case 2.} Let us analyse the second candidate,
the cube $(\pi^*x+E'_{new},I)$,  as a possible cover of $(x,I)$.

Using (\ref{eq:HOM2a}) we obtain that for any $I'\subset I$, $|I'\cap \calF|\not=0$
 we have $\pi^*x+E'_{new}+E'_{I'}\in S_n(\phi')$.
Indeed, $$
w_0'(\pi^*x+E'_{new} +E'_{I'}) =w_0'(\pi^*(x+E_{I'})+E'_{new}-|I'\cap \calF| E'_{new})\stackrel {(\ref{eq:HOM2a})}{=}
w_0(x+E_{I'})\leq n,$$
since $0\leq |I'\cap\calF|-1\leq r-1$.
But in this case, the vertices  $\pi^*x+E'_{new}+E'_{I'} $ with $I'\cap\calF=\emptyset$ are not necessarily in $S_n(\phi')$.

However, let us assume at this time that
 $w_0'(\pi^*x)=w_0'(\pi^*x+E'_{new})$. Then by (\ref{eq:1}) and (\ref{eq:2})
 we obtain that $\hh'(\pi^*x)=\hh'(\pi^*x+E'_{new})$.  By the matroid rank inequality of $\hh'$
 we get that $\hh'(\pi^*x+E'_{J'})=\hh'(\pi^*x+E'_{J'}+E'_{new})$ for any $J'\subset J$.
 This again via (\ref{eq:1}) and (\ref{eq:2})
 shows that $w_0'(\pi^*x+E'_{J'})=w_0'(\pi^*x+E'_{J'}+E'_{new})$.
 In particular,
 $$w_0'(\pi^*x+E'_{J'}+E'_{new})=w_0'(\pi^*x+E'_{J'})=w_0'(\pi^*(x+E_{J'}))=w_0(x+E_{J'})\leq n.$$
That is, the  vertices of type $\pi^*x+E'_{J'}+E'_{new}$ of $(\pi^*x+E'_{new},I)$
are in $S_n(\phi')$. For all other vertices we already know this fact
(see above).
Hence $(\pi^*x+E'_{new},I)$ is in $S_n(\phi')$ and it projects via $\pi_{\R}$ bijectively to $(x,I)$.
Furthermore, $\pi_{\R}^{-1}(x,I)^\circ \cap S_n(\phi')$ admits a deformation retraction to
 $(\pi^*x+E_{new},I)^\circ $, hence it is contractible.

\bekezdes {\bf Case 3.} If  $w_0'(\pi^*x+E'_I)=w_0'(\pi^*x+E'_I+E'_{new})$ then by Case 1 we cover
$(x,I)$ by a cube. If  $w_0'(\pi^*x)=w_0'(\pi^*x+E'_{new})$ then the same happens by Case 2.
Here in this case we assume that neither of these is satisfied, that is
 $w_0'(\pi^*x+E'_I)>w_0'(\pi^*x+E'_I+E'_{new})$ and
$w_0'(\pi^*x)<w_0'(\pi^*x+E'_{new})$.

If $w_0'(\pi^*x+E'_I)>w_0'(\pi^*x+E'_I+E'_{new})$ then
 $h^{n-1}(\calO_{\pi^*x+E'_I})< h^{n-1}(\calO_{\pi^*x +E'_I+E'_{new}})$.

If $w_0'(\pi^*x)<w_0'(\pi^*x+E'_{new})$  then  $\hh'(\pi^*x)<\hh'(\pi^*x+E'_{new})$.

These two conditions imply (use (\ref{eq:lL})):
$$\left\{ \begin{array}{l}
(a) \ \ H^0(\calO_{\tX'}(-\pi^*x-E'_{new}) \subsetneq  H^0(\calO_{\tX'}(-\pi^*x)), \ \mbox{and} \\
(b) \ \ H^0(\tX', \Omega^n_{\tX'}(\pi^* x +E'_I)) \subsetneq
H^0(\tX', \Omega^n_{\tX'}(\pi^* x +E'_I+E'_{new})).
\end{array}\right.$$
By part {\it (a)} there exists a function $f\in H^0(\tX', \calO_{\tX'})$
such that ${\rm div}_{E'}(f)\geq \pi^*x$, and in this inequality the $E'_{new}$--coordinate entries are equal.
By part {\it (b)}, there exists a global  $n$--form $\omega $ such that
${\rm div}_{E'}(\omega)\geq -\pi^*x-E'_I-E'_{new}$ and the $E'_{new}$--coordinate entries are equal.
Therefore, the form $f\omega\in H^0(\tX'\setminus E', \Omega^2_{\tX'})$ has the property that
${\rm div}_{E'}(f\omega)\geq -E'_I-E'_{new}$ with equality at the $E'_{new}$ coordinate.
In particular, again by duality (\ref{eq:lL}), we obtain that  in $\tX'$ the following
strict inequality  holds:
\begin{equation}\label{eq:ROSSZ}
h^{n-1}(\calO_{E'_I+E'_{new}})>h^{n-1}(\calO_{E'_I}) \ \ (\cV'=\cV\cup\{new\}, \ I\subset \cV).\end{equation}
But by our assumption $h^{n-1}(\calO_{E'})=0$ (a condition independent of resolution), hence
$h^{n-1}(\calO_{E'_V})=0$ for any $V\subset \calv'$.
In particular  (\ref{eq:ROSSZ}) cannot happen since  $h^{n-1}(\calO_{E'_I+E'_{new}})=h^{n-1}(\calO_{E'_I})=0$

\bekezdes This shows that case 3 cannot hold, hence either Case 1 or Case 2 hold, and in both cases
$\pi_{\R}^{-1}(x,I)^\circ \cap S_n(\phi')$ is contractible. Then the argument from \ref{bek:r1} works, which ends the proof of the theorem.
\end{proof}

\notation
In the sequel we will use for $\bH^*_{an}(\phi)$ the notation  $\bH^*_{an}(X,o)$, and
for $\RR_{an}(\phi)$ the notation  $\RR_{an}(X,o)$.
They are   called the {\it  analytic lattice cohomology } and the
{\it analytic graded root of $(X,o)$} respectively.
They are  invariants of the germ $(X,o)$.

\begin{remark}
The resolution $\tX\to X$ can be factorized through the normalization $(\overline{X},o)$ of
$(X,o)$. In particular,
$\bH^*_{an}(X,o)=\bH^*_{an}(\overline{X},o)$ and
$\RR_{an}(X,o)=\RR_{an}(\overline{X},o)$.
\end{remark}

\subsection{The `Combinatorial Duality Property' of the pair $(\hh, \hh^\circ)$}\label{ss:anCDP}

\bekezdes Next, we wish to apply Theorem \ref{th:comblattice}. Note that $\hh$ satisfies the stability property, since
it satisfies the matroid rank inequality (being induced by a filtration). Next we verify the CDP condition.

\begin{lemma}\label{lem:hsimult} Assume tat $h^{n-1}(\calO_E)=0$. Then
there exists no  $l\in L_{\geq 0}$ and  $v\in\calv$  such that  the differences
$\hh(l+E_v)-\hh(l)$ and $\hh^\circ (l)-\hh^\circ (l+E_v)$ are simultaneously strictly  positive.
\end{lemma}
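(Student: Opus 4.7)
The strategy is to show that if the two differences were simultaneously positive, we could produce a meromorphic $n$--form witnessing $h^{n-1}(\calO_{E_v})>0$, which combined with the surjection $H^{n-1}(\calO_E)\twoheadrightarrow H^{n-1}(\calO_{E_v})$ would contradict the assumption $h^{n-1}(\calO_E)=0$.

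First, I would unfold the two hypotheses in terms of sections. The inequality $\hh(l+E_v)>\hh(l)$ means that the inclusion $H^0(\tX,\calO_{\tX}(-l-E_v))\subsetneq H^0(\tX,\calO_{\tX}(-l))$ is strict, so there exists $f\in H^0(\tX,\calO_{\tX}(-l))$ whose divisor satisfies $\mathrm{div}(f)|_E\geq l$ with equality in the $E_v$-coefficient (the $v$-component is exactly $l_v$). Dually, $\hh^\circ(l)>\hh^\circ(l+E_v)$ means $H^0(\tX,\Omega^n_{\tX}(l))\subsetneq H^0(\tX,\Omega^n_{\tX}(l+E_v))$, so there exists $\omega$ with $\mathrm{div}(\omega)|_E\geq -(l+E_v)$ and equality at the $E_v$-coefficient (the pole has order exactly $l_v+1$ along $E_v$).

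Next, I would form the product $f\omega\in H^0(\tX\setminus E,\Omega^n_{\tX})$. Adding the divisors gives
\[
\mathrm{div}(f\omega)|_E \geq l-(l+E_v)=-E_v,
\]
with equality in the $E_v$-coefficient (the $v$-coefficient equals $l_v-(l_v+1)=-1$). Hence $f\omega\in H^0(\tX,\Omega^n_{\tX}(E_v))\setminus H^0(\tX,\Omega^n_{\tX})$. Applying the duality (\ref{eq:lL}) with $l=E_v$, this yields
\[
0\neq H^0(\tX,\Omega^n_{\tX}(E_v))/H^0(\tX,\Omega^n_{\tX}) \simeq H^{n-1}(\calO_{E_v})^*,
\]
so $h^{n-1}(\calO_{E_v})>0$.

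Finally, I would invoke the short exact sequence $0\to \cI_{E_v/E}\to \calO_E\to \calO_{E_v}\to 0$ of sheaves on $E$. Since $\dim E=n-1$, we have $H^n(E,\cI_{E_v/E})=0$, so the long exact sequence produces a surjection $H^{n-1}(\calO_E)\twoheadrightarrow H^{n-1}(\calO_{E_v})$. Under the assumption $h^{n-1}(\calO_E)=0$, this forces $h^{n-1}(\calO_{E_v})=0$, a contradiction. The only real subtlety is checking the equality in the $v$-component when multiplying $f$ and $\omega$, which is immediate from orders of vanishing/poles along the single divisor $E_v$.
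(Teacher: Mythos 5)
Your proof is correct and follows essentially the same approach as the paper: unfold the two strict inequalities to produce a function $f$ and a form $\omega$ with the stated divisor conditions, observe that $f\omega$ has pole order exactly one along $E_v$, conclude $h^{n-1}(\calO_{E_v})\neq 0$ via the duality (\ref{eq:lL}), and contradict $h^{n-1}(\calO_E)=0$. The only difference is cosmetic: for the last step the paper tacitly invokes the monotonicity of $l\mapsto h^{n-1}(\calO_l)$ (i.e.\ $E_v\leq E$ gives $h^{n-1}(\calO_{E_v})\leq h^{n-1}(\calO_E)$) while you spell it out via the surjection $H^{n-1}(\calO_E)\twoheadrightarrow H^{n-1}(\calO_{E_v})$ coming from the ideal-sheaf sequence; both justifications are valid.
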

\begin{proof}
If $\hh(l+E_v)>\hh(l)$ then there exists a global function $f\in H^0(\calO_{\tX})$ with ${\rm div}_Ef\geq l$, where the
$E_v$-coordinate is  $({\rm div}_Ef)_v = l_v$.
Similarly, if $\hh^\circ (l)>\hh^\circ(l+E_v)$ then there exists a
global $n$--form $\omega$ with possible poles along $E$,
 with ${\rm div}_E\omega \geq -l-E_v$, and
 $({\rm div}_E\omega )_v = -l_v-1$.
In particular, the form $f\omega$
satisfies
 ${\rm div}_E\ f\omega \geq -E_v$  and
  $({\rm div}_E f \omega )_v = -1$. This implies $H^0(\Omega_{\tX}^n(E_v))/H^0(\Omega_{\tX}^n)\not=0$, or,
  by (\ref{eq:lL}), $h^{n-1}(\calO_{E_v})\not=0$. This last fact contradicts  $h^{n-1}(\calO_E)=0$.
\end{proof}

\subsection{The Euler characteristic $eu(\bH^*_{an}(X,o)$}\label{ss:anEu}

\bekezdes Now we can apply Theorem  \ref{th:comblattice}.

Let us  consider any increasing path $\gamma$ connecting 0 and $c$
 (that is, $\gamma=\{x_i\}_{i=0}^t$, $x_{i+1}=x_i+E_{v(i)}$, $x_0=0$ and $x_t=c$, $c\geq Z_{coh}$), and let $\bH^0(\gamma,w)$
be  the  path lattice cohomology  as in \ref{bek:pathlatticecoh}.
Accordingly,  we have the numerical
Euler characteristic
 $eu(\bH^0(\gamma,w))$ as well.

\begin{theorem}\label{th:euANLAT}
$eu(\bH^*_{an}(X,o))=h^{n-1}(\calO_{\tX})$. Furthermore, for any increasing path $\gamma$ connecting 0 and $c$ (where
$c\geq Z_{coh}$)
 we also have $eu(\bH^*_{an}(\gamma,w))=h^{n-1}(\calO_{\tX})$.
\end{theorem}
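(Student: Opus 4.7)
The plan is to reduce the theorem to a direct application of Theorem \ref{th:comblattice}, after checking its two hypotheses for the analytic data $(\hh,\hh^\circ)$ on the rectangle $R = R(0,c)$ with $c \geq Z_{coh}$, and then evaluating the resulting formula explicitly.

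First I would verify the stability property for $\hh$. Since $\hh$ is defined via the divisorial filtration on $H^0(\tX,\calO_{\tX})$ (see (\ref{eq:Hilb})), it automatically satisfies the matroid rank inequality (\ref{eq:matroid}), which immediately implies the stability property (\ref{eq:stability}). Next I would check the Combinatorial Duality Property for the pair $(\hh,\hh^\circ)$. This is essentially already done: Lemma \ref{lem:hsimult}, which crucially uses the assumption $h^{n-1}(\calO_E)=0$ (standing throughout this section), states precisely that there is no $l\in L_{\geq 0}$ and $v\in\calv$ for which $\hh(l+E_v)-\hh(l)$ and $\hh^\circ(l)-\hh^\circ(l+E_v)$ are simultaneously strictly positive, which is the CDP in the sense of Definition \ref{def:COMPGOR}.

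With both hypotheses established, Theorem \ref{th:comblattice}(a) yields at once, for every increasing path $\gamma$ from $0$ to $c$,
$$eu(\bH^*(\gamma,w)) \;=\; eu(\bH^*(R,w)) \;=\; \hh^\circ(0) - \hh^\circ(c).$$
It remains to identify the right-hand side with $h^{n-1}(\calO_{\tX})$. By the definition (\ref{eq:hkor}) together with (\ref{eq:hh}), one has $\hh^\circ(0) = h^{n-1}(\calO_{\tX})$. For the other term, since $c \geq Z_{coh}$, Corollary \ref{cor:matr} gives $h^{n-1}(\calO_c) = h^{n-1}(\calO_{\min\{c,Z_{coh}\}}) = h^{n-1}(\calO_{Z_{coh}})$, and by Proposition \ref{prop:cohcyc} this equals $h^{n-1}(\calO_{\tX})$; hence via (\ref{eq:hh}), $\hh^\circ(c) = 0$. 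Substituting gives the desired value $h^{n-1}(\calO_{\tX})$, proving both equalities in the theorem simultaneously.

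There is no real obstacle here: the combinatorial engine is Theorem \ref{th:comblattice} and the analytic input is Lemma \ref{lem:hsimult}, both already at hand. The only minor subtlety worth noting is that one only needs $c\geq Z_{coh}$ (not $c\gg 0$) to force $\hh^\circ(c)=0$; this is exactly what the cohomology cycle $Z_{coh}$ was introduced for, and is what makes the formula independent of the choice of $c$ in the admissible range, consistent with Lemma \ref{lem:INDEPAN}.
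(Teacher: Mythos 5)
Your proposal is correct and matches the paper's argument exactly: the paper's proof consists of noting in \S\ref{ss:anCDP} that $\hh$ satisfies stability (via the matroid rank inequality) and that the CDP follows from Lemma \ref{lem:hsimult}, and then invoking Theorem \ref{th:comblattice}; your explicit evaluation of $\hh^\circ(0)-\hh^\circ(c)=h^{n-1}(\calO_{\tX})$ via (\ref{eq:hh}), Corollary \ref{cor:matr} and Proposition \ref{prop:cohcyc} fills in exactly the intended arithmetic.
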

This  means that $\bH^*_{an}(X,o)$ is a {\it categorification  of $h^{n-1}(\calO_{\tX})$,}
that is, it is a graded cohomology $\Z[U]$--module whose Euler characteristic is $h^{n-1}(\calO_{\tX})$.

\begin{lemma}\label{lemma:UJ}
 Assume that  $h^{n-1}(\calO_{\tX})=0$.
Then  $h^{n-1}(\calO_E)=0$ too (hence the analytic lattice cohomology and the graded root are well--defined). Furthermore, $\bH^*_{an}(X,o)=\calt^+_{0}$. In particular,
 $\bH^*_{an,red}(X,o)=0$ and the graded root
 $\RR_{an}(X,o)$ is the `bamboo' $\RR_{(0)}$: $\min\chic=0$ and $|\chic^{-1}(n)|=1$ for any $n\geq 0$.

 Conversely, if $h^{n-1}(\calO_E)=0$ (i.e. the lattice cohomology is well-defined) and
  $\bH^*_{an}(X,o)=\calt^+_{0}$ then $h^{n-1}(\calO_{\tX})=0$ too.
\end{lemma}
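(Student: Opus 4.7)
The forward implication splits naturally in two. For the assertion $h^{n-1}(\calO_E)=0$, I would just invoke the surjection $H^{n-1}(\calO_{\tX})\twoheadrightarrow H^{n-1}(\calO_E)$ of Steenbrink cited right before Example~\ref{ex:Gorenstein}; vanishing of the source forces vanishing of the target, and hence the analytic lattice cohomology is well-defined in the sense of Theorem~\ref{th:annlattinda}. For the computation of $\bH^*_{an}(X,o)$, I would combine the convention (stated right after Proposition~\ref{prop:cohcyc}) that $h^{n-1}(\calO_{\tX})=0$ forces $Z_{coh}=0$ with the $c$-independence in Lemma~\ref{lem:INDEPAN}, taking $c=0$. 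Then the rectangle collapses to the single lattice point $R(0,0)=\{0\}$, carrying weight $w_0(0)=0$. Hence for every $n\ge 0$ the sublevel set $S_n$ is this one point, so $H^0(S_n,\Z)=\Z$ and $H^{\ge 1}(S_n,\Z)=0$. Summing gives $\bH^0_{an}(X,o)=\bigoplus_{n\ge 0}\Z=\calt^+_0$ with vanishing reduced part, and $\bH^{\ge 1}_{an}(X,o)=0$. The graded root, being the tree of path-components of the $S_n$'s linked by inclusions, has a single vertex at every grading $n\ge 0$ joined by a single edge to the next level, which is precisely the bamboo $\RR_{(0)}$.

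For the converse the Euler characteristic does all the work. Theorem~\ref{th:euANLAT} gives $eu(\bH^*_{an}(X,o))=h^{n-1}(\calO_{\tX})$. On the other hand, the assumption $\bH^*_{an}(X,o)=\calt^+_0$ means that in the decomposition $\bH^0\simeq\calt^+_{2m_w}\oplus\bH^0_{red}$ from \ref{9complex} we have $m_w=0$ and $\bH^*_{red}=0$, so the definition of the Euler characteristic in \ref{9F} yields $eu(\calt^+_0)=-0+0=0$. Combining these two identities forces $h^{n-1}(\calO_{\tX})=0$.

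No serious obstacle arises; the one subtle point worth flagging is that $c=0$ is a legitimate choice in Lemma~\ref{lem:INDEPAN}. This is fine because the statement explicitly permits the range $Z_{coh}\le c\le\infty$, and when $c=Z_{coh}$ the retraction procedure in the proof has already terminated in the minimal rectangle, so no further reduction is required.
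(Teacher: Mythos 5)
Your proof is correct and follows essentially the same route as the paper: the Steenbrink surjection $H^{n-1}(\calO_{\tX})\twoheadrightarrow H^{n-1}(\calO_E)$ for the first assertion, the observation $Z_{coh}=0$ combined with the $c$-independence (so one may take $c=0$, collapsing the rectangle to $\{0\}$) for the computation of $\bH^*_{an}$, and Theorem~\ref{th:euANLAT} for the converse via the Euler characteristic. The only cosmetic difference is that you spell out $eu(\calt^+_0)=0$ from the definitions, whereas the paper simply asserts it.
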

\begin{proof} The first statement follows from the surjectivity of $H^{n-1}(\calO_{\tX})\to H^{n-1}(\calO_E)$. Next,
we have to show that $S_n=\emptyset$ for any $n<0$ and $S_n$ is contractible for any $n\geq 0$.
Since  $Z_{coh}=0$  the rectangle $R(0,Z_{coh})$ has a single lattice point $l=0$
with  $w_0(0)=0$.  Hence $S_n\sim S_n\cap R(0, Z_{coh})=\{0\}$.
Conversely,
$\bH^*_{an}(X,o)=\calt^+_{0}$ implies  that $\min(w_0)=0$ and $eu(\bH^*_{an})=0$,
hence $h^{n-1}(\calO_{\tX})=0$  by Theorem \ref{th:euANLAT}.
\end{proof}

\subsection{Weighted cubes and the Poincar\'e series $P(\bt)$.} \label{ss:Poinc}

\ Assume that $c=\infty$, i.e.
 $R(0,c)=L_{\geq 0}$.

Recall that $\hh(l)$ was defined for any $l\in L\geq 0$.  Let us extend this definition: define
 $\hh(l)$ for any $l\in L$ by $\hh(l)=\hh(\max\{0, l\})$. This is compatible with the fact that
 $H^0(\calO_{\tX}(-l))=H^0(\calO_{\tX}(\max\{0, l\})$.

Then the multivariable Hilbert series $H(\bt)=\sum_{l\in L}\hh(l)\bt^l$ determines the
multivariable   analytic Poincar\'e series $P(\bt)=\sum_{l}\pp(l)\bt^{l }$    (cf. \cite{CDG,CHR,NJEMS}) by
\begin{equation}\label{eq:4}
P(\bt)=-H(\bt)\cdot \prod_v(1-t_v^{-1}), \ \ \mbox{or} \ \
\pp(l' )=\sum_{I\subset \{1,\ldots, s\}}\, (-1)^{|I|+1}\hh(l'+E_I).
\end{equation}
Then one verifies (using  $\hh(l)=\hh(\max\{0, l\})$) that $P(\bt)$ is supported on $L_{\geq 0}$.
Furthermore,
 Theorem  \ref{th:comblattice} and (\ref{eq:4}) combined show that   the analytic Poincar\'e series
associated with the divisorial filtration of the local ring $\cO_{X,o}$
has the following interpretation
in terms of  the (analytic) weighted cubes $\square=(l,I)$:
$$P(\bt)=\sum_{l\geq 0}\,\sum _I\, (-1)^{|I|+1} w_{an}((l,I))\, \bt^{l}$$
whenever $h^{n-1}(\calO_E)=0$.

\begin{remark}\label{rem:san}
Let $u$ be   a vertex of $\RR_{an}(X,o)$ of valency one.
This means that it is a local minimum of $\chic$ with respect to the natural partial ordering
 given by the edges and $\chic$.
  Set $n:=\chic(u)$ and let $\calC=\calC^i_n$ be the connected component of $S_n$
  which represents $u$, cf. \ref{bek:GRootW}.
  Let $l_m\in L$ be the maximal element of $\calC$ with respect to $\leq $.
  (In fact, using the matroid rank inequality of $w_0$, cf. \ref{ss:ALCc}, one shows that $l_m$ is unique.)
  Then $w_0(l_m+E_v)>w_0(l_m)$ for any $v\in\calv$. By CDP we also obtain
  $\hh(l_m+E_v)>\hh(l_m)$ for any $v\in\calv$. In particular, there exists a function $f:(X,o)\to (\C,0)$ such that
  the restriction to $E$ of the divisor of $f\circ \phi$  is $l_m$. In other word,
  $l_m$ is in the analytic semigroup $\calS_{an}$ associated with $\phi$.
Hence, local minimums of $\RR_{an}(X,o)$  represent elements of $\calS_{an}$.
In this way we cannot represent all the elements of $\calS_{an}$, since by Lemma
\ref{lem:INDEPAN} $\calC$ must also contain a lattice point in $R(0,Z_{coh})$ too.

\end{remark}

\subsection{Analytic Reduction Theorem}\label{ss:anRT}

\bekezdes
Our next goal is to prove a `Reduction Theorem'.
Via such a result, the rectangle $R=R(0,c)$ can be replaced  by another rectangle sitting in a
lattice of smaller rank. The procedure starts with identification of a set of `bad' vertices.
More precisely,   we decompose $\calv$ as a disjoint union
$\overline{\calv}\sqcup \calv^*$, where the vertices $\overline {\calv}$ are the `essential' ones, the ones which dominate the others, and the coordinates $\calv^*$ are those which `can be eliminated'.
The goal is to replace the rectangle $R$ (or $\Z^s_{\geq 0}$)  with a rectangle of $\Z^{\bar s}$, with
$\bar{s}=|\overline{\calv}|$.

In the  topological case of surface singularities  the possible choice of $\overline{\calv}$ was dictated by combinatorial
properties of the Riemann--Roch expression $\chi$ (the topological weight function),
 with a special focus on the topological characterization of rational germs \cite{NOSz,LN1}.
In the analytical case of surface singularities we used certain
  analytic properties of 2--forms \cite{AgNe1}.
  The present  high--dimensional case is a direct  generalization  of this.

\bekezdes Let $(X,o)$ be an isolated singularity of dimension $n\geq 2$, and
we fix a good resolution $\phi$ as above.

\begin{definition}\label{def:DOMAN}
We say that the subset  $\overline{\calv}$ of $\calv$  is an B$_{an}$--set
if it satisfy the following
property: if 
some differential form
$\omega\in H^0(\tX\setminus E, \Omega_{\tX}^n) $ satisfies
$({\rm div}_E\omega) |_{\overline{\calv}}\geq -E_{\overline{\calv}}$ \
then necessarily
$\omega\in H^0(\tX, \Omega_{\tX}^n)$.
By (\ref{eq:lL}) this is equivalent with the vanishing  $h^1(\calO_Z)=0$ for any
$Z=E_{\overline{\calv}}+l^*$, where $l^*\geq 0$ and it is supported on $\calv^*$.
\end{definition}

\bekezdes Associated with a disjoint  decomposition  $\calv=\overline{\calv}\sqcup \calv^*$,  we write
any  $l\in L$
as $\overline{l}+l^*$, or $(\overline{l}, l^*)$,
where $\overline {l}$ and  $l^*$ are  supported on $\overline{\calv}$ and
 $\calv^*$ respectively. We also write  $\overline{R}$ for the rectangle $R(0, \overline{c})$, the $\overline{\calv}$-projection of $R(0,c)$ with $c\geq Z_{coh}$.

For any $\overline {l}\in \overline {R}$ define  the weight function
$$\overline{w}_0(\overline{l})=\hh(\overline{l})+\hh^\circ (\overline{l}+c^*)-h^{n-1}(\calO_{\tX})
=\hh(\overline{l})-h^1(\calO_{\overline{l}+c^*}).$$
Consider all the cubes of $\overline{R}$ and the weight function
$\overline{w}_q:\calQ_q(\overline{R})\to \Z$ by $ \overline{w}_q(\square_q)=\max\{w_0(\overline{l})\,:\, \overline{l} \
 \mbox{\,is any vertex of $\square_q$}\}$.

\begin{theorem}\label{th:REDAN} {\bf Reduction theorem for the analytic lattice cohomology.}
If  $\overline{\calv}$ is an B$_{an}$-set
then
$$\bH^*_{an}(R,w)=\bH^*_{an}(\overline{R}, \overline{w}).$$
\end{theorem}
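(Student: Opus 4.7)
The plan is to show that for every $n$ the natural cellular projection $\pi\colon R \to \overline{R}$, $l = \overline{l}+l^* \mapsto \overline{l}$, induces a homotopy equivalence between $S_n \subset R$ and $\overline{S}_n \subset \overline{R}$; once this is established, passing to cohomology and summing over $n$ gives $\bH^*_{an}(R,w) = \bH^*_{an}(\overline{R},\overline{w})$. The heart of the argument is a weight comparison together with a use of the $B_{an}$-property to exhibit, for each $\overline{l} \in \overline{R}$, a preferred lift in the fibre of $\pi$ realising the weight $\overline{w}_0(\overline{l})$.

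Substituting $\hh^\circ(l) = h^{n-1}(\calO_{\tX}) - h^{n-1}(\calO_l)$ into the definition of $\overline{w}_0$ and comparing with $w_0$ yields
\[
w_0(\overline{l}+l^*) - \overline{w}_0(\overline{l}) = \bigl[\hh(\overline{l}+l^*) - \hh(\overline{l})\bigr] + \bigl[h^{n-1}(\calO_{\overline{l}+c^*}) - h^{n-1}(\calO_{\overline{l}+l^*})\bigr] \geq 0,
\]
both brackets being nonnegative, so $\pi$ maps $S_n \cap \Z^s$ into $\overline{S}_n \cap \Z^{\bar s}$. To realise the equality, I introduce $A(\overline{l}) := \{\,l^* \in [0, c^*] \cap L^* : \hh(\overline{l}+l^*) = \hh(\overline{l})\,\}$. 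The matroid rank inequality (\ref{eq:matroid}) implies that $A(\overline{l})$ is closed under coordinate-wise $\max$ and contains $0$; hence it admits a unique maximum $l^*_{\mathrm{sat}}$. For every $v \in \calv^*$ with $l^*_{\mathrm{sat},v} < c^*_v$, maximality gives $\hh(\overline{l}+l^*_{\mathrm{sat}}+E_v) > \hh(\overline{l})$, which produces a function $f_v \in H^0(\calO_{\tX}(-\overline{l}))$ with ${\rm div}_E f_v \geq \overline{l}+l^*_{\mathrm{sat}}$ and $({\rm div}_E f_v)_v = l^*_{\mathrm{sat},v}$.

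The $B_{an}$-hypothesis is invoked exactly as in Case~3 of the proof of Theorem~\ref{th:annlattinda}: if there existed $\omega \in H^0(\Omega^n_{\tX}(\overline{l}+c^*))$ with $({\rm div}_E \omega)_v < -l^*_{\mathrm{sat},v}$ for some such $v$, then the product $f_v \omega$ would lie in $H^0(\Omega^n_{\tX}(c^*))$, hence a fortiori in $H^0(\Omega^n_{\tX}(E_{\overline{\calv}}+c^*))$. By the defining property of $\overline{\calv}$, equivalently $h^{n-1}(\calO_{E_{\overline{\calv}}+c^*}) = 0$, this would force $f_v\omega \in H^0(\Omega^n_{\tX})$, contradicting the pole of $f_v\omega$ that persists along $E_v$. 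Consequently $h^{n-1}(\calO_{\overline{l}+c^*}) = h^{n-1}(\calO_{\overline{l}+l^*_{\mathrm{sat}}})$, and together with $l^*_{\mathrm{sat}} \in A(\overline{l})$ this gives $w_0(\overline{l}+l^*_{\mathrm{sat}}) = \overline{w}_0(\overline{l})$.

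Applied at every vertex of each cube $\overline{\square} \subset \overline{S}_n$, the same analysis produces a preferred lift inside $\pi^{-1}(\overline{\square}^\circ) \cap S_n$; this preimage is seen to be non-empty and contractible via a strong deformation retract in the $\calv^*$-directions onto that lift, precisely when $\overline{w}(\overline{\square}) \leq n$. The quasifibration criterion of \cite{DoldThom}, invoked as at the end of the proof of Theorem~\ref{th:annlattinda}, then produces the homotopy equivalence $\pi\colon S_n \to \overline{S}_n$ for every $n$, completing the argument. The main obstacle is the upgrade step above, from the local CDP-stability of $h^{n-1}$ at $l^*_{\mathrm{sat}}$ to the global identity with $h^{n-1}(\calO_{\overline{l}+c^*})$: it is precisely here that the $B_{an}$-property is indispensable, since the opposite matroid rank inequality satisfied by $h^{n-1}$ alone cannot propagate such pointwise identities.
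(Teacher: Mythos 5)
Your proposal captures the heart of the argument: the $B_{an}$-property is used, via a function/form pole cancellation, to force the weight on a fibre of $\pi$ to attain its minimum $\overline{w}_0(\overline{l})$ at a lift, and the map is then shown to be a quasifibration with contractible fibres. That computation (the submodularity giving a unique $l^*_{\mathrm{sat}}$, the functions $f_v$, and the product $f_v\omega$ contradicting $h^{n-1}(\calO_{E_{\overline{\calv}}+c^*})=0$) is correct and is indeed the same mechanism the paper uses.

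Where the two approaches genuinely diverge is in the organization, and this is where a gap appears. The paper eliminates the vertices of $\calv^*$ one at a time, and at each step $\calI\subset\cJ=\calI\cup\{v_0\}$ it replaces the weight by the intermediate function $w_\calI(l_\calI)=\hh(l_\calI)+\hh^\circ(l_\calI+c_{\calI^*})-h^{n-1}(\calO_{\tX})$, whose $\hh^\circ$-argument is \emph{pre-saturated} by $c_{\calI^*}=Z_{coh}|_{\calI^*}$. This is not cosmetic: it makes the one-dimensional fibre $t\mapsto w_\cJ(y+tE_{v_0})$ manifestly V-shaped (decreasing, then constant on $[t_0^\circ,t_0]$, then increasing), because for $t\ge t_0^\circ$ the $\calJ^*$-constraints are already absorbed by $c_{\calJ^*}$ so $\hh^\circ$ is genuinely stabilized; contractibility of the fibre over a point, and then over a cube after verifying the intervals share $t_0(y)$, is immediate. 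Your proposal instead collapses all of $\calv^*$ at once and retains the original $w_0$ on $R$, where $h^{n-1}(\calO_{\overline{l}+l^*})$ is \emph{not} pre-saturated in the $\calv^*$-directions; the V-shape in each $\calv^*$-direction is then no longer automatic (neither $\hh$ nor $h^{n-1}(\calO_\cdot)$ has same-direction stability by itself), and the fibre $\pi^{-1}(\overline{\square}^\circ)\cap S_n$ is now a union of cubes in a multi-dimensional box. The assertion that this preimage ``is seen to be contractible via a strong deformation retract'' hides exactly the two points that need proof: (i) V-shape of $w_0$ on each $\calv^*$-line through the fibre (this can be extracted from $B_{an}$ by a second $f\omega$ argument, applied to a putative jump of $\hh$ at $t$ followed by a jump of $h^{n-1}$ at $t'>t$, but you do not do this); and (ii) the compatibility of the bottoms of these V's across all $\calv^*$-directions and across the $2^{|I|}$ vertices of $\overline{\square}$ (this is where the paper shows that $t_0(y)$ is a common point of all intervals $[t_0^\circ(y+E_{I'}),t_0(y+E_{I'})]$). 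As written, the proposal proves only that the minimum of $w_0$ over a single fibre equals $\overline{w}_0$, which is not enough to conclude contractibility. The all-at-once route can likely be made to work with these two extra $B_{an}$ applications, but they are precisely the content the inductive structure of the paper's proof is designed to supply.
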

\begin{proof}
For any $\cali\subset \calv$ write $c_\cali$ for the $\cali$-projection of $c=Z_{coh}$.
We proceed by induction, the proof will be given in $|\calv^*|$ steps.
For any $\overline{\calv}\subset \cali\subset \calv$ we create the inductive setup.
We write $\cali^*=\calv\setminus \cali$, and according to the disjoint union
$\cali\sqcup \cali^*=\calv$ we  consider the coordinate decomposition
$l=(l_\cali,l_{\cali^*})$. We also set $ R_\cali=R(0, c_\cali)$ and the weight function
\begin{equation}\label{eq:WBAR}
w_\cali(l_\cali)=\hh(l_\cali)+\hh^\circ(l_\cali+c_{\cali^*})-h^{n-1}(\calO_{\tX}).
\end{equation}
Then  for $\overline{\calv}\subset \cali\subset \cJ\subset \calv$, $\cJ=\cali\cup \{v_0\}$
($v_0\not\in\cali$),
we wish to prove that $\bH^*_{an}(R_\cali, w_\cali)=\bH^*_{an}(R_{\cJ}, w_{\cJ})$.
For this consider the projection $\pi_{\R}:R_{\cJ}\to R_{\cali}$.

For any fixed $y\in R_\cali$ consider the fiber $\{y+tE_{v_0}\}_{0\leq t\leq c_{v_0},\ t\in \Z}$.

Note that $t\mapsto \hh(y+tE_{v_0})$  is increasing. Let $t_0=t_0(y)$
be the smallest value $t$ for which
$\hh(y+tE_{v_0})< \hh(y+(t+1)E_{v_0})$.
If $t\mapsto \hh(y+tE_{v_0})$ is constant then we take $t_0=c_{v_0}$.
If $t_0<c_{v_0}$, then $t_0$ is characterized by the existence of a function
\begin{equation}\label{eq:1RED}
f\in H^0(\calO_{\tX}) \ \ \mbox{with} \ \
({\rm div}_Ef)|_\cali\geq y, \ \ \  ({\rm div}_Ef)_{v_0}=t_0.
\end{equation}
Symmetrically,  $t\mapsto \hh^{\circ} (y+c_{\cJ^*}+ tE_{v_0})$  is decreasing. Let $t_0^\circ=t_0^\circ (y)$
be the smallest value $t$ for which
$\hh^{\circ} (y+c_{\cJ^*}+tE_{v_0})=\hh^{\circ} (y+c_{\cJ^*}+(t+1)E_{v_0})$. The value
$t_0^\circ$  is characterized by the existence of a form
\begin{equation}\label{eq:2RED}
\omega \in H^0(\tX\setminus E,\Omega_{\tX}^n) \ \ \mbox{with} \ \
({\rm div}_E\omega) |_\cali\geq - y, \ \ \  ({\rm div}_E\omega )_{v_0}=-t^{\circ}_0.
\end{equation}
This shows that there exists a form $f\omega\in H^0(\tX\setminus E, \Omega_{\tX}^n)$ such that
$({\rm div}_Ef\omega )|_\cali\geq 0$ and $({\rm div}_Ef\omega )_{v_0}=t_0-t^{\circ}_0$.
By the B$_{an}$ property we necessarily must have $t_0-t^{\circ}_0\geq 0$.
Therefore, the weight $t\mapsto w_{\cJ}(y+tE_{v_0})=\hh(y+tE_{v_0})+\hh^{\circ } (y+tE_{v_0}+c_{\cJ^*})-h^{n-1}(\calO_{\tX})$
is decreasing for $t\leq t_0^\circ$, is increasing for $t\geq t_0$. Moreover, for $t_0^\circ \leq t\leq t_0$
it  takes the
constant value $\hh(y)+\hh^{\circ } (y+c_{v_0}E_{v_0}+c_{\cJ^*})-h^{n-1}(\calO_{\tX})=w_{\cali}(y)$.

Next we fix $y\in R_\cali$ and some $I\subset \cali$ (hence a cube $(y,I)$ in $R_{\cali}$).
We wish to compare the intervals
$[t_0^\circ (y+E_{I'}), t_0 (y+E_{I'})]$ for all subsets $I'\subset I$. We claim that they have at least one
common element (in fact, it turns out that $t_0(y)$ works).

Note that $\hh(y+tE_{v_0})=\hh(y+(t+1)E_{v_0})$ implies $\hh(y+tE_{v_0}+E_{I'})=\hh(y+(t+1)E_{v_0}+E_{I'})$
for any $I'$,
hence $t_0(y)\leq t_0(y+E_{I'})$. In particular, we need to prove that $t_0(y)\geq t_0^\circ (y+E_{I'})$.
Similarly as above, the value $t_0^\circ(y+E_{I'})$  is characterized by the existence of a form
\begin{equation*}
\omega_{I'} \in H^0(\tX\setminus E, \Omega_{\tX}^n) \ \ \mbox{with} \ \
({\rm div}_E\omega_{I'}) |_\cali\geq - y-E_{I'}, \ \ \  ({\rm div}_E\omega_{I'} )_{v_0}=-t^{\circ}_0(y+E_{I'}).
\end{equation*}
Hence the  from $f\omega_{I'}\in H^0(\tX\setminus E, \Omega_{\tX}^n)$ satisfies
${\rm div}_Ef\omega_{I'} |_\cali\geq -E_{I'}$ and $({\rm div}_Ef\omega )_{v_0}=t_0(y)-t^{\circ}_0(y+E_{I'})$.
By the B$_{an}$ property we must have $t_0(y)-t^{\circ}_0(y+E_{I'})\geq 0$.

Set $S_{\cJ,n}$ and $S_{\cali,n}$ for the lattice spaces defined by $w_\cJ$ and $w_\cali$. If
$y+tE_{v_0}\in S_{\cJ,n}$ then $w_\cJ(y+tE_{v_0})\leq n$, hence  by the above discussion $w_\cali(y)\leq n$ too.
In particular, the projection $\pi_{\R}:R_\cJ\to R_\cali$ induces a map $S_{\cJ,n}\to S_{\cali,n}$.
We claim  that it is a homotopy equivalence. The argument is similar to the proof from
\ref{th:annlattinda} via the above preparations.
\end{proof}

\begin{corollary}\label{cor:AR} If $(X,o)$ admits a resolution $\phi$ with a B$_{an}$--set of cardinality $\overline{s}$,
then  $\bH^{\geq \overline{s}}_{an}(X,o)=0$.
\end{corollary}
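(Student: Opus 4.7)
The plan is to apply the Reduction Theorem \ref{th:REDAN} and then reduce to a purely topological fact about cubical subcomplexes of $\R^{\overline{s}}$. By Theorem \ref{th:REDAN}, since $\overline{\calv}$ is a B$_{an}$-set of cardinality $\overline{s}$, we have the identification
\[
\bH^*_{an}(X,o) \;=\; \bH^*_{an}(R,w) \;=\; \bH^*_{an}(\overline{R},\overline{w}),
\]
where $\overline{R} = R(0,\overline{c})$ sits inside the rank-$\overline{s}$ lattice spanned by $\{E_v\}_{v\in\overline{\calv}}$. By construction,
\[
\bH^q_{an}(\overline{R},\overline{w}) \;=\; \bigoplus_{n\ge m_{\overline{w}}} H^q(S_n\cap \overline{R},\Z),
\]
so the claim reduces to showing that $H^q(S_n\cap \overline{R},\Z)=0$ for every $n$ and every $q\ge \overline{s}$.

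For $q>\overline{s}$ this is immediate: $S_n\cap \overline{R}$ is a finite cubical complex of dimension at most $\overline{s}$, so its singular cohomology vanishes in degrees strictly greater than $\overline{s}$.

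The critical case is $q=\overline{s}$. Here the key observation is that $S_n\cap \overline{R}$ is a compact (in particular bounded) subset of $\R^{\overline{s}}$. Embedding $\R^{\overline{s}}\hookrightarrow S^{\overline{s}}$ by one-point compactification, the complement $S^{\overline{s}}\setminus(S_n\cap \overline{R})$ is nonempty, since it contains the point at infinity. Applying Alexander duality in $S^{\overline{s}}$ (and using $\overline{s}\ge 1$, which holds because any B$_{an}$-set is nonempty as soon as $h^{n-1}(\calO_{\tX})\neq 0$; the case $h^{n-1}(\calO_{\tX})=0$ is already covered by Lemma \ref{lemma:UJ}),
\[
H^{\overline{s}}(S_n\cap \overline{R},\Z)\;\cong\;\widetilde{H}_{-1}\bigl(S^{\overline{s}}\setminus(S_n\cap \overline{R}),\Z\bigr)\;=\;0.
\]

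The only subtlety — and the main obstacle — is precisely this $q=\overline{s}$ case: dimension alone forces vanishing for $q>\overline{s}$, but the vanishing in top degree $q=\overline{s}$ genuinely uses that the cubical complex $S_n\cap \overline{R}$ is embedded as a proper compact subspace of an $\overline{s}$-dimensional ambient Euclidean space, with non-trivial complement. Summing over $n$, both vanishings together give $\bH^{\geq \overline{s}}_{an}(X,o)=0$, as claimed.
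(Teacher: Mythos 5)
The proof proposal is correct, and since the paper records Corollary \ref{cor:AR} without a written argument (treating it as immediate from Theorem \ref{th:REDAN}), your write-up usefully spells out what the authors leave tacit. The structure is exactly the natural one: invoke the Reduction Theorem to pass to a rank-$\overline{s}$ lattice, kill $q>\overline{s}$ by dimension, and kill the top degree $q=\overline{s}$ by Alexander duality in the one-point compactification $S^{\overline{s}}$. You are right that the top-degree case is the only genuine content; the Alexander duality step (\,$H^{\overline{s}}(S_n\cap\overline{R})\cong\widetilde H_{-1}(S^{\overline{s}}\setminus(S_n\cap\overline R))=0$ since the complement is nonempty\,) is the standard way to see that a finite cubical subcomplex of $\R^{\overline s}$ has vanishing $\overline s$-th cohomology, and that applies uniformly whether $S_n\cap\overline R$ is empty, a proper subcomplex, or all of $\overline R$.

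The one spot that deserves sharpening is the $\overline{s}=0$ edge case, which you address only in passing. If $h^{n-1}(\calO_{\tX})=0$, the empty set \emph{is} a B$_{an}$-set (the defining condition becomes exactly $H^0(\tX\setminus E,\Omega^n_{\tX})=H^0(\tX,\Omega^n_{\tX})$), and then the corollary's literal claim $\bH^{\geq 0}_{an}(X,o)=0$ is false: by Lemma \ref{lemma:UJ}, $\bH^0_{an}(X,o)=\calt^+_0\neq 0$. So the statement should be read with $\overline{s}\geq 1$ implicit, or equivalently as $\bH^q_{an}(X,o)=0$ for $q\geq\max(\overline{s},1)$; your parenthetical remark that the $h^{n-1}(\calO_{\tX})=0$ case ``is covered by Lemma \ref{lemma:UJ}'' only yields $\bH^{\geq 1}_{an}=0$, which is what one actually wants. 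This is a defect of the corollary's phrasing rather than of your argument, but it is worth stating explicitly rather than folding into a parenthesis that leaves the reader to reconcile a false literal claim.
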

\begin{example}\label{ex:Gorenstein2}
Assume that $(X,o)$ is Gorenstein, cf. Example \ref{ex:Gorenstein}.
Let $\overline{\calv}$ be a B$_{an}$--set  and assume that $Z_K|_{\overline{\calv}}\geq 0$.
Since $Z_{coh}\leq Z_{K,+}$ (cf. \cite{Ishii}), we can take $c=Z_{K,+}$. Then, for any
$\overline{l}\in R(0, Z_{K,+})$,   the
$\hh^\circ$--contribution $ \hh^\circ (\overline{l}+Z_{K,+}^*)$
in $\overline {w}_0$ is
$$
\dim\, \frac{H^0(\tx, \Omega^n_{\tX}(Z_{K,+}))}
{H^0(\tX,\Omega^n _{\tX}(\overline{l}+Z_{K,+}^*))}=
\dim\, \frac{H^0(\tx, \calO_{\tX}(-Z_K+Z_{K,+}))}
{H^0(\tX,\calO_{\tX}(-Z_K+\overline{l}+Z_{K,+}^*))}=
\dim\, \frac{H^0(\tX, \calO_{\tX}(-Z_{K,-}))}
{H^0(\tX,\calO_{\tX}(-Z_{K,-}+\overline{l}-\overline{Z_{K}}))}.
$$
Note that $-Z_{K,-}\geq 0$. On the other hand, for any $a\geq 0$ we have
\begin{equation}\label{eq:van1}
H^0(\tX,\calO_{\tX}(a))=H^0(\tX,\calO_{\tX}).
\end{equation}
Indeed, using the exact sequence $0\to \calO_{\tX}\to \calO_{\tX}(a)\to\calO_a(a)\to 0$ it is enough
to prove that $H^0(\calO_a(a))=0$. But this follows by Serre duality and Grauert--Riemenschneider
vanishing. Next, note that for $a,b\geq 0$,  both supported on $E$ but without common $E_v$--term
 in their supports,  one has
$$H^0(\tX,\calO_{\tX}(a-b))=H^0(\calO_{\tX}(-b))\cap H^0(\calO_{\tX}(a))
\stackrel{(\ref{eq:van1})}{=}
H^0(\calO_{\tX}(-b))\cap H^0(\calO_{\tX})=
H^0(\calO_{\tX}(-b)).$$
In particular,
$$ \hh^\circ (\overline{l}+Z_{K,+}^*)=\dim\, \frac{H^0(\tX, \calO_{\tX})}
{H^0(\tX,\calO_{\tX}(\overline{l}-\overline{Z_{K}}))}=
\hh(\overline{Z_K}-\overline{l}).$$
Therefore, the weight function $\overline{w}_0$ on $R(0, \overline{Z_K})$ is
\begin{equation}\label{eq:sym}
\overline{w}_0(\overline{l})= \hh(\overline{l})+\hh(\overline{Z_K}-\overline{l})- h^{n-1}(\calO_{\tX}).\end{equation}
That is, $\overline{w}_0$ is obtained by the symmetrization of the restriction of $\hh$ to
$R(0, \overline{Z_K})$.
\end{example}

\bekezdes Under the assumption $h^{n-1}(\calO_E)=0$, if $(X,o)$ is Gorenstein, then
B$_{an}$--sets $\overline {\calv}$
with $Z_K|_{\overline{\calv}}\geq 0$ exist
 for any  good resolution $\tX\to X$.  Indeed, we have the following fact.

\begin{lemma}\label{lem:exists}
If $(X,o)$ is Gorenstein and
$h^{n-1}(\calO_E)=0$ then the support of $ Z_{K,+} $ is a B$_{an}$--set.
\end{lemma}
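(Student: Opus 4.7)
The plan is to verify the equivalent formulation of the B$_{an}$ property recalled in Definition \ref{def:DOMAN}: one must show $h^{n-1}(\calO_Z)=0$ for every $Z$ of the form $E_{\overline{\calv}}+l^{*}$ with $l^{*}\geq 0$ supported on $\calv^{*}:=\calv\setminus\overline{\calv}$, where $\overline{\calv}={\rm supp}(Z_{K,+})$. The key idea is to exploit the hypothesis $h^{n-1}(\calO_E)=0$ together with a tight control on the support of the cohomology cycle that is available precisely because $(X,o)$ is Gorenstein.

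First I would dispose of the trivial case $h^{n-1}(\calO_{\tX})=0$, in which monotonicity of $l\mapsto h^{n-1}(\calO_l)$ immediately gives $h^{n-1}(\calO_Z)\leq h^{n-1}(\calO_{\tX})=0$. So assume $h^{n-1}(\calO_{\tX})>0$, whence $Z_{coh}>0$. The crucial input here is Ishii's inequality recalled in Example \ref{ex:Gorenstein}: in the Gorenstein setting $Z_{coh}\leq Z_{K,+}$, which forces ${\rm supp}(Z_{coh})\subseteq\overline{\calv}$ and hence $(Z_{coh})_v=0$ for every $v\in\calv^{*}$.

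Armed with this, I would run a coefficient-by-coefficient comparison showing $\min(Z,Z_{coh})=\min(E,Z_{coh})$: on $\calv^{*}$ both minima vanish since $(Z_{coh})_v=0$, while on $\overline{\calv}$ the coefficients of both $Z=E_{\overline{\calv}}+l^{*}$ and of $E$ equal $1$, so $\min(Z_v,(Z_{coh})_v)=\min(1,(Z_{coh})_v)=\min(E_v,(Z_{coh})_v)$. Two applications of Corollary \ref{cor:matr} then yield
\[
h^{n-1}(\calO_Z)\;=\;h^{n-1}(\calO_{\min(Z,Z_{coh})})\;=\;h^{n-1}(\calO_{\min(E,Z_{coh})})\;=\;h^{n-1}(\calO_E)\;=\;0,
\]
which is exactly the desired vanishing.

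The only real obstacle I anticipate is recognizing that one must invoke Ishii's bound $Z_{coh}\leq Z_{K,+}$ to pin ${\rm supp}(Z_{coh})$ inside $\overline{\calv}$; without that, an $l^{*}$ supported on $\calv^{*}$ could drive $\min(Z,Z_{coh})$ away from $\min(E,Z_{coh})$ and the reduction to $\calO_E$ would break down. Everything else is routine bookkeeping riding on top of the opposite matroid rank inequality already packaged into Corollary \ref{cor:matr}.
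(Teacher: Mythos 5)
Your proof is correct and follows essentially the same route as the paper: both hinge on Ishii's inequality $Z_{coh}\leq Z_{K,+}$ to confine ${\rm supp}(Z_{coh})$ inside $\overline{\calv}$ and then invoke Corollary~\ref{cor:matr} to reduce the vanishing for $Z=E_{\overline{\calv}}+l^*$ to the hypothesis $h^{n-1}(\calO_E)=0$. Your version is slightly more explicit (you record the identity $\min(Z,Z_{coh})=\min(E,Z_{coh})$ and apply the corollary symmetrically), but this is a matter of presentation, not a genuinely different argument.
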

\begin{proof} Denote the support of $Z_{K,+}$ by $I$.
Assume that $h^{n-1}(\calO_{E_I+l^*})\not=0$, where $l^*\geq 0$ and it is supported on $\calv\setminus I$. But by Corollary \ref{cor:matr} we also have
$h^{n-1}(\calO_{E_I+l^*})=h^{n-1}(\calO_{\min\{ E_I+l^*, Z_{coh}\}})$. Since
$Z_{coh}\leq Z_{K,+} $ \cite{Ishii}, we get  that $h^{n-1}(\calO_{E_I})\not=0$.
But this contradicts   $h^{n-1}(\calO_E)=0$.
\end{proof}

\begin{remark}\label{rem:exists}
Assume that $n=2$, $h^{n-1}(\calO_E)=0$, and $(X,o)$ is normal but  not  necessarily Gorenstein.
Then the rational cycle $Z_K\in L\otimes \Q$ can be defined, it is the (anti)canonical cycle,
numerically equivalent with $K_{\tX}$ (see e.g. \cite{AgNe1}).
Then again $Z_{coh}\leq\lfloor Z_{K,+} \rfloor$ (see e.g \cite{AgNe1}), hence the very same proof
gives the following: if
$h^{n-1}(\calO_E)=0$ then the support $\lfloor Z_{K,+} \rfloor$ is a B$_{an}$--set.

 If we choose  $\overline{\calv}$ as the support of $\lfloor Z_{K,+} \rfloor$,
and we also take $c=\lfloor Z_{K,+} \rfloor$, then in (\ref{eq:WBAR}) $c^*=0$ and
$$\overline{w}_0(\overline{l})=\hh(\overline{l})+\hh^\circ (\overline{l})-\hh^\circ (0)$$
for any $\overline{l}\in R(0,c)$.
\end{remark}
\section{$h^{n-1}(\calO_E)$ and the cohomology of the link}

\subsection{$h^{n-1}(\calO_{\tX})$ and the geometric genus}\label{ss:CM} \

Let $(X,o)$ be an isolated complex singularity  and let $\phi:\tX\to X$ be a good resolution as above.
Let $\overline{\calO_{X,o}}$ be the normalization of $\calO_{X,o}$ and let $\delta(X,o)$ be
the delta invariant $\dim \, (\overline{\calO_{X,o}}/\calO_{X,o})$ of $(X,o)$.
Note that $\overline{\calO_{X,o}}$ can also be identified with $(\phi_*\calO_{\tX})_o$.
As usual, we define the {\it geometric genus} by
$$(-1)^{n+1}p_g(X,o):= \delta(X,o)+\sum_{i\geq 1}(-1)^i\, h^i(\tX,\calO_{\tX}).$$
The following facts are well--known \cite{YauTwo,Karras}

\vspace{1mm}

(i) \ $h^i(\tX, \calO_{\tX})=0$ for $i\geq n$;

(ii) \ $h^{n-1}(\tX, \calO_{\tX})=\dim\,
( H^0(\tX\setminus  E, \Omega^n_{\tX})/ H^0(\tX, \Omega^n_{\tX}))$, cf. (\ref{eq:lL});

(iii) \  $h^i(\tX, \calO_{\tX})=\dim\, H^{i+1}_{\{o\}} (X,\calO_{X})$ for $1\leq i\leq n-2$;

(iv) \ If $(X,o)$ is Cohen--Macaulay then $(X,o)$ is normal  and $h^i(\tX,\calO_{\tX})=0$ for
$1\leq i\leq n-2$. In particular, $p_g(X,o)=h^{n-1}(\tX, \calO_{\tX})$.
(For this use the characterization of the Cohen--Macaulay property in terms of the local cohoomlogy,
cf. part (iii).)

\bekezdes Recall that $(X,o)$ is called {\it rational} if $(X,o)$ is normal and
$h^i(\tX,\calO_{\tX})=0$ for
$i\geq 1$. If $(X,o)$ is Cohen--Macaulay and $h^{n-1}(\calO_E)=0$ then $(X,o)$ is rational if and only if
$\bH^*_{an}(X,o)=\calt ^+_0$, cf. Lemma \ref{lemma:UJ}.

\subsection{$h^{n-1}(\calO_{E})$ and the link of $(X,o)$} \

Recall that all the (local) cohomology groups $H^*_{\{x\}}(X)$, $H^*_E(\tX)$ and $H^*(E)$ admit mixed Hodge structures. The Hodge filtration will be denoted by $F^{\cdot}$.
By \cite[Corollary 1.2]{Steen} we have the following short exact sequence
\begin{equation}\label{H1}
0\to {\rm Gr}^n_FH^{n+1}_{\{x\}}(X)\to {\rm Gr}^n_FH^{n+1}_E(\tX)\to {\rm Gr}^n_FH^{n+1}(E,\C)\to 0.
\end{equation}
Let $M$ denote the link of $(X,o)$, an oriented $(2n-1)$--dimensional compact manifold.
By \cite[Corollary 1.15]{Steen} we have an isomorphism $H^{k+1}_{\{x\}}(X)=H^k(M,\C)$ for any $1\leq k\leq 2n-2$, which is compatible with  the mixed Hodge structures. Furthermore, by \cite[page 516]{Steen}
we also have an isomorphism $H^{n-1}(E, \calO_E)={\rm Gr}^0_FH^{n-1}(E,\C)$, which is dual to
${\rm Gr}^{n}_FH^{n+1}_E(\tX)$.
Therefore, we have an exact sequence
\begin{equation}\label{H2}
0\to {\rm Gr}^n_FH^{n}(M)\to H^{n-1}(\calO_E)^*\to {\rm Gr}^n_FH^{n+1}(E,\C)\to 0.
\end{equation}
Hence, the vanishing $h^{n-1}(\calO_E)=0$ implies the vanishing ${\dim\, \rm Gr}^n_FH^{n}(M)=0$.

\subsection{$h^{n-1}(\calO_E)$ and a smoothing of $(X,o)$}

Assume that $({\mathcal X},o)$ is a complex space of dimension $n+1$ with at most an isolated singularity at $o$. Let $f:({\mathcal X},o)\to (\C,0)$ be a flat map such that $(f^{-1}(0),o)\simeq (X,o)$.
We also assume that both ${\mathcal X}$ and $X$ are contractible Stein spaces and
$f$ induces a differentiable locally trivial fibration over a small punctured disc.
Let $X_\infty$ be the
nearby (Milnor) fiber $f^{-1}(t)$ for $t\not=0$ sufficiently small. It is an $n$--dimensional complex Stein manifold.
 Note that $\partial X_\infty\simeq M$.

 Then $H^*(X_\infty)$ carries a mixed Hodge structure. Then by \cite[Proposition 2.13]{Steen} $\dim \,
 {\rm Gr}_F^nH^n(X_\infty)=p_g(X,o)$. Furthermore, by \cite[Proposition 1.15]{Steen}
\begin{equation}\label{H3}
h^{n-1}(\calO_E)=p_g(X,o)-\dim\, {\rm Gr}^0_FH^n(X_\infty)+\dim\, {\rm Gr}^0_FH^{n-1}(X_\infty).
\end{equation}
If $\widetilde{{\mathcal X}}\to {\mathcal X}$ is an embedded good resolution of the pair
$({\mathcal X}, X)$ with exceptional space ${\mathcal E}\subset \widetilde{{\mathcal X}}$ then by \cite[page 526]{Steen} we also have
${\rm Gr}^0_F H^k(X_\infty)\simeq H^k(\calO_{{\mathcal E}})$.
Hence we have
\begin{equation}\label{H4}
h^{n-1}(\calO_E)=\dim \,
 {\rm Gr}_F^nH^n(X_\infty)-\dim\, {\rm Gr}^0_FH^n(X_\infty)+h^{n-1}(\calO_{{\mathcal E}}).
\end{equation}
E.g., if $(X,o)$ is Cohen--Macaulay (e.g. if it is complete intersection), then $h^{n-1}(\calO_{{\mathcal E}})=0$ (cf. \cite[Corollary 2.16]{Steen}), hence  we have a description of $h^{n-1}(\calO_E)$ in terms of the Hodge filtration of
$H^n(X_\infty)$.

\subsection{$h^{n-1}(\calO_E)$ and $h^{n-1}(\calO_{\tX})$ for hypersurface singularities}\

Assume that $(X,o)$ is an isolated hypersurface singularity with Milnor number $\mu$. Then the (Hodge)
spectrum  consists of $\mu$ rational numbers in the interval $(0, n+1)$. Their position is symmetric
with respect to $(n+1)/2$.
The number of spectral numbers in $(0,1]$ (or, symmetrically, in $[n,n+1)$) is $p_g(X,o)=h^{n-1}(\calO_{\tX})$.
More generally, the dimension of ${\rm Gr}^k_F H^n(X_\infty)$ is the number of spectral numbers in the interval $[k,k+1)$. Hence,  $\dim\,{\rm Gr}^0_F H^n(X_\infty)$ is the number of spectral numbers in the interval $(0,1)$. Since in this case $h^{n-1}(\calO_{{\mathcal E}})=0$,
(\ref{H4}) gives that
\begin{equation}\label{eq:spec}
h^{n-1}(\calO_E)=\mbox{\{number of spectral numbers $=1$\}}=\dim\,  {\rm Gr}^n_FH^n(M).
\end{equation}
In particular, if $M$ is a rational homology sphere  then $h^{n-1}(\calO_E)=0$.

\section{Examples}\label{s:Example}

\subsection{Isolated weighted homogeneous hypersurface singularities}\label{ss:IHS}

\bekezdes Assume that $(X,o)\subset (\C^{n+1},0)$ is defined by  a weighted homogeneous
polynomial $f(z_0,\ldots, z_n)$ of weights $(w_0,\ldots, w_n)$ and degree $d$.
Here $w_i\in\Z_{>0}$, ${\rm gcd}(w_0,\ldots, w_n)=1$ and all the nontrivial monomials of $f$
have the form $c_kz^k=c_kz_0^{k_0}\cdots z_n^{k_n}$, where $c_k\in \C^*$ and
$\sum_i k_iw_i=d$.
We assume that $(X,o)$ has an isolated singularity.

A partial resolution of $X$ can be obtained by a weighted blow up of $o\in X$. This creates an exceptional set  $X_\infty$. Then we can continue the resolution procedure and we construct  a good
resolution $\tX\to X$. Let $\calv$ be the index set of irreducible exceptional sets, as above.
 We denote the strict transform of $X_\infty$ by $E_{\infty}$.
 It is irreducible (see below),  let it be indexed by $v_\infty\in\calv$.
  We wish to show that $\{v_\infty\}\subset \calv$ is a B$_{an}$--set.

  Let us provide more details about the weighted blow up. Consider the weighted projective
  $n$--space $\bP^{n}_{w}$ of weights $(w_0,\ldots, w_n)$ and the hypersurface
  $X_\infty\subset \bP^{n}_{w}$ given
  by  the equation $f(z_0,\ldots, z_n)=0$.
  Since $X$ has an isolated singularity, $X_\infty$ is necessarily irreducible.
Next,  take the incidence variety
  $$I:=\{(v,[u])\in X\times \bP^{n}_{w},\ \ v\in X\setminus \{0\}, \ [v]_w= [u]_w \, \},$$
  where $[u]_w$ denotes the class of $u$ in $\bP^n_w$.
Then its closure with the restriction of the first projection is the needed weighted blow up
with exceptional set $X_\infty$.

The space  $X_\infty$ is a $V$--manifold \cite{SteenW}. This means that its singularities locally
are quotients of type $\C^n/G$, where $G$ is a finite small subgroup of ${\rm GL}(n, \C)$.
Since quotient singularities are Cohen--Macaulay \cite{HR} and rational \cite{Vi}, we have to resolve
in continuation only `mild' singularities. In particular,  we can expect that the only `significant'
irreducible exceptional set is $E_\infty$.

\begin{lemma}\label{lem:bad} If \ $h^{n-1}(\calO_E)=0$ then
$\{v_\infty\}$ is a $B_{an}$ subset of $\calv$.
\end{lemma}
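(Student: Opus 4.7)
The plan is to reduce to a residue computation on the single exceptional divisor $X_\infty$ of the partial resolution. By the characterization of B$_{an}$--sets in Definition \ref{def:DOMAN}, we must show that $h^{n-1}(\calO_Z) = 0$ for every $Z = E_\infty + l^*$ with $l^* \geq 0$ supported on $\calv \setminus \{v_\infty\}$; equivalently, by (\ref{eq:lL}), that every meromorphic form $\omega \in H^0(\tX \setminus E, \Omega^n_{\tX})$ with $(\mathrm{div}_E\omega)|_{v_\infty} \geq -1$ (and arbitrary poles along the remaining $E_v$) lies in $H^0(\tX, \Omega^n_{\tX})$.

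I would work with the factorization $\tX \xrightarrow{\sigma} Y \xrightarrow{\sigma_0} X$, where $\sigma_0$ is the weighted blowup (so $Y$ has only quotient singularities, with $X_\infty$ as the sole exceptional divisor) and $\sigma$ resolves these. The exceptional components $E_v$ for $v \neq v_\infty$ are all $\sigma$--exceptional and therefore map to the singular locus $\mathrm{Sing}(Y) \subset X_\infty$, which has codimension $\geq 2$ in $Y$. Since $Y$ is normal and $\omega_Y(X_\infty)$ is a reflexive sheaf, the form $\omega$ (with its at most simple pole along the smooth part of $X_\infty$) descends to a genuine section of $\omega_Y(X_\infty)$ on all of $Y$. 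Applying the Poincaré residue sequence for the Cartier divisor $X_\infty$ on the $V$--manifold $Y$,
\begin{equation*}
0 \to \omega_Y \to \omega_Y(X_\infty) \to \omega_{X_\infty}^{[1]} \to 0,
\end{equation*}
we obtain a residue $\mathrm{Res}_{X_\infty}(\omega) \in H^0(X_\infty, \omega_{X_\infty}^{[1]})$, and it remains to show this section vanishes.

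For this I would invoke the hypothesis $h^{n-1}(\calO_E)=0$ as follows. Serre duality on the projective $V$--manifold $X_\infty$ gives $H^0(X_\infty, \omega_{X_\infty}^{[1]}) \cong H^{n-1}(X_\infty, \calO_{X_\infty})^*$; since $X_\infty$ has quotient (hence rational) singularities, the natural map $E_\infty \to X_\infty$ induces an isomorphism $H^{n-1}(X_\infty, \calO_{X_\infty}) \cong H^{n-1}(E_\infty, \calO_{E_\infty})$. On the other hand, the surjection $\calO_E \twoheadrightarrow \calO_{E_\infty}$ together with the vanishing $H^n(E, \cdot) = 0$ (for dimension reasons, since $\dim_{\C} E = n-1$) yields $H^{n-1}(E_\infty, \calO_{E_\infty})$ as a quotient of $H^{n-1}(E, \calO_E) = 0$. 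Hence $\mathrm{Res}_{X_\infty}(\omega) = 0$, so $\omega \in H^0(Y, \omega_Y)$. Finally, rationality of the quotient singularities of $Y$ gives $\sigma_*\Omega^n_{\tX} = \omega_Y$, so $\omega \in H^0(\tX, \Omega^n_{\tX})$, completing the verification.

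The main technical obstacle is justifying the ingredients from $V$--manifold theory used in the residue argument, namely Serre duality and the Poincaré residue sequence with the reflexive dualizing sheaf $\omega_{X_\infty}^{[1]}$, together with the identity $\sigma_*\Omega^n_{\tX} = \omega_Y$ for the resolution of quotient singularities. These are well known (Steenbrink \cite{SteenW}, and standard consequences of Grauert--Riemenschneider combined with the Cohen--Macaulay/rational nature of quotient singularities \cite{HR,Vi}), so the substantive work is bookkeeping of the codimension-$\geq 2$ extension and the identification of cohomology groups; the only place where the hypothesis on the link really enters is the last cohomology vanishing step.
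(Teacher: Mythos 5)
Your proof takes a genuinely different route from the paper's. The paper argues entirely through the explicit Merle--Teissier basis $\{z^k\omega_G\}$ of $H^0(\tX\setminus E, \Omega^n_{\tX})/H^0(\tX, \Omega^n_{\tX})$, together with the identification of the $E_\infty$-divisorial filtration with the Newton filtration: a combination $\sum_k c_k z^k\omega_G$ has pole order along $E_\infty$ equal to $\max_k\{d+1-\sum_i(k_i+1)w_i : c_k\neq 0\}$, pole order $1$ forces a lattice point on the Newton boundary, and such points correspond to spectral numbers equal to $1$, which (\ref{eq:spec}) rules out under $h^{n-1}(\calO_E)=0$. You instead bypass the basis and the spectral-number reinterpretation entirely: you push $\omega$ down to the partial resolution $Y$, fit it into a residue sequence $0\to\omega_Y\to\omega_Y(X_\infty)\to\omega_{X_\infty}^{[1]}\to 0$ on the $V$--manifold $Y$, and kill the residue by Serre duality on $X_\infty$ plus the chain $h^{n-1}(\calO_E)=0 \Rightarrow h^{n-1}(\calO_{E_\infty})=0 \Rightarrow h^{n-1}(\calO_{X_\infty})=0$ (the first arrow from the surjection $\calO_E\twoheadrightarrow\calO_{E_\infty}$ and $H^n(E,\cdot)=0$, the second from rationality of $X_\infty$'s quotient singularities), after which $\sigma_*\omega_{\tX}=\omega_Y$ (Kempf's characterization of rational singularities) lifts $\omega$ to $\tX$. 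This is a nice conceptual gain: you never invoke the Gorenstein form, the Newton filtration, or the Hodge spectrum, and you isolate exactly where the hypothesis enters. The tradeoff is that you have to justify the $V$--manifold machinery carefully: that $X_\infty$ need not be Cartier on $Y$, so $\omega_Y(X_\infty)$ should be read as the reflexive hull $j_*\omega_{Y^{sm}}(X_\infty)$ and the exactness of the residue sequence checked via the local $G$--equivariant model $0\to\omega_{\C^n}^G\to(\omega_{\C^n}(D))^G\to\omega_D^G\to 0$; that $\omega_D^G$ really is $\omega_{X_\infty}^{[1]}$ (this uses smallness of $G$, which rules out pseudoreflections in the kernel of $G\to\operatorname{Aut}(D)$); and, for the step "the $E_v$ with $v\neq v_\infty$ map into $\operatorname{Sing}(Y)$", that $\sigma:\tX\to Y$ is taken to be an isomorphism over $Y^{sm}$ (harmless, since Theorem \ref{th:annlattinda} and the Reduction Theorem allow a convenient choice). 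With those points filled in, your argument is correct and arguably cleaner in its use of the hypothesis; note however that the explicit route through spectral numbers is what lets the paper immediately pivot to the Hodge-theoretic description of $\overline{w}_0$ in the rest of \ref{ss:IHS}, so the two proofs serve slightly different purposes in the exposition.
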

\begin{proof}
We need to show that $h^{n-1}(\calO_{E_\infty+l^*})=0$ for any $l^*\geq 0 $ supported on
$\calv^*=\calv\setminus \{v_\infty\}$. By (\ref{eq:lL}) this means that
there  exists no  differential form $\omega\in H^0(\tX\setminus E, \Omega^n_{\tX})$
such that its pole has order one along $E_\infty$.

The point is that we know all the candidate differential forms.
Indeed, since $(X,o)$ is Gorenstein, it admits a Gorenstein form $\omega_G$ (unique up to
a non--zero constant). In fact, it is
the restriction of $dx_0\wedge \cdots \widehat{dx_i}\cdots \wedge dx_n/ (\partial f/\partial x_i)$ to
$X\setminus \{o\}$. Then consider all the monomials of type $z^k$ with $\sum_i(k_i+1)w_i\leq d$.
These correspond to the lattice points $k+(1,\ldots, 1)$ with strictly positive coordinates not above the Newton boundary.
Then the classes of the differential forms $z^k\omega_G$ form a basis of
$H^0(\tX\setminus E,\Omega^n_{\tX})/H^0(\tX, \Omega^n_{\tX})$ \cite{MeTe}.
Note also that the divisorial filtration associated with $E_\infty$ agrees with the combinatorial filtration associated with the Newton diagram \cite{Lem,BaldurFiltr}.
Hence, the pole order of any linear combination
$\sum_kc_kz^k$ $(c_k\in\C$) is $\max\{\mbox{pole order } \ z^k\ :\ c_k\not=0\}$.
Therefore,
 the pole order of any linear combination
$\sum_kc_kz^k\omega_G$ $(c_k\in\C$) is $\max\{\mbox{pole order } \ z^k\omega_k\ :\ c_k\not=0\}$.
Moreover, since  the pole order of $\omega_G$ along $E_\infty$ is
$d+1-\sum_i w_i$ (cf. \cite{MeTe}), the pole order of $z^k\omega_G$ is
$d+1-\sum_i (k_i+1)w_i$. Hence, this is 1 exactly when
$\sum_i(k_i+1)w_i=d$, i.e., if the corresponding lattice point is on the  Newton boundary.
But such lattice points from the Newton boundary
 produce spectral numbers 1 \cite{StOslo,SaitoNN}. Since there exists no spectral number equal to 1 (cf.
\ref{eq:spec})), there exists no such  differential form either.
\end{proof}

\bekezdes In particular, we can apply the Analytic Reduction Theorem for this vertex.
In the next paragraphs we show that the weight function $\overline{w}_0$
of the reduction is determined by the set of
spectral numbers from the interval $(0,1)$ (recall that their number is  exactly $p_g$).

By the Reduction Theorem we have to determine $\hh(\overline{l})$ for $\overline{l}\in
 R(0, \overline{Z_K}\,)=
\Z\cap [0, d+1-\sum_i w_i]$.

Note that both $\calO_{\C^{n+1},0}$ and $\calO_{X,o}$ are graded local algebras, graded by ${\deg}(z^k)=\sum_ik_iw_i$. For $\hh(\overline{l})$ we need  to know $(\calO_{X,o})_{{\rm deg}<d+1-\sum_iw_i}$. Since $\deg(f)=d$, the homogeneous components of
$\calO_{\C^{n+1},0}$ and $\calO_{X,o}$ in degrees ${\rm deg}<d+1-\sum_iw_i$ are the same.
They are determined by the lattice points $k\in\Z_{\geq 0}^{n+1}$ with $\sum_ik_iw_i\leq d-\sum_iw_i$.
These points correspond bijectively to the lattice points $k+(1,\ldots ,1)\in \Z_{>0}^{n+1}$
 with $\sum_i(k_i+1)w_i\leq d$. These  lattice points are those which are not above the Newton boundary. But there are  no lattice points on the boundary, hence these lattice points are all under the boundary, and they
 provide the spectral numbers. Each such lattice point contributed in the spectrum by
 $\alpha=\sum_i (k_i+1)w_i/d$.

 Let $P(t)=\sum _{\ell\geq 0}p(\ell)t^{\ell}$ be the Poincar\'e series of the graded algebra $\calO_{X,o}$, and $P(t)_{<d+1-\sum_iw_i}$ be the Poincar\'e polynomial  counting the dimensions of the homogeneous components of degree $<d+1-\sum_iw_i$. Then $\hh(\overline{l})=\sum_{\ell<\overline{l}}p(\ell)$.

 If $\{\alpha_1,\ldots, \alpha_\mu\}$ are the spectral numbers of $(X,o)$ then we write
 ${\rm Spec}(t)=\sum_j t^{\alpha_j}$. Let ${\rm Spec}_{(0,1)}(t)$ be
 $\sum_{\alpha_j<1}t^{\alpha_j}$. Then it is known that \cite[5.11]{StOslo}
 $${\rm Spec}(t)=\prod _i \ \frac{t^{w_i/d}-t}{1-t^{w_i/d}}.$$
 From above we also have
 $${\rm Spec}_{(0,1)}(t)= \sum_{k_i\geq 0, \ \sum_i (k_i+1)w_i<d}\ t^{\sum_i(k_i+1)w_i/d},
 \ \ \
{\rm Spec}_{(0,1)}(t^d)=t^{\sum_iw_i} \cdot P(t)_{< d+1-\sum_iw_i}.$$
Hence, for any $0\leq \overline {l}\leq d+1-\sum_i w_i$ we have
$$ \hh(\overline{l})= \# \{ \mbox{$\alpha$  spectral number with $\alpha < (\overline{l}+\sum_i w_i)/d$} \},$$
and $\overline {w}_0(\overline{l})=\hh(\overline{l})+\hh(d+1-\sum_iw_i-\overline {l})-p_g$.

\begin{remark}
From the above discussion $\bH^{\geq 1}_{an}(X,o)=0$ and $\RR_{an}(X,o)$ is completely determined
by the Hodge spectrum ${\rm Spec}_{(0,1)}$.
 (Recall that $\RR_{an}(X,o)$ determines  $\bH^0_{an}(X,o)$ as well as its $\bH$.)

On the other hand, from $\RR_{an}(X,o)$ we cannot recover the precise values of the spectral numbers.
Indeed, e.g. for the Brieskorn (normal,  minimally elliptic surface) singularities
of type $(2,3,7)$ and $(2,3,11)$  have the same graded root, but their spectrum in $(0,1)$ are different.
Both have only one spectral number in $(0,1)$, they are $41/42$ and $61/66$ respectively.  On the other hand,
the graded root certainly provides interesting information about the mutual position of the spectral numbers.
\end{remark}
\begin{remark}
It would be very interesting to generalize this Hodge theoretical connection to all
hypersurfaces, and to find other reinterpretations of $\bH^*_{an}$ in terms of other classical analytic invariants (e.g.
the multiplicity).\end{remark}

\subsection{Newton nondegenerate  hypersurface singularities}\label{ss:NNN}

Assume that $f:(\C^{n+1}, 0)\to (\C,o)$ is an isolated hypersurface singularity
which is nondegenerate with respect to a convenient Newton diagram \cite{Kouch,Var}.
We also assume that there is no lattice point on the Newton boundary with all coordinates strictly positive. This is equivalent to the fact that there exists no
spectral number equal to 1 \cite{SaitoNN}.

The normal vectors of the top faces of the Newton diagram determine a dual fan. A regular subdivision of this fan determines a toric resolution $\tX\to X$ of $(X,o)=(\{f=0\},0)$ (even an embedded resolution,
but that one is not needed  here). Then by the toric correspondence, the normal directions of the top faces of the Newton diagram determine exceptional divisors in $\tX$, they are irreducible by the
nondegeneracy assumption.    Let their collection be $\overline{\calv}$. A very same proof as in the
weighted homogeneous case shows  that $\overline{\calv}$ is a B$_{an}$--set. One has again to consider
the lattice points below the diagram, their number is $p_g$, and they index both the spectral numbers
in the interval  $(0,1)$ and also differential forms forming a basis in
$H^0(\tX\setminus E, \Omega^n_{\tX})/H^0(\tX, \Omega^n_{\tX})$,
  \cite{MeTe,SaitoNN}.  The details are left to the reader.

  However, with respect to the complete discussion from \ref{ss:IHS},
  the parallelism with the weighted homogeneous germs breaks at some point:
  in the Newton nondegenerate case the combinatorial Newton filtration of the lattice points
  below the Newton diagram usually does not coincide with the corresponding divisorial  filtration.
  The description of $\bH^*_{an}$ and of $\RR_{an}$ is the subject of a forthcoming paper.

\end{document}